\newcommand*\bigcdot{\mathpalette\bigcdot@{.5}}
\newcommand*\bigcdot@[2]{\mathbin{\vcenter{\hbox{\scalebox{#2}{$\m@th#1\bullet$}}}}}
\newtheorem{theorem}{Theorem}[section]
   \newtheorem{lemma}[theorem]{Lemma}
   \newtheorem{proposition}[theorem]{Proposition}
   \newtheorem{example}[theorem]{Example}
   \newtheorem{definition}[theorem]{Definition}
   \newtheorem{corollary}[theorem]{Corollary}
   \newtheorem{remark}[theorem]{Remark}
\newtheorem{com}[theorem]{Comments}
    \newcommand \is{\bigcdot}
    \newcommand \id{1\!\!1}
    \newcommand\graph[1]{[\! [ #1]\! ]}
    \newcommand\cro[1]{\langle #1\rangle}
    \newcommand\sgn[1]{\textrm{sgn}(#1)}
    \newcommand\wh{\widehat}
    \newcommand\wt{\widetilde}
    \newcommand\F{{\mathcal F}}
    \newcommand\G{{\mathcal G}}
    \newcommand\ff{{\mathbb F}}
    \renewcommand\gg{{\mathbb G}}
    \renewcommand\P{\mathbb P}
    \newcommand\Q{\mathbb Q}
    \newcommand\R{\mathbb R}
    \newcommand \E {{\mathbb E}}
    \newcommand{\Rbrack}{[\![}
    \newcommand{\Lbrack}{]\!]}
    \newcommand\ooo[1]{\!\!\!\phantom{I}^o #1}
\newcommand\p[1]{\!\!\!\phantom{I}^p #1}
\newcommand\oo[1]{#1^o}
\newcommand\pp[1]{#1^p}
\begin{document}


\title[Thin times and random times' decomposition]{Thin times and random times' decomposition}

\author[Anna Aksamit, Tahir Choulli and Monique Jeanblanc]{Anna Aksamit, Tahir Choulli and Monique Jeanblanc}

\address{Anna Aksamit\\
School of Mathematics and Statistics\\
University of Sydney\\
Carslaw F07\\
2006, Sydney\\
Australia}
\email{anna.aksamit@sydney.edu.au}

\address{Tahir Choulli\\
Mathematical and Statistical Sciences Department\\
University of Alberta\\
632 Central Academic Building\\
Edmonton AB T6G 2G1\\
Canada}
\email{tchoulli@ualberta.ca}

\address{Monique Jeanblanc\\
Laboratoire de Math\'ematiques et Mod\'elisation d'\'Evry (LaMME), UMR CNRS 8071\\
Universit\'e d'\'Evry-Val-d'Essonne\\
23 Boulevard de France\\
91037 \'Evry cedex\\
France}
\email{monique.jeanblanc@univ-evry.fr}

\thanks{AA and MJ wish to acknowledge the generous financial supports of {\it 'Chaire Markets in transition'},
 French Banking Federation  and ILB, Labex
ANR 11-LABX-0019.   AA wishes to acknowledge the support of the
European Research Council under the European Union's Seventh
Framework Programme (FP7/2007-2013) / ERC grant agreement no.
335421.
The research of TC is supported by the Natural Sciences and Engineering
Research Council of Canada (NSERC), through Grant RGPIN 04987.}

\keywords{thin times, graph of a random time, avoidance of stopping time, dual optional projection, progressive enlargement of filtration, hypothesis $(\mathcal H^\prime)$, honest times, thin-thick decomposition, immersion}

\date{\today}

\maketitle


\begin{abstract}
The paper studies thin  times which are
random times whose graph is contained in a countable union of the
graphs of stopping times with respect to a reference filtration
$\ff$.  We show that a generic random time can be decomposed
into thin and thick parts, where the second is a random time
avoiding all $\ff$-stopping times.
Then, for a given random time $\tau$, we introduce $\ff^\tau$, the
smallest right-continuous filtration containing $\ff$ and making
$\tau$ a stopping time, and we show that, for a thin time $\tau$,
each $\ff$-martingale is an $\ff^\tau$-semimartingale, i.e., the
hypothesis $({\mathcal H}^\prime)$ for $(\ff, \ff^\tau)$ holds. We
present applications to honest times, which can be seen as last
passage times, showing classes of filtrations which can only
support thin honest times, or can accommodate thick honest times as
well.
\end{abstract}


\section{Introduction}

The paper studies the class of thin times in an enlargement
of filtration framework. The concept naturally fits, and
complements, the studies of random times and progressive
enlargement of filtrations. A random time defined on a filtered
probability space $(\Omega,\mathcal G,\mathbb F,\mathbb P)$ with
$\ff=(\F_t)_{t\geq 0}$, is  a random variable with values in
$[0, \infty]$. In the literature of enlargement on filtration, e.g. Mansuy and Yor \cite{mansuyyor} and Nikeghbali \cite{ni:eg},  it
is common to assume that the random time $\tau$ avoids all
$\ff$-stopping times, i.e., $\P(\tau=T<\infty)=0$ for any
$\ff$-stopping time $T$. The motivation
behind our work is to explore what happens if this condition
fails.
{In Definition \ref{thin_defi} we introduce
\emph{thin times} which satisfy the   opposite property, i.e.,
their graph is contained in a countable union of graphs of $\ff$-stopping times. 
The given name is motivated by the fact that the graph of a thin
random time is contained in a thin set (see \cite[Chapter I, Definition
1.30]{JSlimit} for definition and main properties of thin sets). 
The notion of
thin time was mentioned, but not developed,
for the first time in Dellacherie and Meyer \cite{arlequine} under the name \emph{arlequine random variable} 
referring to the costume of the Harlequin which is made of patches of different colors.
On the other hand we also work with \emph{thick times} which are introduced in Definition \ref{thick_defi}}, and satisfy the above avoidance condition, i.e.,
 and the graph of a thick
random time does not intersect any thin set (i.e., the intersection
is an evanescent set).
In Section \ref{s:thin}, we show the first results on thin times. 
Our study strongly relies on
the notion of dual optional projection and other processes linked
to the general theory of stochastic processes, in particular, to
the enlargement of filtration theory.

Since their introduction in the 1980's, enlargements of
filtrations have remained an important tool and field of study in
the theory of stochastic processes. In fact the theory has seen
its second youth recently with revised interest sparked by
applications in mathematical finance. These include,
in particular, credit risk and modelling of asymmetry of information, where one considers a financial market where different agents have different levels of information.

Enlargement of filtration theory, to which we contribute here,
focuses on the properties of stochastic processes under a change
of filtration. The behaviour of (semi)martingales under a suitable
change of filtration may be seen as parallel to absolutely
continuous change of measure and Girsanov's theorem (see
\cite{jacod1985grossissement,song:th,yoeurp}). 
It is of a fundamental interest to provide new classes of enlargements under which the semimartingale property is stable.

Thin times form a new class of random times which possesses this property under progressive enlargement.
Recall that for a random time $\tau$,
$\ff^\tau:=(\F^{\tau}_t)_{t\geq 0}$ denotes the filtration $\ff$
progressively enlarged with $\tau$, and is given by
\begin{equation}
\label{Ftau}
\F^{\tau}_t:=\bigcap_{s>t}\left ( \F_s \lor \sigma(\tau\land s) \right)\quad \textrm{for any $t\geq 0$}.
\end{equation}
The fundamental question in the enlargement of filtration theory
is if all $\ff$-martingales remain 
$\ff^\tau$-semi\-martin\-gales. If the latter property is
satisfied we say that the hypothesis $({\mathcal H}^\prime)$ holds
for $(\ff, \ff^\tau)$, in
which case we are inte\-rested in the 
$\ff^\tau$-semimartingale decomposition of $\ff$-martingales. The
main result in Section \ref{s:thinH} is Theorem \ref{decomposition1}
where we establish the hypothesis $({\mathcal H}^\prime)$ for thin
random times and give a corresponding semimartingale
decomposition. It extends a previous result by Jeulin \cite[Lemma
(4,11)]{j}  which deals with countably valued random times. Instead we choose countably many $\ff$-stopping times which are already
captured in the reference filtration $\ff$.
Jeanblanc and Le Cam \cite{jeanblanc2009progressive}  have established  that the hypothesis $({\mathcal H}^\prime)$ holds for a   progressive enlargement by an initial time, i.e., a random time which satisfies absolute continuity hypothesis  introduced by Jacod \cite{jacod1985grossissement} (see also \cite[Theorem 3,2]{j} and
\cite{meyer1978theoreme}).
Conceptually, we may see Theorem \ref{decomposition1} as a different way of adopting results of Jacod to progressive setting and obtaining qualitatively different results.


In Section \ref{s:decomp} we define the
decom\-posi\-tion of a random time into thick and thin parts which
we call the thin-thick decomposition.
The thin-thick decomposition is
congruent with the decomposition of a stopping time into accessible and totally
inaccessible parts.
One of the main results in this section, Theorem \ref{A0decom}, says that any random time $\tau$
admits a unique thin-thick decomposition and characterizes its thin and thick
components in terms of the dual optional projection of the
indicator process $\id_{\Rbrack \tau, \infty\Rbrack}$.
In Section \ref{s:decomp} we also show the significance of thin-thick decomposition for the hypothesis $(\mathcal H^\prime)$ and immersion in the context of the progressive
enlargement of filtration.

In Section \ref{s:honest} we turn to honest times which constitute an important and well studied class of random times (see Barlow \cite{barlow78} and Jeulin \cite{j}) and can be suitably represented as last passage times.
Adopting the notion of jumping filtration from Jacod and Skorokhod \cite{jac} we show {in Theorem \ref{jump_honest}, which is the main result of this section,} that such a filtration can only support honest times which are thin.
That includes the compound Poisson process filtration.
In \cite{jac} the link between jumping filtration and finite variation martingales is established; further developments related to purely discontinuous martingale filtrations are presented in Hannig \cite{hannig2003filtrations}.
In Theorem \ref{jump_honest} we also show that there exists a thick time in the filtration which can accommodate a non-constant continuous martingale.
In Section \ref{s:honest} we also discuss two examples of thin honest times: the last passage time at a barrier $a$ of a compound Poisson process and an example based on an approximation of a Brownian local time.

\section{A definition and some properties of thin times}
\label{s:thin}

Let $(\Omega,\mathcal G,\mathbb F,\mathbb P)$ be a filtered
probability space, where $\mathbb{F}:=(\F_t)_{t\geq 0}$ denotes a
filtration satisfying the usual conditions, and such that
$\F_\infty:=\bigcup_{t>0}\F_t \subset \mathcal G$.
For any
c\`adl\`ag process $X$ we   denote by $X_-$ the left-continuous
version of $X$, by $\Delta X$ the jump of $X$ and by $X_\infty$
the limit $\lim_{t\to \infty} X_t$ if it exists. The process $X$
is said to be increasing if, for almost all $\omega$, it satisfies
$X_t(\omega)\geq X_s(\omega)$ for all $t\geq s$. A random variable
is said to be positive if it has values in $[0, \infty)$. We
denote by $G\is X$ the stochastic integral of a predictable
process $G$ w.r.t. a semimartingale $X$, when this integral is well defined.

Consider a random time $\tau$, i.e., a $[0, \infty]$-valued
$\G$-measurable random variable. Note that a random time $\tau$ is
not necessarily $\F_\infty$-measurable. For a random time $\tau$,
we denote by $\graph{\tau}:=\{(\omega, t)\subset \Omega\times
\R^+: \tau(\omega)=t\}$ its graph. Let us recall, following
\cite{j}, some useful processes associated with  the pair
$(\ff,\tau)$. For the process
$A:=\id_{\Rbrack\tau,\infty\Rbrack}$, we denote by $A^p$ its
$\ff$-dual predictable projection and by $A^o$ its $\ff$-dual
optional projection (see Appendix \ref{projections}). By  an abuse
of language, $A^o$ is also called the dual optional projection of
the random time $\tau$. We also define two $\ff$-supermartingales
$Z$ and $\wt Z$ as the optional projections of processes $1-A$
and $1-A_-$ respectively, i.e.,
\begin{equation*}
Z_t:=\; ^o \Big [\id_{\Rbrack 0, \tau\Rbrack}\Big ]_t=\P(\tau>t|\F_t)\quad \textrm{and}\quad
\wt Z_t:=\; ^o \Big [\id_{\Rbrack 0, \tau\Lbrack}\Big ]_t=\P(\tau\geq t|\F_t).
\end{equation*}
Since the dual optional projection $A^o$ will play a crucial role
in the paper, we recall two equalities where it appears (see
\cite[Chapitre IV, section 1]{j}):
\begin{equation}
\label{martm}
A^o=m-Z \quad \textrm{and} \quad \Delta A^o=\wt Z-Z \, ,
\end{equation}
 where $m$ is a BMO $\ff$-martingale. Furthermore,
  $\wt Z=Z_-+\Delta m$.

 The following definition
contains the leading idea of the paper. It introduces a class of random times using a criterion based on $\ff$-stopping
times w.r.t. a reference filtration.

\begin{definition}
\label{thin_defi}
A random time $\tau$ is called
an $\ff$-thin time if its graph $\graph {\tau}$ is contained
in an $\ff$-thin set, i.e., if there exists a sequence of $\ff$-stopping
times $(T_n)_{n=1}^\infty$ with disjoint graphs such that $\graph
{\tau}\subset\bigcup_{n=1}^\infty\graph {T_n}$.

Let $T_0:=\infty$. We say that the sequence
$(T_n)_{n\geq 0}$ exhausts the $\ff$-thin time $\tau$ or that $(T_n)_{n\geq 0}$
is an $\ff$-exhausting sequence of the $\ff$-thin time $\tau$.

We say that the family of sets $(C_n)_{n\geq 0}$, given by $C_0:=\{\tau=\infty\}$ and $C_n:=\{\tau=T_n<\infty\}$ for $n\geq 1$,
is an $\ff$-partition of the $\ff$-thin time $\tau$.

We say that the family of bounded c\`adl\`ag $\ff$-martingales $(z^n)_{n\geq 0}$ given by its terminal values $\P(C_n|\F_\infty)$, namely
$z^n_t:=\P(C_n|\F_t)$, is a martingale family of the thin time $\tau$.
\end{definition}

If this is clear from the context we shall simply say that $\tau$ is a thin time instead of saying that $\tau$ is an $\ff$-thin time etc.
Note that a thin time $\tau$ is built from
$\ff$-stopping times, i.e., $\tau=\sum_{n\geq 0} T_n\id_{C_n}$ where $(T_n)_{n\geq 0}$ is one exhausting sequence and $(C_n)_{n\geq 0}$ is its partition.
On the other hand, given a sequence $(T_n)_{n\geq 0}$ of $\ff$-stopping times with disjoint graphs such that $T_0=\infty$ and a partition $(C_n)_{n\geq 0}$ of $\Omega$, the random time $\tau$ defined as $\tau:=\infty\id_{C_0}+\sum_{n\geq 1} T_n\id_{C_n}$ is thin.

Let us also remark that an exhausting sequence $(T_n)_{n\geq 0}$ of a thin time is not unique, however the properties of a thin time do not depend on the specific choice of an exhausting sequence.
The following proposition combines two exhausting sequence a given thin time.
\begin{proposition}
Let $\tau$ be a thin time with an exhausting sequence $(T_n)_{n\geq 0}$ and a partition $(C_n)_{n\geq 0}$. Suppose that $(S_n)_{n\geq 0}$ and $(B_n)_{n\geq 0}$ are as well an exhausting sequence and a partition of $\tau$.
Then, $\left (U_0, (U_{n,m})_{n\geq 1, m\geq 1}\right)$, defined as $U_0:=\infty$ and  $U_{n,m}:=T_n\id_{\{T_n=S_m\}}+\infty\id_{\{T_n\neq S_m\}}$ for $n\geq 1$ and $m\geq 1$, is an exhausting sequence of $\tau$ and $\left (D_0,(D_{n,m})_{n\geq 1, m\geq 1}\right)$ defined as $D_0:=\{\tau=\infty\}$ and $D_{n,m}:=C_n\cap B_m$ for $n\geq 1$ and $m\geq 1$, is the corresponding partition of $\tau$.
\end{proposition}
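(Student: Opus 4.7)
The plan is to verify four things in turn: that each $U_{n,m}$ is an $\ff$-stopping time, that the family $\{U_0\} \cup \{U_{n,m} : n,m \geq 1\}$ has pairwise disjoint graphs, that $\graph{\tau}$ is contained in the union of these graphs, and that $(D_0, (D_{n,m}))$ is a partition of $\Omega$ with $D_{n,m} = \{\tau = U_{n,m} < \infty\}$.

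First, for the stopping time property, I would recall the standard fact that for any two $\ff$-stopping times $T_n$ and $S_m$, the event $\{T_n = S_m\}$ lies in $\F_{T_n \wedge S_m}$ (as the intersection of $\{T_n \leq S_m\}$ and $\{S_m \leq T_n\}$), hence in $\F_{T_n}$. Since $U_{n,m}$ is the restriction of $T_n$ to a set in $\F_{T_n}$ (in the sense $U_{n,m} = T_n$ on $\{T_n = S_m\}$ and $U_{n,m} = \infty$ elsewhere), it is an $\ff$-stopping time. The case $U_0 = \infty$ is trivial.

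Next, for the disjointness of graphs, suppose $U_{n,m}(\omega) = U_{n',m'}(\omega) < \infty$ for $(n,m) \neq (n',m')$. Then on this event $T_n(\omega) = S_m(\omega)$ and $T_{n'}(\omega) = S_{m'}(\omega)$, and both agree with the common finite value. The disjointness of $(\graph{T_k})_{k\geq 1}$ forces $n = n'$, and then $S_m(\omega) = T_n(\omega) = T_{n'}(\omega) = S_{m'}(\omega) < \infty$, so the disjointness of $(\graph{S_k})_{k\geq 1}$ forces $m = m'$, a contradiction. For the containment $\graph{\tau} \subset \bigcup_{n,m\geq 1} \graph{U_{n,m}}$, if $\tau(\omega) < \infty$ then there is a unique $n \geq 1$ with $\omega \in C_n$ and a unique $m \geq 1$ with $\omega \in B_m$, so $\tau(\omega) = T_n(\omega) = S_m(\omega)$, whence $U_{n,m}(\omega) = T_n(\omega) = \tau(\omega)$.

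Finally, for the partition claim, the sets $D_{n,m} = C_n \cap B_m$ with $n,m \geq 1$ are pairwise disjoint since $(C_n)$ and $(B_m)$ are partitions of $\Omega$, and their union equals $\{\tau < \infty\} \cap \{\tau < \infty\} = \{\tau < \infty\}$, which is the complement of $D_0 = \{\tau = \infty\}$. The identification $D_{n,m} = \{\tau = U_{n,m} < \infty\}$ follows because on $C_n \cap B_m$ one has $T_n = \tau = S_m$, forcing $U_{n,m} = T_n = \tau$; conversely, if $\tau = U_{n,m} < \infty$ then $U_{n,m} = T_n = S_m$ gives $\omega \in C_n \cap B_m$. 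The only mildly delicate step is the first one, establishing that the restriction of $T_n$ to $\{T_n = S_m\}$ remains a stopping time, but this is a routine application of the measurability of $\{T_n = S_m\}$ with respect to $\F_{T_n \wedge S_m}$.
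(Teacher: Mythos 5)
Your proof is correct and follows essentially the same route as the paper's: the stopping-time property via $\{T_n=S_m\}\in\F_{T_n\wedge S_m}$, disjointness of the graphs of the $U_{n,m}$ deduced from the disjointness of the $T$-graphs and the $S$-graphs (the paper phrases this via graph inclusions and evanescence, you argue pointwise outside a null set, which amounts to the same thing), and the identification of $\graph{\tau}$ and of the partition $D_{n,m}=C_n\cap B_m$. Your write-up is slightly more detailed on the partition verification, but there is no substantive difference.
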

\begin{proof}
Firstly note that $U_{n,m}$ is a stopping time for any pair $n\geq 1$ and $m\geq 1$ since $\{T_n=S_m\}\in\F_{T_n\land S_m}$.
Secondly note that the following identity holds:
$$\tau
=\infty\id_{\{\tau=\infty\}}+\sum_{n\geq 1}\sum_{m\geq 1} T_n\id_{\{\tau=T_n=S_m<\infty\}}=\infty \id_{D_{0}}+\sum_{n\geq 1}\sum_{m\geq 1} U_{n,m}\id_{\{\tau=U_{n,m}<\infty\}}.$$
Hence it remains to show that $(U_{n,m})_{n\geq 1, m\geq 1}$ have disjoint graphs which follows by observing that the sets
$$\graph{U_{n,m}}\cap\graph{U_{k,l}}\subset\graph{T_{n}}\cap\graph{T_{k}}\quad \textrm{and}\quad
\graph{U_{n,m}}\cap\graph{U_{k,l}}\subset\graph{S_{m}}\cap\graph{S_{l}}$$
are evanescent if $n\neq k$ or $m\neq l$.
\end{proof}

Thin times, unlike other classes of random times, possess many stability properties as described in the following remark.

\begin{remark}
\label{r:stability}
(a) Let $\Q$ be absolutely continuous w.r.t. $\P$ and $\wt \ff$ be the filtration $\ff$ completed with $\Q$-null sets.
Then an $\ff$-thin time is an $\wt\ff$-thin time since $\ff\subset \wt\ff$. In other words thin times are invariant w.r.t. an absolutely continuous change of measure.\\
(b) Let $\gg$ be such that $\ff\subset \gg$. Then any $\ff$-thin time is a $\gg$-thin time since any $\ff$-stopping time is a $\gg$-stopping time. In other words, thin times are stable under filtration enlargement.\\
(c) Let $\tau$ and $\sigma$ be two $\ff$-thin times with exhausting sequences $(T_n)_{n\geq 0}$ and $(S_n)_{n\geq 0}$ respectively. Then $\tau\land\sigma$ and $\tau\lor \sigma$ are also $\ff$-thin times since
$$\graph{\tau\land\sigma}\subset \bigcup_{n\geq 1}\graph{T_n}\cup \bigcup_{n\geq 1}\graph{S_n}\quad \textrm{and}\quad \graph{\tau\lor\sigma}\subset \bigcup_{n\geq 1}\graph{T_n}\cup \bigcup_{n\geq 1}\graph{S_n}.$$
\end{remark}

The following theorem provides a useful characterization of thin time based on its $\ff$-dual optional projection.
\begin{theorem}
\label{A0jump}
A random time is a thin time if and only if its dual optional projection is a pure jump process.
\end{theorem}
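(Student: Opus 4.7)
I would prove both implications via the characterisation of the dual optional projection by its defining integration-by-parts property together with the formula for the dual optional projection of $H\id_{\Rbrack T,\infty\Rbrack}$.

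For the ``only if'' direction, start from an exhausting sequence $(T_n)_{n\ge 0}$ with partition $(C_n)_{n\ge 0}$ and martingale family $(z^n)_{n\ge 0}$ as in Definition \ref{thin_defi}. Writing $\tau=\sum_{n\ge 1} T_n\id_{C_n}+\infty\id_{C_0}$ we obtain $A=\id_{\Rbrack\tau,\infty\Rbrack}=\sum_{n\ge 1}\id_{C_n}\id_{\Rbrack T_n,\infty\Rbrack}$. The key subroutine is then the identity
\[
\left(H\id_{\Rbrack T,\infty\Rbrack}\right)^o = M^H_T\,\id_{\Rbrack T,\infty\Rbrack}
\]
for a positive $\G$-measurable $H$ and an $\ff$-stopping time $T$, where $M^H$ is the c\`adl\`ag martingale with terminal value $H$. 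This is verified by the defining relation: for any bounded optional $U$, $\E[HU_T\id_{\{T<\infty\}}]=\E[M^H_TU_T\id_{\{T<\infty\}}]$ using $\F_T$-measurability of $U_T$ and the optional sampling theorem, and the right-hand side coincides with $\E[\int U_sd(M^H_T\id_{\Rbrack T,\infty\Rbrack})_s]$. Applied to $H=\id_{C_n}$ and $T=T_n$, linearity (and monotone convergence of the increasing truncations $\sum_{n\le N}$) yields
\[
A^o=\sum_{n\ge 1}z^n_{T_n}\id_{\Rbrack T_n,\infty\Rbrack},
\]
which is manifestly a pure jump process, with jumps concentrated on $\bigcup_{n\ge 1}\graph{T_n}$.

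For the ``if'' direction, assume $A^o$ is a pure jump process. Since $A^o$ is an optional increasing process whose jumps are summable (boundedness of $A$ gives $\E[A^o_\infty]=\E[A_\infty]\le 1$), the optional set $\{\Delta A^o\neq 0\}$ is thin, so by the standard exhaustion of jumps of an optional process there exists a sequence $(T_n)_{n\ge 1}$ of $\ff$-stopping times with disjoint graphs such that $\{\Delta A^o\neq 0\}\subset\bigcup_{n\ge 1}\graph{T_n}$. Set $B:=(\Omega\times\R^+)\setminus\bigcup_{n\ge 1}\graph{T_n}$, which is $\ff$-optional. Since $A^o$ is pure jump and carries no mass on $B$, $\int_0^\infty\id_B\, dA^o=0$ a.s., so by the defining property of the dual optional projection,
\[
0=\E\!\left[\int_0^\infty\id_B\,dA^o\right]=\E\!\left[\int_0^\infty\id_B\,dA\right]=\P\!\left(\tau<\infty,\;(\tau,\omega)\notin\bigcup_{n\ge 1}\graph{T_n}\right).
\]
Hence $\graph{\tau}\subset\bigcup_{n\ge 1}\graph{T_n}$ up to evanescence, and setting $T_0:=\infty$ produces an exhausting sequence, so $\tau$ is thin.

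The main technical point to watch is the formula for the dual optional projection of $H\id_{\Rbrack T,\infty\Rbrack}$ when $H$ is only $\G$-measurable rather than $\F_\infty$-measurable; this is handled by reducing to $\E[H\mid\F_T]$ via the tower property before applying optional sampling. Everything else is a clean bookkeeping argument using the defining duality between $A$ and $A^o$ against optional test processes.
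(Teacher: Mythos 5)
Your proof is correct, and both directions rest on the same two ingredients the paper uses: the defining duality $\E[\int_{[0,\infty)} U_s\,dA_s]=\E[\int_{[0,\infty)} U_s\,dA^o_s]$ for optional $U$, and the exhaustion of the thin optional set $\{\Delta A^o>0\}$ by stopping times with disjoint graphs. The mechanics differ, though. The paper settles both implications at once by pure mass accounting: for any sequence $(T_n)$ of stopping times with disjoint graphs one has $\sum_n\P(\tau=T_n<\infty)=\sum_n\E\bigl[\Delta A^o_{T_n}\id_{\{T_n<\infty\}}\bigr]$, while $\E[A^o_\infty]=\P(\tau<\infty)$; since $A^o$ is increasing, the sequence exhausts $\graph{\tau}$ exactly when its jumps exhaust the total mass of $A^o$, i.e.\ exactly when $A^o$ is pure jump along $(T_n)$. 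Your forward direction instead computes $A^o=\sum_{n\geq 1} z^n_{T_n}\id_{\Rbrack T_n,\infty\Rbrack}$ explicitly via the projection formula for $H\id_{\Rbrack T,\infty\Rbrack}$ with $H$ only $\G$-measurable (correctly reduced to $\E[H\,\vert\,\F_T]$); this is precisely Proposition \ref{positive}(c) of the paper, established there by the same test-process computation, so you prove more than the theorem requires at essentially no extra cost. Your backward direction, testing against $\id_B$ for $B$ the complement of the union of the graphs, is a clean way of making explicit the step the paper compresses into ``in other words'', namely why a pure-jump $A^o$ forces $\P\bigl(\tau<\infty,\ (\omega,\tau(\omega))\notin\bigcup_n\graph{T_n}\bigr)=0$. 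Both arguments are sound; yours is longer but leaves nothing implicit, and the interchange of the dual projection with the countable sum is adequately justified by monotone convergence.
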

\begin{proof}
For  any sequence $(S_n)_{n\geq 1}$  of $\ff$-stopping times with
disjoint graphs, we have
$$\sum_{n=1}^\infty\P(\tau=S_n<\infty)=\sum_{n=1}^\infty \E\left [\Delta A^o_{S_n}\id_{\{S_n<\infty\}}\right].$$
Since  by definition of the dual optional projection
$\E[A^o_\infty]=\P(\tau<\infty)$, and using the fact that $A^o$ is
an increasing process, we conclude that the sequence $(T_n)_{n\geq 0}$ with $T_0=\infty$ is
an exhausting sequence of  $\tau$, i.e.,  satisfies the condition
$\sum_{n=1}^\infty\P(\tau=T_n<\infty)=\P(\tau<\infty)$, if and only if it
satisfies the condition $\E[A^o_\infty]=\sum_{n=1}^\infty \E[\Delta A^o_{T_n}\id_{\{T_n<\infty\}}]$. In other words, $\tau$ is a thin time if and only if $A^o$ is a pure jump process.
\end{proof}

The next result gives  the supermartingales $Z$ and $\wt Z$ of a
thin time and their decompositions into an $\ff$-martingale
$m$ and  the increasing process $A^o$ in
terms of an exhausting sequence and martingale family of $\tau$.
This will be useful to check certain properties of thin (honest) times (we refer the reader to Sections \ref{s:thinimm} and \ref{s:honest}).

\begin{proposition}
\label{positive}
Let $\tau$ be a thin time with exhausting sequence $(T_n)_{n\geq 0}$, partition $(C_n)_{n\geq 0}$ and martingale family $(z^n)_{n\geq 0}$.
Then:
\begin{enumerate}
\item[(a)] $z^n>0$ and $z^n_->0$ a.s. on $C_n$ for each $n\geq 0$,
\item[(b)] $1-Z_\tau>0$ a.s. on $\{\tau<\infty\}$,
\item[(c)] $\wt Z_t=\sum_{n=0}^\infty \id_{\{t\leq T_n\}}z^n_t,$ $Z_t=\sum_{n=0}^\infty \id_{\{t<T_n\}}z^n_t,$
$A^o_t=\sum_{n=1}^\infty \id_{\{t\geq T_n\}}z^n_{T_n}$ and
$m_t=\sum_{n=0}^\infty z^n_{t\land T_n}.$
\end{enumerate}
\end{proposition}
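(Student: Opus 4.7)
The plan is to prove the three parts in the order (c), (a), (b), since the explicit formulas from (c) make the argument for (b) immediate.

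For (c), I would start from $\tau=\sum_{n\geq 0}T_n\id_{C_n}$, which gives $\id_{\{\tau\geq t\}}=\sum_{n\geq 0}\id_{C_n}\id_{\{T_n\geq t\}}$ and analogously $\id_{\{\tau>t\}}=\sum_{n\geq 0}\id_{C_n}\id_{\{T_n>t\}}$. Since each $T_n$ is an $\ff$-stopping time and $\ff$ satisfies the usual conditions, the events $\{T_n\geq t\}$ and $\{T_n>t\}$ lie in $\F_t$, so pulling them out of the conditional expectation and exchanging the sum with $\P(\cdot\mid\F_t)$ by monotone convergence delivers
$$\wt Z_t=\sum_{n\geq 0}\id_{\{t\leq T_n\}}z^n_t,\qquad Z_t=\sum_{n\geq 0}\id_{\{t<T_n\}}z^n_t.$$
Subtracting and using $\Delta A^o=\wt Z-Z$ from~\eqref{martm} gives $\Delta A^o_t=\sum_{n\geq 0}\id_{\{T_n=t\}}z^n_t$. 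Since $\tau$ is thin, Theorem~\ref{A0jump} tells us $A^o$ is a pure jump process, and summing its jumps yields $A^o_t=\sum_{n\geq 1}\id_{\{T_n\leq t\}}z^n_{T_n}$ (the $n=0$ term drops since $T_0=\infty$). Finally $m=A^o+Z$ together with the identity $z^n_{t\land T_n}=\id_{\{T_n\leq t\}}z^n_{T_n}+\id_{\{t<T_n\}}z^n_t$ recombines the two sums into $m_t=\sum_{n\geq 0}z^n_{t\land T_n}$.

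For (a), I would invoke the classical fact that a non-negative c\`adl\`ag martingale stays at $0$ once it or its left limit vanishes. Setting $R^n:=\inf\{t:z^n_t=0\textrm{ or }z^n_{t-}=0\}$, this gives $z^n_\infty=0$ on $\{R^n<\infty\}$; since $z^n$ is closed by $z^n_\infty=\P(C_n\mid\F_\infty)$,
$$\P\bigl(C_n\cap\{z^n_\infty=0\}\bigr)=\E\bigl[\id_{\{z^n_\infty=0\}}\,z^n_\infty\bigr]=0,$$
so $R^n=\infty$ a.s.\ on $C_n$, which is exactly (a).

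For (b), combining (a) and (c) and using $\sum_{n\geq 0}z^n_t=\P(\Omega\mid\F_t)=1$, on $C_n$ with $n\geq 1$ (so $\tau=T_n<\infty$) one has
$$1-Z_\tau=1-\sum_{k\geq 0}\id_{\{T_n<T_k\}}z^k_{T_n}=\sum_{k\geq 0}\id_{\{T_k\leq T_n\}}z^k_{T_n}\geq z^n_{T_n},$$
and the right-hand side is strictly positive a.s.\ by (a); taking the union over $n\geq 1$ gives (b). The only real subtlety lies in (c): justifying the interchange of the countable sum and the conditional expectation (monotone convergence, since the $z^n$ are non-negative with $\sum_n z^n_t=1$) and appealing to Theorem~\ref{A0jump} to reconstruct $A^o$ from its jumps. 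Once (c) is established, (a) is a standard martingale argument and (b) is an immediate corollary.
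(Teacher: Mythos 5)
Your proof is correct and, for parts (a) and (b), follows essentially the same route as the paper: the same vanishing-time argument $R^n$ with $\E[\id_{\{z^n_\infty=0\}}z^n_\infty]=0$ for (a), and the same inequality $1-Z_{T_n}\geq z^n_{T_n}$ on $C_n$ for (b) (you obtain it from the explicit sum formula and $\sum_k z^k_t=1$, the paper directly from $\P(\tau\leq T_n|\F_{T_n})\geq\P(\tau=T_n|\F_{T_n})$; these are the same estimate). The one genuine divergence is the formula for $A^o$ in (c): the paper computes it directly from the defining duality of the dual optional projection, writing $\E[\int X_s\,dA^o_s]=\E[X_\tau\id_{\{\tau<\infty\}}]=\sum_n\E[X_{T_n}z^n_{T_n}\id_{\{T_n<\infty\}}]$ for optional $X$, whereas you first derive $\Delta A^o=\wt Z-Z=\sum_n\id_{\graph{T_n}}z^n$ and then invoke Theorem \ref{A0jump} to reconstruct $A^o$ as the sum of its jumps. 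Your route is legitimate (Theorem \ref{A0jump} precedes the proposition and does not depend on it, so there is no circularity) and has the advantage of making the pure-jump structure do the work; the paper's duality computation is slightly more self-contained in that it does not need Theorem \ref{A0jump} at all and reproves, in passing, that $A^o$ charges only $\bigcup_n\graph{T_n}$. Both yield the same formula, and your recombination $m=A^o+Z$ matches the paper's appeal to \eqref{martm}.
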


\begin{proof}
(a) Define, for any $n\geq 0$, the $\ff$-stopping time
\begin{equation}
\label{Rn}
R^n:=\inf\{t\geq 0 : z^n_t=0\}.
\end{equation}
As $z^n$ is a positive c\`adl\`ag martingale, by \cite[Proposition (3.4) p.70]{revuz}, it vanishes on $\Rbrack R^n,\infty\Rbrack$.
Since $z^n$ is bounded, $z^n_\infty$ exists and:
$$\{R^n<\infty\}=\left\{\inf_{t\geq 0} z^n_t =0\right\}=\{z^n_\infty=0\}.$$
Moreover, the equality
$0=\E[z^n_\infty\id_{\{z^n_\infty=0\}}]=\E[\id_{C_n}\id_{\{z^n_\infty=0\}}]
$ implies that
$C_n\cap \{z^n_\infty=0\}$ is a null set, so as well $C_n \cap \{\inf_t z^n_t=0\}$ is a null set.
We obtain that $z^n>0$ and $z^n_->0$ a.s. on $C_n$.\\
(b) We have $Z_\tau\id_{\{\tau<\infty\}}=\sum_{n=1}^\infty \id_{C_n}Z_{T_n}$ and, on $\{T_n<\infty\}$, we have
\begin{align*}
1-Z_{T_n}&=\P(\tau \leq T_n|\F_{T_n})\geq
\P(\tau=T_n|\F_{T_n})=z^n_{T_n}.
\end{align*}
From part (a), this implies that $1-Z_\tau>0$ a.s. on $\{\tau<\infty\}$.\\
(c)
Deriving the form of $Z$ and $\wt Z$ is straightforward. To compute $A^o$, note that for any $\ff$-optional  process $X$, one has, setting
$H^n= \id_{\Rbrack T_n,\infty\Rbrack}$,
\begin{eqnarray*}
{\E\left[\int_{[0,\infty)} X_s dA^{o }_s\right]}&=&\E\left[\int_{[0,\infty)} X_sdA  _s\right]  =
\E[X_\tau\id_{\{\tau<\infty\}}]
= \sum_{n=1}^\infty \E\left[ X_{T_n}z_{T_n}^n\id_{\{T_n<\infty\}}\right]\\
&=& \sum_{n=1}^\infty
\E\left [\int_{[0,\infty)} X_sz_s^n dH^n_s\right]
 \,.\end{eqnarray*}
The form of $m$ follows by \eqref{martm}.
\end{proof}

 The following result describes how,
after a thin time, the conditional expectations with
respect to elements of $\ff^\tau$ can be expressed in terms of the
conditional expectations with respect to elements of $\ff$.
For an arbitrary random time, one is able to express $\ff^\tau$-conditional expectations in terms of $\ff$-conditional expectations only strictly before $\tau$ (this result is often referred to as key lemma in enlargement of filtration literature, see Lemma 3.1 in  \cite{elliott} and Section 3.1.1 in \cite{bjr:intro}).
A powerful property of thin times is that one can obtain this kind of result also after $\tau$ as described below. It is crucial for results in Section \ref{s:thinH}.

\begin{lemma}
\label{key}
Let $\tau$ be a thin time with exhausting sequence $(T_n)_{n\geq 0}$, partition $(C_n)_{n\geq 0}$ and martingale family $(z^n)_{n\geq 0}$. Then:
\begin{enumerate}
\item[(a)] The progressive enlargement of filtration $\ff$ with $\tau$, $\ff^\tau:=(\F^\tau_t)_{t\geq 0}$, defined in \eqref{Ftau} by $\F^\tau_t:=\bigcap_{u>t}\F_u\lor\sigma(\{\tau\leq s\}: s\leq u\}$, satisfies
\begin{equation*}
\label{eq:defFt}
\F^\tau_t
=\bigcap_{u>t}\F_u\lor \sigma (C_n\cap \{T_n\leq s\}, \; s\leq u,
\; n \geq 1).
\end{equation*}
\item[(b)] For any $n\geq 1$ and any $\G$-measurable integrable random variable $X$, we have
\[
\E\left[X|\F^\tau_t\right]\id_{\{t\geq T_n\}\cap C_n}
=\id_{\{t\geq T_n\}\cap C_n}\frac{\E\left[X \id_{
C_n}|\F_t\right]}{z^n_t}.
\]
\end{enumerate}
\end{lemma}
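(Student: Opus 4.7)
For part (a), my plan is to establish both inclusions between the two descriptions. Using $T_0=\infty$ and the representation $\tau=\sum_{n\geq 0}T_n\id_{C_n}$, one has $\{\tau\leq r\}=\bigcup_{n\geq 1}(C_n\cap\{T_n\leq r\})$, which gives the forward inclusion. For the reverse, I would use the identity $C_n\cap\{T_n\leq s\}=\{T_n\leq s\}\cap\{\tau\land s=T_n\}$: the first factor lies in $\F_s$ and the second in $\sigma(\tau\land s)$, because $T_n\id_{\{T_n\leq s\}}$ is $\F_s$-measurable and on $\{T_n\leq s\}$ the equality $\tau=T_n$ is equivalent to $\tau\land s=T_n$.

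For part (b), the plan is to verify that the claimed right-hand side is the $\F^\tau_t$-conditional expectation of $X\id_{C_n\cap\{t\geq T_n\}}$. The $\F^\tau_t$-measurability is immediate: $C_n\cap\{t\geq T_n\}\in\F^\tau_t$ by (a), the numerator is $\F_t$-measurable, and $z^n_t$ is strictly positive a.s.\ on $C_n$ by Proposition \ref{positive}(a). The decisive preparation is a trace equality: setting $B_n:=C_n\cap\{T_n\leq t\}$, I would show that $\{A\cap B_n: A\in\F^\tau_t\}=\{A\cap B_n: A\in\F_t\}$. For any $u>t$, writing $\F^{\tau,0}_u:=\F_u\lor\sigma(C_m\cap\{T_m\leq s\}: s\leq u,\,m\geq 1)$, the disjoint-graph condition gives $C_m\cap B_n=\emptyset$ for $m\neq n$ and $C_n\cap\{T_n\leq s\}\cap B_n=\{T_n\leq s\}\cap B_n\in\F_t$, so the trace of $\F^{\tau,0}_u$ on $B_n$ equals the trace of $\F_u$. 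For $A\in\F^\tau_t=\bigcap_{u>t}\F^{\tau,0}_u$ this forces $A\cap B_n\in\F_u$ for every $u>t$, and hence $A\cap B_n\in\F_{t+}=\F_t$ by the usual conditions. Consequently every bounded $\F^\tau_t$-measurable $Y$ admits an $\F_t$-measurable version $\wt Y$ such that $Y\id_{B_n}=\wt Y\id_{B_n}$.

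The verification of the conditional expectation identity is then a direct tower-property computation. For any bounded $\F^\tau_t$-measurable $Y$ and its $\F_t$-measurable version $\wt Y$ on $B_n$, using $z^n_t=\E[\id_{C_n}|\F_t]$,
$$\E[XY\id_{C_n\cap\{t\geq T_n\}}]=\E\bigl[\wt Y\id_{\{t\geq T_n\}}\E[X\id_{C_n}|\F_t]\bigr]=\E\Bigl[\wt Y\id_{\{t\geq T_n\}}\tfrac{\E[X\id_{C_n}|\F_t]}{z^n_t}\,\E[\id_{C_n}|\F_t]\Bigr],$$
which, after re-absorbing $\id_{C_n}$ and replacing $\wt Y$ by $Y$ on $B_n$, coincides with $\E\bigl[Y\id_{C_n\cap\{t\geq T_n\}}\tfrac{\E[X\id_{C_n}|\F_t]}{z^n_t}\bigr]$. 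Since $Y$ is an arbitrary bounded $\F^\tau_t$-measurable test function, this is precisely the defining property of the claimed conditional expectation.

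The main obstacle I anticipate is the trace equality: it requires careful handling of the right-continuous closure in the definition of $\F^\tau_t$ together with the observation that the disjoint-graph structure of the exhausting sequence collapses all information added beyond $\F_t$ back into $\F_t$ once we restrict to $B_n$. Once this reduction is in place, the rest of part (b) is just two applications of the tower property combined with the defining identity $z^n_t=\E[\id_{C_n}|\F_t]$.
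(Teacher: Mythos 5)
Your argument is correct in substance and follows essentially the same route as the paper's proof: identify $\F^\tau_t$ through the generators $C_n\cap\{T_n\leq s\}$, show that $\F^\tau_t$ and $\F_t$ induce the same trace on $B_n=C_n\cap\{T_n\leq t\}$ (the paper's relation \eqref{FG}, obtained there via (a) and the monotone class theorem), and then run the tower-property duality computation using $z^n_t>0$ a.s.\ on $C_n$. Two small points need repair. First, in (a) the identity $C_n\cap\{T_n\leq s\}=\{T_n\leq s\}\cap\{\tau\land s=T_n\}$ fails on $\{T_n=s<\tau\}$, where $\tau\land s=s=T_n$ but $\tau\neq T_n$; since you only need membership in $\F_u\lor\sigma(\tau\land u)$ for every $u>t\geq s$, replace $\tau\land s$ by $\tau\land u$, for which the identity is exact (the paper instead argues that $\tau$ and $T_n$ are $\ff^\tau$-stopping times, so $\{\tau=T_n<\infty\}\in\F^\tau_{T_n}$). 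Second, the assertion that the trace computation ``forces $A\cap B_n\in\F_u$'' is not literally true, since $B_n$ itself is not $\F_u$-measurable; what you actually get is $A\cap B_n=F_u\cap B_n$ for some $F_u\in\F_u$, and a single $F\in\F_{t+}=\F_t$ with $F\cap B_n=A\cap B_n$ is then obtained by taking $F=\liminf_k F_{u_k}$ along a sequence $u_k\downarrow t$ --- which is exactly the $\F_t$-measurable version you use afterwards. Finally, when you insert $z^n_t/z^n_t$ before re-absorbing $\id_{C_n}$, the division off $C_n$ is harmless because $\E[X\id_{C_n}|\F_t]=0$ a.s.\ on $\{z^n_t=0\}$ (or simply keep $\id_{C_n}$ inside the expectation throughout, as the paper does).
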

\begin{proof}
(a) The proof is based on monotone class theorem and we focus on a generator. The inclusion $\bigcap_{u>t}\F_u\lor \sigma (C_n\cap \{T_n\leq s\}, \; s\leq u,
\; n \geq 1)\subset \F^\tau_t$ follows since $\tau$ and $T_n$ are $\ff^\tau$-stopping times, therefore $\{\tau=T_n<\infty\}\in \F^\tau_{T_n}$ and $\{\tau=T_n<\infty\}\cap\{T_n\leq s\}\in \F^\tau_{s}$. The reverse inclusion is due to $\{\tau\leq s\}=\bigcup_{n=1}^\infty C_n\cap\{T_n\leq s\}$.

(b) By (a) and the monotone class theorem, for each
$G\in \F^\tau_t$ there exists $F\in \F_t$ such that, for any $n\geq 1$,
\begin{equation}
\label{FG} G\cap \{T_n\leq t\}\cap C_n=F\cap \{T_n\leq t\}\cap
C_n.\end{equation}
Then, using the fact that $T_n$ are $\ff$-stopping times, we have to show that
\[
\E\left[X\id_{\{t\geq T_n\}\cap C_n}z^n_t|\F^\tau_t\right]
=\id_{\{t\geq T_n\}\cap C_n}\E\left[X \id_{\{t\geq T_n\}\cap
C_n}|\F_t\right].
\]
For any $G\in \F^\tau_t$, we choose $F\in \F_t$ satisfying \eqref{FG}, and we obtain
\begin{align*}
\E\left[X\id_{\{t\geq T_n\}\cap C_n\cap G}\; z^n_t\right]
&=\E\left[X\id_{\{t\geq T_n\}\cap C_n\cap F} \; \E\left[\id_{C_n}|\F_t\right]\right]\\
&=\E\left[\id_{\{t\geq T_n\}\cap F} \; \E\left[\id_{C_n}|\F_t\right]\E\left[X\id_{C_n}|\F_t\right]\right]\\
&=\E\left[\id_{\{t\geq T_n\}\cap C_n \cap F}\; \E\left[X \id_{C_n}|\F_t\right]\right]\\
&=\E\left[\id_{\{t\geq T_n\}\cap C_n\cap G}\; \E\left[X
\id_{C_n}|\F_t\right]\right]
\end{align*}
which ends the proof, taking into account that $z^n_t>0$ on $C_n$.
\end{proof}

\begin{corollary}
It follows immediately that, for $s\leq t$,
 $n\geq 1$ and any $\G$-measurable integrable random variable $X$
 \[
\E\left[X|\F^\tau_t\right]\id_{\{s\geq T_n\}\cap C_n}
=\id_{\{s\geq T_n\}\cap C_n}\frac{\E\left[X \id_{
C_n}|\F_t\right]}{z^n_t}.
\]
\end{corollary}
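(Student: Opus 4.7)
The plan is to derive this corollary directly from Lemma \ref{key}(b) by exploiting the inclusion of sets that arises from $s\leq t$. Since $T_n$ is an $\ff$-stopping time, the events $\{s\geq T_n\}$ and $\{t\geq T_n\}$ both belong to the filtration, and the monotonicity $s\leq t$ yields the set inclusion $\{s\geq T_n\}\subset\{t\geq T_n\}$. At the level of indicator functions this translates into the identity
\[
\id_{\{s\geq T_n\}\cap C_n}=\id_{\{s\geq T_n\}}\cdot \id_{\{t\geq T_n\}\cap C_n}.
\]

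First I would write down Lemma \ref{key}(b) at the time $t$, namely
\[
\E\left[X|\F^\tau_t\right]\id_{\{t\geq T_n\}\cap C_n}
=\id_{\{t\geq T_n\}\cap C_n}\frac{\E\left[X \id_{C_n}|\F_t\right]}{z^n_t}.
\]
Next I would multiply both sides of this equality by the nonnegative random variable $\id_{\{s\geq T_n\}}$, which is well defined since $\{s\geq T_n\}\in\F_s\subset\F_t\subset\F^\tau_t$. Using the factorization displayed above, the left-hand side collapses to $\id_{\{s\geq T_n\}\cap C_n}\E[X|\F^\tau_t]$, while the right-hand side collapses to $\id_{\{s\geq T_n\}\cap C_n}\E[X\id_{C_n}|\F_t]/z^n_t$, which is precisely the claimed identity.

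There is really no obstacle here: the statement is a purely mechanical specialization of Lemma \ref{key}(b), obtained by refining the indicator from the event $\{t\geq T_n\}$ to the smaller event $\{s\geq T_n\}$. The only minor point worth noting is that the denominator $z^n_t$ does not vanish on $C_n$ by Proposition \ref{positive}(a), so the division is legitimate on the set where the indicator is nonzero; since elsewhere both sides are zero, the formula holds globally.
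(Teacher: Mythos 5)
Your proof is correct and is exactly the "immediate" specialization the paper intends: the corollary is stated without proof, and multiplying Lemma \ref{key}(b) at time $t$ by $\id_{\{s\geq T_n\}}$, using $\{s\geq T_n\}\subset\{t\geq T_n\}$, is the intended one-line argument. The remark about $z^n_t>0$ on $C_n$ via Proposition \ref{positive}(a) is the right justification for the division.
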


\section{The hypothesis $({\mathcal H}^\prime)$ for thin times}
\label{s:thinH}

One of the vital questions in the enlargement of filtration theory is whether all semimartingales in the reference filtration remain semimartingales in an enlarged filtration, i.e., whether the hypothesis $(\mathcal H^\prime)$ holds.
In progressive enlargement setting there are only few classes of random times with this property, i.e., honest times and random times satisfying Jacod's absolutely continuous condition.
In this section we prove the hypothesis $(\mathcal H^\prime)$ for the new class of random times i.e., thin times. This contributes to and completes the existing theory in a relevant way.
Let us first recall the equivalent characterisations of the hypothesis $({\mathcal H}^\prime)$ (see
\cite[page 12]{j}).
\begin{definition}
\label{Hp}
{Let $\ff$ and $\gg$ be two filtrations such that $\ff\subset \gg$.}
Then, the hypothesis $({\mathcal H}^\prime)$ holds for $(\ff, \gg)$ if any of the following equivalent conditions holds:\\
(a) any $\ff$-semimartingale is a $\gg$-semimartingale;\\
(b) any $\ff$-martingale is a $\gg$-semimartingale;\\
(c) any bounded $\ff$-martingale is a $\gg$-semimartingale.
\end{definition}

Before formulating the result of this section we  recall a vital result by Jacod
 (see \cite[Theorem 3,2]{j} and \cite{meyer1978theoreme}) on the hypothesis $({\mathcal H}^\prime)$ in  the case of
 initial enlargement with an atomic $\sigma$-field.\\
Let $\ff^{ \, \mathcal C}$ denote the initial enlargement of the
filtration $\ff$ with the atomic $\sigma$-field
$\mathcal C:=\sigma(C_n, n\geq 0)$ generated by a partition $(C_n, n\geq 0)$ of a thin time $\tau$, i.e.,
\begin{equation}
\label{FC}
\F^{\mathcal C}_t:=\bigcap_{s>t}\F_s\lor \sigma(C_n, n\geq 0).
\end{equation}
For this case
of enlargement, Jacod's result says that the hypothesis {\bf
$({\mathcal H^\prime})$} holds for $(\ff,\ff^{ \, \mathcal C})$
and  the decomposition of any $\ff$-martingale $X$ as an $\ff^{ \,
\mathcal C}$-semimartingale is
\begin{equation}
\label{jacod_result}
X_t=\widehat X_t+\sum_n \id_{C_n} \int_0^t \frac{1}{z^n_{s-}}d\cro{X, z^n}_s,
\end{equation}
where $\widehat X$ is an $\ff^{ \, \mathcal C}$-local martingale
and $(z^n)_{n\geq 0}$ is a martingale family of the thin time $\tau$, and the predictable bracket is
computed in $\ff$.

\begin{theorem}
\label{decomposition1} Let $\tau$ be a thin time. Then
$\ff\subset \ff^\tau \subset \ff^{ \, \mathcal C}$ and the
hypothesis $({\mathcal H}^\prime)$ is satisfied for  $(\ff,
\ff^\tau)$. Moreover, for each $\ff^\tau$-predictable and bounded
process $G$ and each $\ff$-local martingale $Y$ the stochastic
integral $X:=G\bigcdot Y$ exists and is an $\ff^\tau$-semimartingale with
canonical decomposition
\begin{equation}
\label{thinH} X_t= \widehat X_t + \int_0^{t\land
\tau}\frac{G_s}{Z_{s-}}d\cro{Y,m}_s +\sum_{n=1}^\infty \id_{C_n} \int_0^t
\id_{\{s>T_n\}}\frac{G_s}{z^n_{s-}}d\cro{Y,z^n}_s,
\end{equation}
where $\widehat X$ is an $\ff^\tau$-local martingale, and the
predictable brackets are computed in $\ff$.
\end{theorem}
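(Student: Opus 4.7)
The plan is to deduce the theorem from Jacod's formula \eqref{jacod_result} by exploiting the filtration sandwich $\ff\subset\ff^\tau\subset\ff^{\mathcal C}$ together with Stricker's theorem, and then to rewrite the $\ff^{\mathcal C}$-drift so that its $\ff^\tau$-predictable counterpart emerges.

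First I would establish the inclusions and the hypothesis $(\mathcal H^\prime)$. The inclusion $\ff\subset\ff^\tau$ is by construction. For $\ff^\tau\subset\ff^{\mathcal C}$, each $C_n$ lies in $\F^{\mathcal C}_0$ and each $T_n$ is an $\ff$-stopping time, so $\{\tau\le t\}=\bigcup_{n\ge 1}(C_n\cap\{T_n\le t\})\in\F^{\mathcal C}_t$; thus $\tau$ is an $\ff^{\mathcal C}$-stopping time, and the minimality of $\ff^\tau$ yields $\F^\tau_t\subset\F^{\mathcal C}_t$. The hypothesis $(\mathcal H^\prime)$ for $(\ff,\ff^\tau)$ is then immediate: by \eqref{jacod_result} every bounded $\ff$-martingale is an $\ff^{\mathcal C}$-semimartingale, and being $\ff^\tau$-adapted, Stricker's theorem makes it an $\ff^\tau$-semimartingale.

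For the explicit decomposition, since $\ff^\tau$-predictability implies $\ff^{\mathcal C}$-predictability, formula \eqref{jacod_result} gives $X=G\bigcdot Y=(G\bigcdot\widehat Y)+A+B$, where, using $T_n=\tau$ on $C_n$, the $\ff^{\mathcal C}$-drift splits as
\[
A_t=\sum_{n\ge 1}\id_{C_n}\int_0^{t\wedge T_n}\frac{G_s}{z^n_{s-}}d\cro{Y,z^n}_s,\qquad B_t=\sum_{n\ge 1}\id_{C_n}\int_0^t\id_{\{s>T_n\}}\frac{G_s}{z^n_{s-}}d\cro{Y,z^n}_s.
\]
The process $B$ is precisely the second sum in \eqref{thinH}; its integrand is $\ff^\tau$-predictable since on $\{s>T_n\}$ the indicator $\id_{C_n}=\id_{\{\tau=T_n\}}$ is already revealed in $\ff^\tau$. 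Hence the task reduces to showing that $A-\int_0^{\cdot\wedge\tau}G_sZ_{s-}^{-1}d\cro{Y,m}_s$, combined with $G\bigcdot\widehat Y$, produces an $\ff^\tau$-local martingale $\widehat X$.

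For this I would invoke the classical progressive enlargement formula up to $\tau$: for any $\ff$-local martingale $N$, the process $N_{\cdot\wedge\tau}-\int_0^{\cdot\wedge\tau}Z_{s-}^{-1}d\cro{N,m}_s$ is an $\ff^\tau$-local martingale. Applied with $N=Y$, this identifies $\int_0^{\cdot\wedge\tau}G_sZ_{s-}^{-1}d\cro{Y,m}_s$ as the $\ff^\tau$-canonical drift of $G\bigcdot Y^\tau$, while stopping Jacod's formula at the $\ff^{\mathcal C}$-stopping time $\tau$ identifies $A$ as the $\ff^{\mathcal C}$-canonical drift of the same $G\bigcdot Y^\tau$; uniqueness of canonical decompositions then forces $(G\bigcdot\widehat Y)^\tau+A-\int_0^{\cdot\wedge\tau}G_sZ_{s-}^{-1}d\cro{Y,m}_s$ to be an $\ff^\tau$-local martingale. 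The main obstacle is the complementary ``after $\tau$'' part, namely showing that $\int_0^\cdot G_s\id_{\{s>\tau\}}d\widehat Y_s$, a priori only an $\ff^{\mathcal C}$-local martingale, is actually an $\ff^\tau$-local martingale. For this I would use the decomposition $m=\sum_{n\ge 0}z^n_{\cdot\wedge T_n}$ from Proposition \ref{positive}(c) together with Lemma \ref{key}(b), which projects $\ff^{\mathcal C}$-conditional expectations on $C_n$ after $T_n$ onto $\ff^\tau$ via division by $z^n$, and thereby supplies the required martingale property.
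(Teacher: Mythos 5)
Your argument is essentially the paper's own second proof of Theorem \ref{decomposition1}: both handle the part up to $\tau$ via Jeulin's classical progressive-enlargement formula and transfer Jacod's $\ff^{\,\mathcal C}$-decomposition of the post-$\tau$ part down to $\ff^\tau$ using the fact that the two filtrations coincide after $\tau$ (the paper packages this transfer as Lemma \ref{project}, proved from the same observation you would extract from Lemma \ref{key}(b)). The only cosmetic quibble is that for the pre-$\tau$ reconciliation you need not invoke uniqueness of canonical decompositions (the drift $A$ is not a priori $\ff^\tau$-predictable); simply subtracting the two representations of $X^\tau$ already exhibits the required $\ff^\tau$-local martingale.
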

\begin{proof}
The fact that the hypothesis $(\mathcal H^\prime)$ holds for
$(\ff, \ff^\tau)$ follows from   \eqref{jacod_result}
and Stricker's Theorem \cite[Theorem 4, Chapter II]{protter} since
$\ff^\tau \subset \ff^{ \, \mathcal C}$. To prove the
decomposition result, let $H$ be an $\ff^\tau$-predictable bounded
process. Then, \cite[Lemma (4,4)]{j} implies that
\[
H_t=\id_{\{t\leq \tau\}}J_t+\id_{\{\tau<t\}}K_t(\tau), \quad t \geq 0,
\]
where $J$ is an $\ff$-predictable bounded process and $K: \mathbb
R_{+} \times \Omega \times\mathbb R_{+} \rightarrow \mathbb R $ is
$\mathcal P \otimes \mathcal B(\mathbb R_{+})$-measurable and
bounded. Since $\tau$ is a thin time, we can rewrite the
process $H$ as
\begin{align*}
H_t=J_t\id_{\{t\leq \tau\}}+\sum_{n=1}^\infty \id_{\{T_n<t\}}K_t(T_n)\id_{C_n}.
\end{align*}
Note that, since $\{t\leq \tau\}\subset
\{Z_{t-}>0\}$, $J$ can be chosen to satisfy
$J_t=J_t\id_{\{Z_{t-}>0\}}$ and, since $C_n\subset
\{z^n_{t-}>0\}$,  each process $K^n_t:=\id_{\{T_n<t\}}K_t(T_n)$
being $\ff$-predictable and bounded,  $K^n$ can be chosen to
satisfy $K^n_t=K^n_t\id_{\{z^n_{t-}>0\}}$.

We denote by $H^1(\ff)$ the space of 
$\ff$-local martingales $N$ s.t. {$\E([N]_\infty^{1/2})<\infty$.}
Let $N$ be an $H^1(\ff)$-martingale.
Then the stochastic integrals $J\bigcdot N$ and $K^n\bigcdot N$ are well defined and each of them is an $H^1(\ff)$-martingale.
For each $n\geq 0$ and for each bounded $\ff$-martingale $N$, by integration by parts, we have that
\begin{equation}
\label{duality}
\E\left[\id_{C_n}N_\infty\right]=\E\left[[z^n, N]_\infty\right]=\E\left[\cro{z^n, N}_\infty\right].
\end{equation}
Since $N\to \E[\id_{C_n}N_\infty]$ is a linear form, the duality $(H^1,BMO)$ implies that \eqref{duality} holds for any $H^1(\ff)$-martingale $N$.
Similarly, since $m$, given in \eqref{martm}, is a $BMO(\ff)$-martingale, for any $H^1(\ff)$-martingale $N$, the process $\cro{N,m}$ exists and we have $$\E[N_\tau]=\E\left [[N, m]_\infty\right]=\E\left[\cro{N, m}_\infty\right].$$
Therefore
\begin{align*}
\E\left [\int_0^\infty H_sd N_s\right ]=&
\E\left [\int_0^\tau J_s d N_s\right ]
+\sum_{n=1}^\infty\E\left [\id_{C_n}\int_{0}^\infty K^n_s dN_s\right]\\
=&\E\left [\int_0^\infty J_s d \cro{m,N}_s\right ]
+\sum_{n=1}^\infty\E\left [\int_{0}^\infty K^n_s d\cro{z^n,N}_s\right].
\end{align*}
Then, since for any predictable finite variation process $V$,
$\E[\int_0^\infty h_sdV_s]=\E[\int_0^\infty \;^p h_sdV_s]$, and $Z_-=\,^p (1-A_-)$, we
deduce, taking care on the specific choice of $J$ and $K$,
\begin{eqnarray*}
\E\left [\int_0^\infty H_sd N_s\right ]&=&
\E\left [\int_0^\infty \frac{Z_{s-}}{Z_{s-}}\id_{\{Z_{s-}>0\}}J_s d \cro{m,N}_s\right ]\\
&&\quad+\sum_{n=1}^\infty\E\left [\int_{0}^\infty \frac{z^n_{s-}}{z^n_{s-}}\id_{\{z^n_{s-}>0\}}K^n_s d\cro{z^n,N}_s\right]\\
&=&\E\left [\int_0^\tau \frac{1}{Z_{s-}}J_s d \cro{m,N}_s\right ]
+\sum_{n=1}^\infty\E\left [\id_{C_n}\int_{0}^\infty \frac{1}{z^n_{s-}}K^n_s d\cro{z^n,N}_s\right].
\end{eqnarray*}
The assertion of the theorem follows as, for any $s\leq t$ and $F\in \F^\tau_s$, the process $H=\id_{(s,t]}\id_{F}$ is clearly $\ff^\tau$-predictable.
To end the proof, we recall that any local martingale is locally in $H^1$ (see \cite[Theorem 51, Chapter IV]{protter}).
\end{proof}

\begin{remark}
Lemma (4,11) in \cite{j}, where the random time with countably many
values is considered, is  a special case of Theorem
\ref{decomposition1}. It corresponds to the situation of thin
random time whose graph is included in countable union of constant
sections, i.e, $\graph{\tau}\subset \bigcup_{n}\graph{t_n}$ with
$\graph{t_n}=\{(\omega, t_n): \omega\in \Omega\}$ and $t_n\in\mathbb R$.
\end{remark}

We end this section with a second proof of Theorem \ref{decomposition1} which is based on  result linking processes in $\ff^\tau$
and $\ff^{ \, \mathcal C}$ stated in Lemma \ref{project}.

\emph{Second proof of Theorem \ref{decomposition1}}.
Let $X$ be a bounded $\ff$-martingale. Then, it is enough to show that $\id_{\Rbrack 0, \tau \Lbrack}\bigcdot X$ and $\id_{\Lbrack \tau, \infty \Rbrack}\bigcdot X$ are two $\ff^\tau$-semimartingales with appropriate decompositions.
By \cite[Proposition (4,16)]{j}
$\id_{\Rbrack 0, \tau \Lbrack}\bigcdot X=X_{\cdot\land \tau}$ is an $\ff^\tau$-semimartingale with the decomposition
$$X_{t\land \tau}=\wh X^1_t+\int_0^{t\land
\tau}\frac{1}{Z_{s-}}d\cro{X,m}_s,$$
where $\wh X^1$ is an $\ff^\tau$-local martingale.
By Lemma \ref{project} and Jacod's result \eqref{jacod_result} it follows that $\id_{\Lbrack \tau, \infty \Rbrack}\bigcdot X$ is an $\ff^\tau$-semimartingale with  decomposition
$$X_t-X_{t\land \tau}=\wh X^2_t
+\sum_{n=1}^\infty \id_{C_n} \int_0^t
\id_{\{s>T_n\}}\frac{1}{z^n_{s-}}d\cro{X,z^n}_s,
$$
where $\widehat X^2$ is an $\ff^\tau$-local martingale. This completes the proof.
\qed

\begin{lemma}
\label{project}
Let $\tau$ be a thin time and $Y$ be a process such that $Y=\id_{\Lbrack \tau, \infty \Rbrack}\bigcdot Y$. Then:\\
(a) The process $Y$ is an $\ff^{ \, \mathcal C}$-(super, sub)martingale if and only if the process $Y$ is an $\ff^\tau$-(super, sub)martingale.\\
(b) Let $\vartheta$ be an $\ff^{ \, \mathcal C}$-stopping time. Then $\vartheta\lor \tau$ is an $\ff^\tau$-stopping time.\\
(c) The process $Y$ is an $\ff^{ \, \mathcal C}$-local martingale if and only if the process $Y$ is an $\ff^\tau$-local martingale.
\end{lemma}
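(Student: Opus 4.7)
\emph{Proof plan.} I would prove the three parts in the order (b), (a), (c): (b) is a direct set-theoretic computation, (a) is the main new content, and (c) reduces to (a) by localization.

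For (b), I decompose $\{\vartheta\vee\tau\leq u\}=\{\vartheta\leq u\}\cap\{\tau\leq u\}$. Using right-continuity of $\ff$ and the atomic structure of $\mathcal C=\sigma(C_n,n\geq 0)$, any $A\in\F^{\mathcal C}_u$ can be written $A=\bigsqcup_n A_n\cap C_n$ with $A_n\in\F_u$. Intersecting with $\{\tau\leq u\}=\bigsqcup_n C_n\cap\{T_n\leq u\}$ and using disjointness of the $C_n$'s gives $\bigsqcup_n A_n\cap C_n\cap\{T_n\leq u\}$, each term of which lies in $\F^\tau_u$ by Lemma \ref{key}(a).

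For (a), both directions share an adaptedness step: any $\F^{\mathcal C}_t$-measurable random variable $V$ with $V=V\id_{\{t\geq\tau\}}$ is already $\F^\tau_t$-measurable. To see this, decompose $V=\sum_n g_n\id_{C_n}$ with $g_n\in\F_t$, rewrite as $V=\sum_n g_n\id_{C_n\cap\{T_n\leq t\}}$ by the support condition, and invoke Lemma \ref{key}(a). The forward implication ($\ff^{\mathcal C}\Rightarrow\ff^\tau$) of the (super)martingale inequality is then immediate from the tower property, since $\ff^\tau\subset\ff^{\mathcal C}$ and $Y_s$ is $\F^\tau_s$-measurable: $\E[Y_t\mid\F^\tau_s]=\E[\E[Y_t\mid\F^{\mathcal C}_s]\mid\F^\tau_s]\leq\E[Y_s\mid\F^\tau_s]=Y_s$. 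The reverse implication is the main obstacle. Here I would first observe that the trace $\sigma$-algebras of $\F^{\mathcal C}_s$ and $\F^\tau_s$ on $\{s\geq\tau\}$ coincide (again by the atomic decomposition combined with Lemma \ref{key}(a)), so the conditional expectations of $Y_t$ with respect to them agree on $\{s\geq\tau\}$ and the inequality transfers there. On $\{s<\tau\}$, where $Y_s=0$, I would apply Jacod's formula for $\ff^{\mathcal C}$ to reduce the inequality $\E[Y_t\mid\F^{\mathcal C}_s]\leq 0$ to the collection of per-$n$ inequalities $\E[Y_t\id_{A_n\cap C_n\cap\{s<T_n\leq t\}}]\leq 0$ for $A_n\in\F_s$ with $A_n\subset\{s<T_n\}$, and verify each one by optional sampling at the $\ff^\tau$-stopping time $T_n^{A_n}\wedge t$, where $T_n^{A_n}:=T_n\id_{A_n}+\infty\id_{A_n^c}$ is well defined under this restriction on $A_n$, combined with an additional optional sampling step on the stopped supermartingale $Y^\tau=Y_{\cdot\wedge\tau}$ (which gives $\E[Y_\tau\mid\F^\tau_{s\wedge\tau}]\leq 0$ on $\{s<\tau\}$) to control the resulting $Y_\tau$ integrals. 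The sub- and martingale cases are symmetric.

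For (c), I localize using $\rho_k:=\inf\{t\geq 0:|Y_t|>k\}\wedge k$, which is a stopping time for both filtrations since $Y$ is adapted to $\ff^\tau$ by the adaptedness step of (a), and $\rho_k\to\infty$. Each stopped process $Y^{\rho_k}$ is bounded, remains supported on $\Lbrack\tau,\infty\Rbrack$, and is a local martingale in whichever filtration $Y$ itself is, hence a true bounded martingale there; part (a) transfers this true martingale property to the other filtration, and the limit $k\to\infty$ yields the equivalence of local martingale properties. Part (b) is used implicitly here to ensure consistency when switching between stopping times of the two filtrations.
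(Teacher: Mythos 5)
Your architecture is the same as the paper's: the engine is the equality of the traces of $\F^{\mathcal C}_t$ and $\F^\tau_t$ on $\{\tau\leq t\}$ (which the paper proves via $G=C_n\mapsto F=C_n\cap\{\tau\leq t\}$ and a monotone class argument), and from it you correctly get adaptedness, part (b), the tower-property direction of (a), and the agreement of conditional expectations on $\{s\geq\tau\}$. The paper leaves the remaining direction on $\{s<\tau\}$ implicit; you spell it out, and that is exactly where your write-up has a soft spot. The bound $\E[Y_\tau\mid\F^\tau_{s\wedge\tau}]\leq 0$ on $\{s<\tau\}$ cannot be integrated against the set $A_n\cap C_n\cap\{s<T_n\leq t\}$, because that set is not $\F^\tau_s$-measurable (it involves $C_n$ strictly before $T_n$ has occurred); so as a ``control'' of the $Y_\tau$ integrals it proves nothing. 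What actually closes the argument is that $\Lbrack\tau,\infty\Rbrack$ is the stochastic interval \emph{open} at $\tau$, so the hypothesis $Y=\id_{\Lbrack\tau,\infty\Rbrack}\bigcdot Y$ forces $Y\equiv 0$ on $[\![ 0,\tau]\!]$ and in particular $Y_\tau=0$; the term you are trying to control vanishes identically, and your optional sampling at $T_n^{A_n}\wedge t$ already finishes the proof. This is not a cosmetic point: if $Y$ is allowed to charge $\graph{\tau}$ the statement is false. Take $S_1,S_2$ i.i.d.\ exponential with natural filtration $\ff$, and $\tau=S_1\wedge S_2$, a thin time (indeed an $\ff$-stopping time) with $T_i=S_i$, $C_1=\{S_1<S_2\}$, $C_2=\{S_2<S_1\}$; then $Y_u=(\id_{C_1}-\id_{C_2})\id_{\{\tau\leq u\}}$ is a bounded $\ff^\tau$-martingale supported on $[\![\tau,\infty[\![$, yet $\E[Y_v\id_{C_1}]>0=\E[Y_0\id_{C_1}]$, so $Y$ is not an $\ff^{\mathcal C}$-supermartingale. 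Make the role of $Y_\tau=0$ explicit and your (a) is complete.

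Two smaller repairs. In (c), $Y^{\rho_k}$ with $\rho_k=\inf\{t:|Y_t|>k\}\wedge k$ is \emph{not} bounded (the jump at $\rho_k$ is uncontrolled), so ``bounded local martingale, hence true martingale'' does not apply as written; either intersect $\rho_k$ with a genuine localizing sequence and invoke domination, or argue as the paper intends: if $(\vartheta_k)$ localizes $Y$ in $\ff^{\mathcal C}$, then $\vartheta_k\vee\tau$ is an $\ff^\tau$-stopping time by (b), and $Y^{\vartheta_k\vee\tau}=Y^{\vartheta_k}$ because $Y$ vanishes on $[\![ 0,\tau]\!]$, so (a) transfers the martingale property; the converse direction is immediate since every $\ff^\tau$-stopping time is an $\ff^{\mathcal C}$-stopping time. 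Finally, when you reduce $\E[Y_t\mid\F^{\mathcal C}_s]\leq 0$ on $\{s<\tau\}$ to the per-$n$ inequalities, note that the decomposition $G=\bigcup_n A_n\cap C_n$ with $A_n\in\F_s$ for $G\in\F^{\mathcal C}_s$ uses right-continuity of $\ff$ and is only determined up to null sets on $\{z^n_s=0\}$; this is harmless here but worth a sentence.
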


\begin{proof}
(a) Note that the filtrations $\ff^\tau$ and $\ff^{ \, \mathcal C}$ are equal after $\tau$, i.e.,
for each $t$ and for each set $G\in\F^\mathcal C_t$, there exists a set $F\in \F^\tau_t$ such that
\begin{equation}
\label{2fil}
\{\tau\leq t\}\cap G=\{\tau\leq t\}\cap F .
\end{equation}
To show \eqref{2fil}, by monotone class theorem, it is enough to consider $G=C_n$ and to take $F=C_n\cap\{\tau\leq t\}$ which belongs to $\F^\tau_t$ as $C_n\in \F^\tau_\tau$ by \cite[Corollary 3.5]{chinois}.
That implies that the process $\id_{\Lbrack \tau, \infty \Rbrack}\bigcdot Y$ is $\ff^\tau$-adapted if and only if it is $\ff^{ \, \mathcal C}$-adapted.
The equivalence of (super-, sub-) martingale property comes from \eqref{2fil}.\\
(b) For each $t$ we have $\{\vartheta\lor\tau\leq t\}=\{\vartheta\leq t\}\cap\{\tau\leq t\}\in \F^\tau_t$ by \eqref{2fil}.\\
(c) We combine the two previous points.
\end{proof}

\section{Immersion for thin times}
\label{s:thinimm}

Immersion, also called the hypothesis $(\mathcal H)$ is a more restrictive hypothesis for enlargement of filtration
 then the hypothesis $({\mathcal H}^\prime)$.
Given $\ff\subset \gg$, we say that $\ff$ is immersed in $\gg$ if any $\ff$-martingale is a $\gg$-martingale.
The equivalent condition to immersion, established in Theorem 3 in \cite{BY}, says that for each $t\geq 0$ and $G\in L^1(\G_t)$ it holds that $\E[G|\F_t]=\E[G|\F_\infty]$.
Immersion does not hold for each thin time. However, in the next proposition, an equivalent condition to immersion is given.
In particular it implies that there exist thin times for which immersion holds and which are not stopping times.

\begin{proposition}
\label{H:thin}
Let $\tau$ be a thin time with exhausting sequence $(T_n)_{n\geq 0}$, partition $(C_n)_{n\geq 0}$ and martingale family $(z^n)_{n\geq 0}$. Then, $\ff$ is immersed in $\ff^\tau$ if and only if one of the following conditions hold:
\begin{enumerate}
\item[(a)] $z^n_\infty=z^n_{T_n}$ for each $n\geq 1$,
\item[(b)] $z^n_t=z^n_{T_n\land t}$ for each $t\geq 0$ for each $n\geq 1$,
\item[(c)] for each $n\geq 1$, $C_n$ is independent of $\F_\infty$ conditionally w.r.t. $\F_{T_n}$.
\end{enumerate}
\end{proposition}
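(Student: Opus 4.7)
\emph{Plan.} My strategy is first to establish the equivalences among (a), (b), (c), and then to close the loop with immersion by proving (b) $\Rightarrow$ immersion via the decomposition \eqref{thinH} and immersion $\Rightarrow$ (a) via the Br\'emaud-Yor characterization recalled just before the proposition. For the equivalences: (b) $\Rightarrow$ (a) is immediate by letting $t=\infty$; (a) $\Rightarrow$ (b) comes from the fact that $z^n$ is a uniformly integrable $\ff$-martingale (being bounded), so on $\{T_n\leq t\}$ we have $z^n_t=\E[z^n_\infty|\F_t]=\E[z^n_{T_n}|\F_t]=z^n_{T_n}$, using (a) and the $\F_t$-measurability of $z^n_{T_n}\id_{\{T_n\leq t\}}$; and (a) $\Leftrightarrow$ (c) reduces to optional sampling, since $z^n_{T_n}=\P(C_n|\F_{T_n})$ and $z^n_\infty=\P(C_n|\F_\infty)$, so (a) reads $\P(C_n|\F_{T_n})=\P(C_n|\F_\infty)$, which is exactly the $\F_{T_n}$-conditional independence of $C_n$ and $\F_\infty$.

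For the implication (b) $\Rightarrow$ immersion, Proposition \ref{positive}(c) gives $m_t=\sum_{n\geq 0}z^n_{t\wedge T_n}$. Under (b), each $z^n$ is stopped at $T_n$ (trivially for $n=0$ since $T_0=\infty$), hence
\begin{equation*}
m_t=\sum_{n\geq 0}z^n_t=\sum_{n\geq 0}\P(C_n|\F_t)=1,
\end{equation*}
using the partition of unity. Consequently $\cro{Y,m}=0$ for every $\ff$-local martingale $Y$, and because each $z^n$ is constant on $(T_n,\infty)$ the predictable bracket $\cro{Y,z^n}$ is constant there as well. Both correction terms in \eqref{thinH} with $G=1$ therefore vanish, so every bounded $\ff$-martingale coincides with an $\ff^\tau$-local martingale and is thus an $\ff^\tau$-martingale, which establishes immersion.

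The direction immersion $\Rightarrow$ (a) is what I expect to be the most delicate step, but it reduces to a single application of the Br\'emaud-Yor characterization $\E[G|\F_t]=\E[G|\F_\infty]$ for $G\in L^1(\F^\tau_t)$. Fixing $n\geq 1$ and $t\geq 0$, I apply it to $G:=\id_{C_n\cap\{T_n\leq t\}}$, which is $\F^\tau_t$-measurable because $C_n\in\F^\tau_\tau$ (see the proof of Lemma \ref{project}(a)) and $C_n\cap\{T_n\leq t\}=C_n\cap\{\tau\leq t\}$. The two conditional expectations evaluate to $\id_{\{T_n\leq t\}}z^n_t$ and $\id_{\{T_n\leq t\}}z^n_\infty$ respectively, so $z^n_t=z^n_\infty$ a.s.\ on $\{T_n\leq t\}$. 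Taking a common exceptional null set over rational $t$ and letting $t\downarrow T_n$ on $\{T_n<\infty\}$ by right-continuity of $z^n$ yields $z^n_{T_n}=z^n_\infty$ a.s.\ on $\{T_n<\infty\}$, which is exactly condition (a) and thereby closes the chain of implications.
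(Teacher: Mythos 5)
Your proof is correct, but the two directions of the equivalence with immersion are handled differently from the paper, which runs the whole argument as a single two-way reduction: by the Br\'emaud--Yor criterion of \cite{BY}, Lemma \ref{key}~(a) and the monotone class theorem, immersion of $\ff$ in $\ff^\tau$ is \emph{equivalent} to $\P(C_n\cap\{T_n\leq t\}|\F_t)=\P(C_n\cap\{T_n\leq t\}|\F_\infty)$ for all $n$ and $t$, i.e.\ to $z^n_t\id_{\{T_n\leq t\}}=z^n_\infty\id_{\{T_n\leq t\}}$, which is condition (b); conditions (a) and (c) are then read off by optional sampling exactly as you do. You only use the easy half of this reduction (immersion $\Rightarrow$ the generator identity $\Rightarrow$ (a)), and for the converse you route through Theorem \ref{decomposition1}: under (b) you note that $m_t=\sum_n z^n_{t\wedge T_n}=\sum_n z^n_t=1$ and that each $z^n$ is stopped at $T_n$, so both drift terms in \eqref{thinH} vanish and every bounded $\ff$-martingale is an $\ff^\tau$-(local, hence true) martingale. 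This is valid — a bounded local martingale is a martingale, and checking immersion on bounded martingales suffices by the standard equivalences in \cite{BY} — and it has the virtue of making the mechanism visible (it recovers, in particular, the pseudo-stopping-time identity $m\equiv 1$ noted in the remark after the proposition), but it invokes the considerably heavier machinery of the $(\mathcal H^\prime)$ decomposition where the paper's monotone-class argument closes the loop at the level of conditional expectations alone and avoids the extra reduction from bounded to general martingales. Your intermediate equivalences (a)~$\Leftrightarrow$~(b)~$\Leftrightarrow$~(c), including the measurability of $z^n_{T_n}\id_{\{T_n\leq t\}}$ and the passage to $z^n_{T_n}=z^n_\infty$ via rational $t\downarrow T_n$ and right-continuity, are all sound and match the paper's use of the martingale property of $z^n$.
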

\begin{proof}
By Theorem 3 in \cite{BY}, Lemma \ref{key} (a) and monotone class theorem, $\ff$ is immersed in $\ff^\tau$ if and only if $\P(C_n\cap \{T_n\leq t\}|\F_t)=\P(C_n\cap \{T_n\leq t\}|\F_\infty)$ for each $n\geq 1$.
The last condition is precisely $z^n_t\id_{\{T_n\leq t\}}=z^n_\infty\id_{\{T_n\leq t\}}$ for each $n\geq 1$, which is the condition (b).
Since $z^n$ are martingales, we conclude that immersion is equivalent to $z^n_{T_n}=z^n_\infty$ stated in the condition (a).
Since $z^n_{T_n}=z^n_\infty$ can be rewritten as $\P(C_n|\F_{T_n})=\P(C_n|\F_\infty)$, we conclude that immersion is satisfied if and only if, for each $n\geq1$, $C_n$ is independent of $\F_\infty$ conditionally w.r.t. $\F_{T_n}$.
\end{proof}

\begin{remark}
Immersion property for a random time $\tau$ implies in particular that $\tau$ is a pseudo-stopping time which where studied in \cite{williams02} and \cite{NY1}.
A random time $\tau$ is a pseudo-stopping time if for any bounded $\ff$-martingale $X$ it holds that $\E[X_\tau]=\E[X_0]$, or equivalently as established in \cite{NY1}, if $m\equiv 1$.

Let $\tau$ be a thin time. Then, by Proposition \ref{positive}, $\tau$ is a pseudo-stopping time if and only if $\sum_{n=0}^\infty z^n_{t\land T_n}=1$ for any $t\geq 0$. Clearly, immersion implies the last condition as, by Proposition \ref{H:thin} (b) and since $T_0=\infty$,
$$\sum_{n=0}^\infty z^n_{t\land T_n}=z^0_t+\sum_{n=1}^\infty z^n_{t\land T_n}=\sum_{n=0}^\infty z^n_{t}=1.$$
Reverse implication does not hold.
\end{remark}

\section{Thin-thick decomposition of a random time}
\label{s:decomp}

In this section we present an application of thin times to the decomposition of a generic random time into thin and thick parts. In the first subsection we introduce and present some result about thick times.
Then, in the second subsection, we establish the thin-thick decomposition. Finally, in the remaining subsections, we apply thin-thick decomposition to obtain results on the hypothesis $(\mathcal H^\prime)$ and immersion.

\subsection{Thick times}
As described in the introduction, thick times avoid stopping times from the reference filtration, i.e., thick times are defined in the following way.
\begin{definition}
\label{thick_defi}
A random time $\tau$ is called
a thick time if $\graph {\tau}  \cap\graph  {T}$ is evanescent for any $\ff$-stopping time $T$, i.e., if it  avoids all $\ff$-stopping times.
\end{definition}
Similarly as for thin times in Theorem \ref{A0jump}, thick times can be characterized in terms of their dual optional projection.
\begin{theorem}
\label{A0cont}
A random time is a thick time if and only if its dual optional projection is a continuous process.
In that case $A^o=A^p$.
\end{theorem}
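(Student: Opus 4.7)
The strategy is to connect jumps of $A^o$ at $\ff$-stopping times to the probability $\P(\tau = T < \infty)$ via the defining property of the dual optional projection, and then to exhaust all jumps of $A^o$ by countably many stopping times. The key identity, valid for any $\ff$-stopping time $T$, comes from applying the defining property of $A^o$ to the bounded optional process $\id_{\graph{T}}$:
\[
\E\bigl[\Delta A^o_T \id_{\{T<\infty\}}\bigr] = \E\bigl[\Delta A_T \id_{\{T<\infty\}}\bigr] = \P(\tau = T < \infty),
\]
where the last equality uses that $A = \id_{\Rbrack\tau,\infty\Rbrack}$ jumps by $1$ exactly at time $\tau$.

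For the $(\Leftarrow)$ direction, if $A^o$ is continuous then $\Delta A^o_T = 0$ for every $\ff$-stopping time $T$, so the identity above forces $\P(\tau = T < \infty) = 0$, i.e. $\tau$ is thick.

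For the $(\Rightarrow)$ direction, assume $\tau$ is thick. Since $A^o$ is c\`adl\`ag and adapted, the set $\{\Delta A^o \neq 0\}$ is a thin $\ff$-optional set and can be covered by a sequence $(S_n)_{n\geq 1}$ of $\ff$-stopping times with disjoint graphs. Applying the identity at each $S_n$ and using the thickness of $\tau$ yields $\E\bigl[\Delta A^o_{S_n} \id_{\{S_n<\infty\}}\bigr] = 0$, and since $A^o$ is increasing (so $\Delta A^o_{S_n} \geq 0$), this gives $\Delta A^o_{S_n} = 0$ a.s. As $(S_n)$ exhausts all jumps of $A^o$, the process $A^o$ is continuous.

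For the final claim $A^o = A^p$, observe that a continuous adapted process is automatically predictable, so $A^o$ is predictable under the equivalent conditions. Then, for every bounded predictable (hence optional) process $X$,
\[
\E\Bigl[\int_0^\infty X_s\, dA_s\Bigr] = \E\Bigl[\int_0^\infty X_s\, dA^o_s\Bigr],
\]
which is precisely the defining property of the dual predictable projection of $A$. By uniqueness of $A^p$ we conclude $A^o = A^p$. The only step needing a bit of care is the exhaustion of $\{\Delta A^o \neq 0\}$ by a sequence of $\ff$-stopping times, which is the standard bridge between the pointwise thickness assumption on $\tau$ and the global continuity of $A^o$.
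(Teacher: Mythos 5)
Your proof is correct and follows essentially the same route as the paper's: the identity $\E[\Delta A^o_T\id_{\{T<\infty\}}]=\P(\tau=T<\infty)$, the monotonicity of $A^o$, and the exhaustion of the optional set $\{\Delta A^o>0\}$ by graphs of stopping times (the optional section theorem). You additionally spell out the argument for $A^o=A^p$ (continuity plus adaptedness gives predictability, then uniqueness of the dual predictable projection), which the paper leaves implicit.
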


\begin{proof}
Let $T$ be an $\ff$-stopping time.
Since $\E[\Delta A^o_T\id_{\{T<\infty\}}]=\P(\tau=T<\infty)$ and $A^o$ is an
increasing process, we deduce that
$$\P(\tau=T<\infty)=0 \quad \textrm{if and only
if}  \quad \Delta A^o_T\id_{\{T<\infty\}}=0\;\;\; \P\textrm{-a.s.}
$$
Since $\{\Delta A^o>0\}$ is an optional set, the optional section theorem \cite[Theorem 4.7]{chinois} implies that
$\{\Delta A^o>0\}$ is exhausted by disjoint graphs of $\ff$-stopping times.
Thus, we conclude that $\tau$ is a thick time if and only if $A^o$ is continuous.
\end{proof}

The straightforward observation that the two  classes of thin and thick
times have trivial intersection is stated in the following lemma.

\begin{lemma}
A random time $\tau$  belongs to the class of thick times and to the class of thin times if and only if $\tau=\infty$.
\end{lemma}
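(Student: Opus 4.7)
The plan is to combine the two characterizations via the dual optional projection that have already been established, namely Theorem \ref{A0jump} (thin $\Leftrightarrow$ $A^o$ is pure jump) and Theorem \ref{A0cont} (thick $\Leftrightarrow$ $A^o$ is continuous), together with the fact that $A^o$ is the dual optional projection of $A=\id_{\Rbrack \tau,\infty\Rbrack}$, which starts at $0$ and is increasing.

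First I would handle the trivial direction. If $\tau=\infty$ a.s., then $A\equiv 0$, so $A^o\equiv 0$ is both continuous and pure jump; equivalently, the graph $\graph{\tau}$ is contained in $\graph{\infty}$ (whence $\tau$ is thin with exhausting sequence $(T_n)_{n\geq 0}$ reduced to $T_0=\infty$), and $\graph{\tau}\cap \graph{T}$ is evanescent for every $\ff$-stopping time $T$ (whence $\tau$ is thick).

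For the nontrivial direction, suppose $\tau$ is simultaneously thin and thick. By Theorem \ref{A0jump}, $A^o$ is a pure jump process, so $A^o_t=\sum_{s\leq t}\Delta A^o_s$. By Theorem \ref{A0cont}, $A^o$ is continuous, so $\Delta A^o\equiv 0$. Combining these gives $A^o\equiv 0$. Since $A^o$ is the dual optional projection of $A=\id_{\Rbrack \tau,\infty\Rbrack}$, one has $\E[A^o_\infty]=\E[A_\infty]=\P(\tau<\infty)$, and therefore $\P(\tau<\infty)=0$, i.e., $\tau=\infty$ a.s.

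I do not expect any serious obstacle here: the argument is essentially a two-line consequence of the preceding characterizations, the main point being simply to observe that a nondecreasing c\`adl\`ag process starting at $0$ that is both continuous and purely discontinuous must vanish identically.
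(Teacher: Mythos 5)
Your argument is correct. The paper actually states this lemma without proof, calling it a ``straightforward observation,'' so there is no official proof to compare against; but your two directions are both sound: the graph of $\tau=\infty$ (as a subset of $\Omega\times\R^+$) is empty, hence contained in any thin set and disjoint from every $\graph{T}$; and conversely a nondecreasing c\`adl\`ag process that is simultaneously pure jump and continuous vanishes, so $\P(\tau<\infty)=\E[A^o_\infty]=0$. Note, though, that your route through Theorems \ref{A0jump} and \ref{A0cont} is heavier than necessary: the intended one-line argument works directly at the level of graphs. If $\tau$ is thin, then $\graph{\tau}\subset\bigcup_{n\geq 1}\graph{T_n}$ for some exhausting sequence, and if $\tau$ is also thick, each intersection $\graph{\tau}\cap\graph{T_n}$ is evanescent, so $\graph{\tau}=\bigcup_{n\geq 1}\left(\graph{\tau}\cap\graph{T_n}\right)$ is evanescent, i.e., $\P(\tau<\infty)=0$. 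This avoids invoking the dual optional projection entirely and uses only the definitions; your version buys nothing extra here, but it is not wrong.
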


\subsection{Decomposition of a random time}

The main concept of this section, the thin-thick decomposition, is presented in the next definition.
 It is followed by the result stating the existence of such a decomposition for any random time.

\begin{definition}
\label{basic}
Consider a random time $\tau$.
A pair of random times $(\tau_1,\tau_2)$ is called a thin-thick decomposition of $\tau$ if
$\tau_1$ is a thin time, $\tau_2$ is a thick time, and
$$\tau=\tau_1\land \tau_2 \quad \quad \tau_1\lor\tau_2=\infty.$$
\end{definition}

\begin{theorem}
\label{A0decom}
Any random time $\tau$ has a thin-thick decomposition $(\tau_1, \tau_2)$ which is unique on the set $\{\tau<\infty\}$.
\end{theorem}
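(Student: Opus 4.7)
The plan is to build the decomposition directly from the dual optional projection $A^o$ of $\id_{\Rbrack\tau,\infty\Rbrack}$, exploiting Theorems \ref{A0jump} and \ref{A0cont} as our thin/thick detectors. The set $\{\Delta A^o>0\}$ is an $\ff$-optional thin set, so by the optional section theorem it is exhausted, up to evanescence, by a sequence $(T_n)_{n\geq 1}$ of $\ff$-stopping times with disjoint graphs. I would then set
\[
C:=\bigcup_{n\geq 1}\{\tau=T_n<\infty\},\qquad \tau_1:=\tau\id_C+\infty\id_{C^c},\qquad \tau_2:=\tau\id_{C^c}+\infty\id_{C},
\]
so that $\tau_1\land\tau_2=\tau$ and $\tau_1\lor\tau_2=\infty$ by construction.

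Thinness of $\tau_1$ is essentially free: $\graph{\tau_1}\subset\bigcup_{n\geq 1}\graph{T_n}$, so $(T_n)_{n\geq 0}$ with $T_0=\infty$ is an exhausting sequence. The first genuine step is thickness of $\tau_2$. Given an $\ff$-stopping time $S$, I would introduce the stopping time
\[
\widetilde S:=S\id_{\bigcap_{n\geq 1}\{S\neq T_n\}}+\infty\id_{\bigcup_{n\geq 1}\{S=T_n\}},
\]
which is a stopping time because $\{S=T_n\}\in\F_{S\land T_n}\subset\F_S$. Since $\graph{\widetilde S}$ is disjoint from $\bigcup_n\graph{T_n}$ and the latter exhausts $\{\Delta A^o>0\}$, we have $\Delta A^o_{\widetilde S}=0$ on $\{\widetilde S<\infty\}$, hence $\P(\tau=\widetilde S<\infty)=\E[\Delta A^o_{\widetilde S}\id_{\{\widetilde S<\infty\}}]=0$. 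Observing that $\{\tau_2=S<\infty\}\subset\{\tau=\widetilde S<\infty\}$ gives $\P(\tau_2=S<\infty)=0$ for every $\ff$-stopping time $S$, which is thickness.

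For uniqueness on $\{\tau<\infty\}$, suppose $(\tau_1',\tau_2')$ is another thin-thick decomposition with exhausting sequence $(T_n')_{n\geq 1}$ for $\tau_1'$. On $\{\tau_1<\infty\}$ we have $\tau\in\{T_n:n\geq 1\}$, while thickness of $\tau_2'$ yields $\P(\tau_2'=T_n<\infty)=0$ for every $n$; hence $\tau_2'\neq\tau$ a.s.\ there, which forces $\tau_1'=\tau$ and $\tau_2'=\infty$ a.s.\ on $\{\tau_1<\infty\}$. The symmetric argument on $\{\tau_1'<\infty\}$, using thickness of $\tau_2$ and the exhausting sequence $(T_n')$, shows the converse. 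Combining these gives $\{\tau_1<\infty\}=\{\tau_1'<\infty\}$ up to a null set and $\tau_i=\tau_i'$ on $\{\tau<\infty\}$. The main subtlety I expect is the thickness verification in the middle step, because one must rule out $\tau_2$ hitting \emph{every} $\ff$-stopping time, not just the $T_n$'s already used to define $C$; the stopping-time substitution $\widetilde S$ together with the optional section property of $A^o$ is what makes this go through.
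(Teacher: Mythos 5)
Your proposal is correct and follows essentially the same route as the paper: both split $\tau$ according to whether $(\omega,\tau(\omega))$ lies in the optional thin set $\{\Delta A^o>0\}$, exhausted by stopping times $(T_n)$, and verify thinness of $\tau_1$ by the graph inclusion. The only differences are cosmetic — you check thickness of $\tau_2$ via the modified stopping time $\widetilde S$ rather than via the duality identity $\P(\tau_2=T<\infty)=\E\bigl[\int_0^\infty \id_{\{u=T\}\cap\{\Delta A^o_u=0\}}\,dA^o_u\bigr]$ used in the paper — and you additionally spell out the uniqueness argument, which the paper's proof leaves implicit.
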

\begin{proof}
Let us define  $\tau_1$ and $\tau_2$  as
$\tau_1:=\tau_{\{\Delta A^o_\tau >0 \}}$ and
$\tau_2:=\tau_{\{\Delta A^o_\tau =0 \}},$ where $\tau_C$ is the
restriction of the random time $\tau$ to the set $C$, defined as
$\tau_C=\tau\id_{C}+\infty\id_{C^c}.$ Properties of dual optional
projection ensure that $\tau_1$ and $\tau_2$ satisfy the required
conditions. More precisely, the time $\tau_1$ is a thin time since
\begin{align*}
\graph{\tau_1}=\graph{\tau}\cap\{\Delta A^o>0\}
= \graph{\tau}\cap\bigcup_n \ \graph{T_n} \subset \ \bigcup_n \ \graph{T_n},
\end{align*}
where the sequence $(T_n)_n$ exhausts the jumps of the c\`adl\`ag
increasing process $A^o$, i.e., $\{\Delta
A^o>0\}=\bigcup_n \graph{T_n}$ and the time $\tau_2$ is a thick time
since, for any $\ff$-stopping time $T$,
\begin{align*}
\P(\tau_2=T<\infty)&
=\E\left[\id_{\{\tau=T\}\cap\{\Delta A^o_\tau =0 \}}\id_{(T<\infty)}\right]\\
&=\E\left[\int_0^\infty \id_{\{u=T\}\cap\{\Delta A^o_u =0 \}}dA^o_u\right]=0.
\end{align*}.
\end{proof}

For $i\in\{1, 2\}$, corresponding to the thin part $\tau_1$ and the thick part $\tau_2$ of a random time $\tau$,
we define $A^i:=\id_{\Rbrack \tau_i, \infty\Rbrack}$. Then $A^{i, p}$ and $A^{i, o}$ are respectively the $\ff$-dual predictable projection and the $\ff$-dual optional projection of $A^i$.
Let us denote by $Z^i$ and $\wt Z^i$ the supermartingales associated with $\tau_i$.
Then, the following relations hold.

\begin{proposition}
\label{supermartingales}
Let $\tau$ be a random time and $(\tau_1,\tau_2)$ its thin-thick decomposition.
(a) The supermartingales $Z$ and $\widetilde Z$ can be decomposed in terms of
the supermartingales $Z^{1}$, $Z^{2}$ and $\widetilde Z^{1}$, $\widetilde Z^{2}$ as:
\[
Z=Z^{1}+Z^{2}-1 \quad \textrm{and} \quad
\widetilde Z=\widetilde Z^{1}+\widetilde Z^{2}-1.
\]
(b) The dual optional projection $A^o$ can be decomposed as $A^o=A^{1,o}+A^{2,o}$.\\
(c) The dual predictable projection $A^p$ can be decomposed as $A^p=A^{1,p}+A^{2,p}$.
\end{proposition}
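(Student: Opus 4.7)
The plan is to reduce all three identities to the single pointwise identity $A = A^1 + A^2$. The defining properties $\tau = \tau_1 \wedge \tau_2$ and $\tau_1 \vee \tau_2 = \infty$ force exactly one of $\tau_1, \tau_2$ to be finite on $\{\tau < \infty\}$, so a three-case check (each $\tau_i$ finite with the other equal to $\infty$, plus the case $\tau_1 = \tau_2 = \infty$, the remaining case being excluded by $\tau_1 \vee \tau_2 = \infty$) gives
$$\id_{\{\tau \leq t\}} = \id_{\{\tau_1 \leq t\}} + \id_{\{\tau_2 \leq t\}}$$
pointwise, i.e.\ $A = A^1 + A^2$. No inclusion-exclusion cross term appears precisely because the joint event $\{\tau_1 < \infty,\ \tau_2 < \infty\}$ is empty. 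Taking left limits also yields $A_- = A^1_- + A^2_-$.

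For part (a), I would rewrite $1 - A = (1 - A^1) + (1 - A^2) - 1$ and apply the optional projection. Linearity of the projection, together with the fact that the optional projection of the constant process $1$ is itself, gives $Z = Z^{1} + Z^{2} - 1$, and the analogous rewriting of $1 - A_-$ yields $\wt Z = \wt Z^{1} + \wt Z^{2} - 1$.

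Parts (b) and (c) follow immediately by linearity of the dual optional projection (respectively the dual predictable projection) applied to the identity $A = A^{1} + A^{2}$.

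There is no real obstacle in this argument: the only substantive point is the disjointness encoded in $\tau_1 \vee \tau_2 = \infty$, which is exactly what makes the indicator of $\{\tau_1 \wedge \tau_2 \leq t\}$ split additively rather than subject to inclusion-exclusion; once this is observed, everything else is linearity of a projection.
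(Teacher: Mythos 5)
Your proposal is correct and is essentially the paper's own argument: the authors likewise derive everything from the identity $\id_{\Rbrack \tau_1,\infty\Rbrack}+\id_{\Rbrack \tau_2,\infty\Rbrack}=\id_{\Rbrack \tau,\infty\Rbrack}$, which holds because $\tau_1\lor\tau_2=\infty$, and then invoke linearity of the (dual) projections. You have simply spelled out the case analysis behind that identity in more detail.
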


\begin{proof}
The result follows from $\id_{\Rbrack \tau_1,\infty \Rbrack}+\id_{\Rbrack \tau_2,\infty \Rbrack}=\id_{\Rbrack \tau,\infty \Rbrack}$ which holds since $\tau_1\lor\tau_2=\infty$.
\end{proof}

\begin{lemma}
\label{l:59}
Let $\tau$ be a random time and $(\tau_1,\tau_2)$ its thin-thick decomposition.
Let $(T_n)_{n\geq 0}$ be an $\ff$-exhausting sequence, $(C_n)_{n\geq 0}$ an $\ff$-partition and $(z^n)_{n\geq 0}$ an $\ff$-martingale family of the $\ff$-thin time $\tau_1$.
Then, for any $t\geq 0$,
\begin{align*}
\P( C_n\vert \F^{\tau_2}_t )&=\id_{\{t<\tau_2 \}}
 \frac {z^n_t }{Z^2_t}\quad \textrm{for all} \quad n\geq 0\,,\\
\P(\tau_1>t|\F^{\tau_2}_t)&=1-\id_{\{t<\tau_2\}}
\frac{1-Z^1_t}{Z^2_t},\\
\P(\tau_2>t|\F^{\tau_1}_t)&=1-\id_{\{t<\tau_1\}}
\frac{1-Z^2_t}{Z^1_t}\,.
\end{align*}
\end{lemma}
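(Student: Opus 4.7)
The plan is to deduce all three identities from the classical key lemma of enlargement of filtration theory (see, e.g., \cite[Section 3.1.1]{bjr:intro} or \cite{elliott}) applied to either $\tau_2$ or $\tau_1$, the essential leverage being the defining property $\tau_1\lor\tau_2=\infty$ of the thin-thick decomposition. That property furnishes the complementary inclusions
$$\{\tau_1<\infty\}\subset\{\tau_2=\infty\}\quad\textrm{and}\quad\{\tau_2<\infty\}\subset\{\tau_1=\infty\},$$
which are the only nontrivial ingredient beyond the key lemma itself. Recall that the key lemma asserts that for any $\G$-measurable integrable $Y$ and any random time $\sigma$ with Az\'ema supermartingale $Z^\sigma$, one has $\id_{\{t<\sigma\}}\E[Y|\F^\sigma_t]=\id_{\{t<\sigma\}}\E[Y\id_{\{t<\sigma\}}|\F_t]/Z^\sigma_t$.

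For the first identity, the above inclusion yields $C_n\subset\{\tau_2=\infty\}\subset\{t<\tau_2\}$ for $n\geq 1$, so $\id_{C_n}\id_{\{t<\tau_2\}}=\id_{C_n}$, and applying the key lemma with $\sigma=\tau_2$ and $Y=\id_{C_n}$ delivers the claim on $\{t<\tau_2\}$; on $\{t\geq\tau_2\}$ both sides vanish because $\tau_2<\infty$ forces $\tau_1=\infty$ and thus $C_n$ fails. For the second identity, the same inclusion gives $\{\tau_1\leq t\}\subset\{t<\tau_2\}$, so the key lemma applied to $Y=\id_{\{\tau_1\leq t\}}$ yields $\id_{\{t<\tau_2\}}\P(\tau_1\leq t|\F^{\tau_2}_t)=\id_{\{t<\tau_2\}}(1-Z^1_t)/Z^2_t$, while on $\{t\geq\tau_2\}$ we have $\tau_1=\infty>t$ so $\P(\tau_1>t|\F^{\tau_2}_t)=1$; combining the two cases produces the stated formula. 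The third identity follows by exchanging the roles of $\tau_1$ and $\tau_2$ in the same argument, invoking the key lemma with $\sigma=\tau_1$ and using $\{\tau_2\leq t\}\subset\{\tau_1=\infty\}\subset\{t<\tau_1\}$.

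The main obstacle is conceptual rather than technical: one must recognize that the thin-thick splitting makes the pre-$\tau_2$ information about $\tau_1$ essentially $\F_t$-measurable (and symmetrically), so that the conditioning collapses via the key lemma to a plain $\F_t$-computation. The only care required is to treat $\{t<\tau_i\}$ and $\{t\geq\tau_i\}$ separately, since the key lemma directly controls only the former, with the latter handled by the observation that $\tau_i<\infty$ forces $\tau_{3-i}=\infty$.
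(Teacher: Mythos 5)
Your proof is correct and follows essentially the same route as the paper's: split on $\{t<\tau_2\}$ versus $\{\tau_2\le t\}$, apply the key lemma on the former, and use $\tau_1\lor\tau_2=\infty$ to dispose of the latter (the paper phrases the post-$\tau_2$ part as conditioning on $\F_t\lor\sigma(\tau_2)$, which amounts to the same thing, and obtains $1-Z^1_t$ via $Z_t=Z^1_t+Z^2_t-1$ rather than via your inclusion $\{\tau_1\le t\}\subset\{t<\tau_2\}$). One caveat, shared with the paper's own argument: your proof of the first identity only covers $n\ge 1$, since $C_0=\{\tau_1=\infty\}$ is not contained in $\{\tau_2=\infty\}$, and indeed the displayed formula fails for $n=0$ on $\{\tau_2\le t\}$, where the left-hand side equals $1$ rather than $0$.
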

\begin{proof}
Let us compute $\P(C_n|\F^{\tau_2}_t)$ on the two sets before
$\tau_2$ and after $\tau_2$ separately as follows:
$$\P(C_n|\F^{\tau_2}_t)= \id_{\{\tau_2\leq t \}}\P( C_n\vert \F _t  \vee \sigma (\tau_2))+
\id_{\{t<\tau_2 \}} \frac{\P( C_n\cap\{t<\tau_2\}\vert \F _t )}{\P( t<\tau_2\vert \F _t) }\,.$$
Then, taking into account that $C_n=\{\tau_1=T_n<\infty\}$ and $\tau_1 \vee
 \tau_2=\infty$, one has on the one hand
 $$\id_{\{\tau_2\leq t\}} \P(C_n
\vert \F_t \vee \sigma (\tau_2))= \P(\tau_1=T_n<\infty, \tau_2\leq t\vert
\F_t \vee \sigma (\tau_2))= 0\,,$$
and, on the other hand,
$$\P( C_n\cap\{t<\tau_2\}\vert \F _t )=\P( C_n\vert \F _t )=z^n_t.$$
Therefore, the first equality holds.

By symmetry we only prove the second identity and we skip the third one.
Similarly as before we compute $\P(\tau_1>t|\F^{\tau_2}_t)$ on the two sets before
$\tau_2$ and after $\tau_2$ separately as follows:
$$\P(\tau_1>t|\F^{\tau_2}_t)= \id_{\{\tau_2\leq t\}} \P(\tau_1>t
\vert \F_t \vee \sigma (\tau_2))+\id _{\{t<\tau_2\}}
 \frac{\P(\tau_2>t,\tau_1>t\vert \F_t)}{\P(\tau_2>t \vert \F_t)}\,.$$
Then, on the one hand, taking into account that $\tau_1 \vee
 \tau_2=\infty$,
 $$\id_{\{\tau_2\leq t\}} \P(\tau_1>t
\vert \F_t \vee \sigma (\tau_2))= \P(\tau_1>t \geq \tau_2 \vert
\F_t \vee \sigma (\tau_2))= \P(  t\geq \tau_2 \vert \F_t \vee
\sigma (\tau_2))= \id_{\{\tau_2\leq t\}}\,,$$
and, on the other hand, by $\tau_1\land\tau_2=\tau$ and Proposition \ref{supermartingales} (a),
$$\P(\tau_1>t,\tau_2>t\vert \F_t)=\P(\tau>t \vert
\F_t)=Z_t=Z^1_t+Z^2_t-1.$$
Therefore,
$$\P(\tau_1>t|\F^{\tau_2}_t)=\id_{\{\tau_1\leq t\}}+\id_
{\{t<\tau_2\}}\frac{Z^1_t+Z^2_t-1}{Z^2_t}$$ and the result
follows.
\end{proof}

The following proposition we study the condition $A^o=A^p$. We have seen already that, if either $\tau$ avoids $\ff$ stopping times or all $\ff$-martingales are continuous, then this condition holds. 
\begin{proposition} 
\label{pro:oa}
The condition $A^o=A^p$ holds if and only if the random time $\tau$ satisfies that:
\begin{enumerate}
\item 
$\P(\tau=T<\infty)=0$ for any $\ff$-totally inaccessible stopping time $T$,
\item $\P(\tau=S<\infty|\F_S)=\P(\tau=S<\infty|\F_{S-})$ for any $\ff$-predictable stopping time $S$.
\end{enumerate}
\end{proposition}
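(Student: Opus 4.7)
The plan is to realise $A^o-A^p$ as a finite variation martingale, and thereby reduce $A^o=A^p$ to a jump-matching condition at every predictable and every totally inaccessible $\ff$-stopping time; conditions (1) and (2) will then be seen to be precisely that matching.

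First I would verify that $N:=A^o-A^p$ is a uniformly integrable martingale. Applying the defining identities of the dual optional and dual predictable projections to $H=\id_F\id_{\Lbrack s,t\Lbrack}$ with $s\leq t$ and $F\in\F_s$, which is at once optional and predictable, yields
\[
\E[\id_F(A^o_t-A^o_s)]=\E[\id_F(A_t-A_s)]=\E[\id_F(A^p_t-A^p_s)],
\]
whence $\E[N_t-N_s\vert\F_s]=0$; integrability is clear from $\E[A^o_\infty]=\E[A^p_\infty]=\P(\tau<\infty)\leq 1$. As $N$ has integrable variation, it is purely discontinuous, and so $N\equiv 0$ is equivalent to $\Delta N\equiv 0$, which by the optional section theorem is in turn equivalent to $\Delta N_R\id_{\{R<\infty\}}=0$ a.s.\ for every $\ff$-stopping time $R$.

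Next I would use the standard decomposition of a stopping time into an accessible and a totally inaccessible part, together with the fact that an accessible time's graph is exhausted by countably many graphs of predictable times, to reduce to checking $\Delta N=0$ at predictable $S$ and at totally inaccessible $T$. The jumps themselves I would compute by plugging $H=\id_F\id_{\graph R}$ into the dual-projection identities: for an arbitrary $\ff$-stopping time $T$ and $F\in\F_T$, optionality of $H$ together with $\F_T$-measurability of $\Delta A^o_T$ gives
\[
\Delta A^o_T\id_{\{T<\infty\}}=\P(\tau=T<\infty\vert\F_T)\quad\textrm{a.s.,}
\]
while for a predictable $\ff$-stopping time $S$ and $F\in\F_{S-}$, predictability of $H$ together with $\F_{S-}$-measurability of $\Delta A^p_S$ gives
\[
\Delta A^p_S\id_{\{S<\infty\}}=\P(\tau=S<\infty\vert\F_{S-})\quad\textrm{a.s.;}
\]
finally, $\Delta A^p_T\id_{\{T<\infty\}}=0$ a.s.\ at a totally inaccessible $T$, as predictable processes do not jump there.

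Combining these identifications, the vanishing of $\Delta N_T$ at a totally inaccessible $T$ reads $\P(\tau=T<\infty\vert\F_T)=0$ a.s., which (being a non-negative random variable with vanishing expectation) is equivalent to $\P(\tau=T<\infty)=0$; this is (1). The vanishing of $\Delta N_S$ at a predictable $S$ reads $\P(\tau=S<\infty\vert\F_S)=\P(\tau=S<\infty\vert\F_{S-})$ a.s., which is (2). Both implications then follow. The main delicate point I anticipate is making the jump identifications rigorous — in particular the $\F_T$-measurability of $\Delta A^o_T$ and the $\F_{S-}$-measurability of $\Delta A^p_S$, so that sectioning by $F$ pins down the jumps as the claimed conditional probabilities — but this is a routine application of the section theorems and of the very definitions of the two dual projections.
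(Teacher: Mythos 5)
Your argument is correct, but it follows a genuinely different route from the paper's. You work directly with the finite-variation martingale $N:=A^o-A^p$ (indeed $A^p=(A^o)^p$, so $A^p$ is the compensator of the adapted, increasing, integrable process $A^o$): being purely discontinuous with $N_0=0$, it vanishes iff its jumps do, and the optional section theorem together with the accessible/totally-inaccessible decomposition of a stopping time reduces everything to the jump identities $\Delta A^o_T\id_{\{T<\infty\}}=\P(\tau=T<\infty\,|\,\F_T)$ for arbitrary $T$ and $\Delta A^p_S\id_{\{S<\infty\}}=\P(\tau=S<\infty\,|\,\F_{S-})$ for predictable $S$. The paper instead first invokes the thin--thick decomposition (Proposition \ref{supermartingales} and Theorem \ref{A0cont}) to discard the thick part, on which $A^o=A^p$ holds automatically, then picks an exhausting sequence of the thin part consisting only of predictable and totally inaccessible times and computes $\bigl(z^n_{S_n}\id_{\Rbrack S_n,\infty\Rbrack}\bigr)^p$ term by term via Proposition \ref{positive}(c). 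The underlying jump computations are the same; your version is self-contained within the general theory of processes and does not rely on the paper's structure theory for thin times, while the paper's is shorter given its earlier results and makes explicit why only the thin component matters. Two small points to tidy: the test process for the martingale property of $N$ should be $\id_F\id_{(s,t]}$ with $F\in\F_s$ (your half-open interval yields left limits of the increments), and you should record $N_0=A^o_0-A^p_0=0$ before concluding that a jump-free, purely discontinuous martingale is identically zero.
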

\begin{proof}
By Proposition \ref{supermartingales} and Theorem \ref{A0cont} it is enough to assume that $\tau$ is a thin time. 
We choose the exhausting sequence $(S_n)_{n\geq 1}$ so that it only contains totally inaccessible or predictable stopping times. Then, by Proposition \ref{positive} (c) and by the fact that $A^p=(A^o)^p$ we conclude that $A^o=A^p$ is equivalent to $z^n_{S_n}\id_{\Rbrack S_n, \infty\Rbrack}=\left(z^n_{S_n}\id_{\Rbrack S_n, \infty\Rbrack}\right)^p$ for each $n\geq 1$. 
If $S_n$ is totally inaccessible then the latter condition is equivalent to $z^n=0$ which is the condition (1). 
If $S_n$ is predictable then $\left(z^n_{S_n}\id_{\Rbrack S_n, \infty\Rbrack}\right)^p=\, ^pz^n_{S_n}\id_{\Rbrack S_n, \infty\Rbrack}$ which boils down to the condition (2).
\end{proof}
\begin{remark}
The condition (2) from Proposition \ref{pro:oa} is always satisfied in quasi-left continuous filtrations since then $\F_S=\F_{S-}$ for $\ff$-predictable stopping time $S$.
Therefore, if $\ff$ is a natural filtration of a Poisson process with jump times $(T_n)_{n\geq 1}$, which is quasi-left continuous, the condition $A^o=A^p$ holds if and only if $\P(\tau=T_n<\infty)=0$ for any $n$ since any $\ff$-totally inaccessible stopping time $T$ satisfies $[\![T]\!]\subset \bigcup_n [\![T_n]\!]$.
One can  find such an example in \cite[Proposition 4]{ACDJ_pekin}.
\end{remark}

\begin{remark}
(a) Since $\tau_1$ is an $\ff^\tau$-stopping time, it can be decomposed into $\ff^\tau$-accessible and $\ff^\tau$-totally inaccessible parts.
Thus, we can consider the decomposition of $\tau$ into three parts as:
\[
\tau^i_1=\tau_{\{\Delta A^o_\tau >0 , \  \Delta A^p_\tau=0\}}, \quad
\tau^a_1=\tau_{\{\Delta A^o_\tau >0, \ \Delta A^p_\tau>0\}} \quad \textrm{and} \quad \tau_2=\tau_{\{\Delta A^o_\tau =0\}}.
\]
Since $\tau_2$ is $\ff^\tau$-totally inaccessible, it follows that
$\tau^i_1\land\tau_2$ is the $\ff^\tau$-totally inaccessible part
and $\tau^a_1$ is the $\ff^\tau$-accessible part of the
$\ff^\tau$-stopping time $\tau$. Results of a similar type can be found in \cite{delia}  and \cite[p.65]{j}. We note that $\tau$ is
an $\ff^\tau$-predictable stopping time if and only if $\tau$ is
an $\ff$-predictable stopping time.\\
(b) Assume that the $\ff^\tau$-accessible stopping time $\tau^a_1$
is not an $\ff$-stopping time. Then, the filtration $\ff^\tau$ is not quasi-left continuous. This provides a systemic way to construct examples of
non quasi-left continuous filtrations.
\end{remark}

\subsection{The hypothesis $({\mathcal H}^\prime)$ for a random time}
\label{generalH}

We study here the hypothesis $({\mathcal H}^\prime)$ in the
progressive enlargement of filtration in  connection to the
thin-thick decomposition of the random time.
Let  $(\tau_1,
\tau_2)$ be the thin-thick de\-com\-po\-si\-tion of  a random time $\tau$. We define  three enlarged
filtrations $\ff^{\tau_1}:=(\F^{\tau_1}_t)_{t\geq 0}$,
$\ff^{\tau_2}:=(\F^{\tau_2}_t)_{t\geq 0}$ and $\ff^{\tau_1,
\tau_2}:=(\F^{\tau_1, \tau_2}_t)_{t\geq 0}$ as
\begin{align*}
\F^{\tau_i}_t:&=\bigcap_{s>t}\F_s\lor \sigma(\tau_i\land s)\quad\textrm{for} \quad i=1,2 \\
\F^{\tau_1, \tau_2}_t:&=\bigcap_{s>t}\F_s\lor \sigma(\tau_1\land s)\lor \sigma(\tau_2\land s).
\end{align*}
Clearly, $\ff\subset\ff^{\tau_i}\subset \ff^{\tau_1,
\tau_2}=(\ff^{\tau_1})^{ \tau_2}=(\ff^{\tau_2})^{ \tau_1} $ for
$i=1,2$.
\begin{theorem}
\label{Hp:thth}
Let $\tau$ be a random time and $(\tau_1,\tau_2)$ its thin-thick decomposition.
Then, $\ff^\tau = \ff^{\tau_1,\tau_2}$. Furthermore,  the hypothesis $({\mathcal H}^\prime)$ is satisfied for $(\ff, \ff^\tau)$ if and only if the hypothesis $({\mathcal H}^\prime)$ is satisfied for $(\ff, \ff^{\tau_2})$.
\end{theorem}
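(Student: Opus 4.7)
The plan is to handle the two assertions in turn, first establishing the equality of filtrations $\ff^\tau = \ff^{\tau_1,\tau_2}$ and then the equivalence of the hypothesis $(\mathcal H^\prime)$.

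For the filtration identity, the inclusion $\ff^\tau \subset \ff^{\tau_1,\tau_2}$ is immediate from $\tau = \tau_1\wedge \tau_2$, since $\tau$ is an $\ff^{\tau_1,\tau_2}$-stopping time and $\ff^\tau$ is the smallest right-continuous filtration containing $\ff$ that makes $\tau$ a stopping time. For the reverse inclusion I would show that both $\tau_1$ and $\tau_2$ are $\ff^\tau$-stopping times. Let $(T_n)_{n\geq 1}$ be an $\ff$-exhausting sequence of $\tau_1$, for instance the one exhausting the jumps of $A^o$ as in the proof of Theorem \ref{A0decom}. Since $\tau$ is an $\ff^\tau$-stopping time and each $T_n$ is an $\ff$-, hence $\ff^\tau$-, stopping time, the set $\{\tau = T_n\}$ lies in $\F^\tau_{\tau\wedge T_n}$. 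Therefore
\begin{equation*}
\{\tau_1 \leq t\} \;=\; \bigcup_{n\geq 1}\bigl(\{\tau = T_n\}\cap\{T_n\leq t\}\bigr)\;\in\; \F^\tau_t,
\end{equation*}
so $\tau_1$ is an $\ff^\tau$-stopping time. Using $\tau_1\wedge\tau_2=\tau$ and $\tau_1\vee\tau_2=\infty$, one obtains the disjoint decomposition $\{\tau\leq t\}=\{\tau_1\leq t\}\sqcup\{\tau_2\leq t\}$, and the identity $\{\tau_2\leq t\}=\{\tau\leq t\}\setminus\{\tau_1\leq t\}$ then shows that $\tau_2$ is an $\ff^\tau$-stopping time as well.

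The second assertion splits into two implications. The forward direction is an application of Stricker's theorem: if $X$ is an $\ff$-martingale and an $\ff^\tau$-semimartingale, then, being $\ff$-adapted hence $\ff^{\tau_2}$-adapted, and since $\ff^{\tau_2}\subset \ff^\tau$, it is an $\ff^{\tau_2}$-semimartingale. For the reverse direction I would use Remark \ref{r:stability}(b), which ensures that $\tau_1$ remains a thin time in the enlarged filtration $\ff^{\tau_2}$. Applying Theorem \ref{decomposition1} to the $\ff^{\tau_2}$-thin time $\tau_1$ yields that $(\mathcal H^\prime)$ holds for the pair $(\ff^{\tau_2},(\ff^{\tau_2})^{\tau_1})$. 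Since $(\ff^{\tau_2})^{\tau_1}=\ff^{\tau_1,\tau_2}=\ff^\tau$ by the first part, composition of the two $(\mathcal H^\prime)$ steps closes the loop: an $\ff$-martingale becomes an $\ff^{\tau_2}$-semimartingale by assumption, and then an $\ff^\tau$-semimartingale by the thin-time result.

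The main obstacle is the measurability argument in the first part, namely that $\tau_1$ is an $\ff^\tau$-stopping time. The subtlety is that $\tau_1$ is defined through the non-$\ff$-adapted event $\{\Delta A^o_\tau>0\}$; one must exploit the thin structure to replace this event by the countable union $\bigcup_n\{\tau=T_n\}$, and then use that $\tau$ and the $T_n$'s are all $\ff^\tau$-stopping times to control each piece. Once this filtration equality is in place, the $(\mathcal H^\prime)$ part is a routine assembly of Stricker's theorem and Theorem \ref{decomposition1} applied to $\tau_1$ inside $\ff^{\tau_2}$.
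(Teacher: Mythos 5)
Your proposal is correct and follows essentially the same route as the paper: Stricker's theorem for the forward implication, and Remark \ref{r:stability}(b) plus Theorem \ref{decomposition1} applied to the $\ff^{\tau_2}$-thin time $\tau_1$, combined with $(\ff^{\tau_2})^{\tau_1}=\ff^\tau$, for the converse. The only (immaterial) difference is in the inclusion $\ff^{\tau_1,\tau_2}\subset\ff^\tau$, where the paper observes directly that $\Delta A^o_\tau\in\F^\tau_\tau$ makes $\id_{\Rbrack\tau_i,\infty\Rbrack}$ $\ff^\tau$-adapted, while you unpack the same fact through the exhausting sequence via $\{\tau_1\leq t\}=\bigcup_n\bigl(\{\tau=T_n\}\cap\{T_n\leq t\}\bigr)$.
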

\begin{proof}
In a first step, we show that, for $i=1,2$:
\[
\ff\subset \ff^{\tau_i}\subset \ff^{\tau_1, \tau_2}=\ff^\tau.
\]
Let $A^o$ be the $\ff$-dual optional projection of $\tau$. Note
that
\[
\id_{\Rbrack \tau_1, \infty\Rbrack}=\id_{\Rbrack \tau, \infty\Rbrack}\id_{\{\Delta A^o_\tau>0\}} \quad \textrm{and} \quad
\id_{\Rbrack \tau_2, \infty\Rbrack}=\id_{\Rbrack \tau, \infty\Rbrack}\id_{\{\Delta A^o_\tau=0\}},
\]
thus, since $\Delta A^o_\tau\in \F^\tau_\tau$, the processes
$\id_{\Rbrack \tau_1, \infty\Rbrack}$ and $\id_{\Rbrack \tau_2,
\infty\Rbrack}$ are $\ff^\tau$-adapted which implies that
$\ff^{\tau_1, \tau_2}\subset \ff^\tau$. On the other hand, we have
\[
\id_{\Rbrack \tau_1, \infty\Rbrack}+\id_{\Rbrack \tau_2, \infty\Rbrack}=\id_{\Rbrack \tau, \infty\Rbrack}
\]
which implies that $\ff^{\tau_1, \tau_2}\supset \ff^\tau$.

In a second step, note that if an $\ff$-martingale is an
$\ff^\tau$-semimartingale, by Stricker's Theorem \cite[Theorem 4,
Chapter II, p. 53]{protter}, it is as well an
$\ff^{\tau_2}$-semimartingale. Thus the necessary condition
follows. Since $\tau_1$ is an $\ff$-thin time, it is an $\ff^{\tau_2}$-thin time and the equality
$\ff^{\tau_1, \tau_2}=\ff^\tau$ and Theorem \ref{decomposition1}
imply that the hypothesis $({\mathcal H}^\prime)$ is satisfied for
$(\ff^{\tau_2}, \ff^\tau)$. Thus the sufficient condition follows.
\end{proof}

In the following corollary we examine the hypothesis $({\mathcal H}^\prime)$ a minimum of a thin time and random times satisfying the hypothesis $({\mathcal H}^\prime)$, namely honest times and times satisfying Jacod's absolute continuity condition (see \cite[Chapter 5]{j} and  \cite{jeanblanc2009progressive, jacod1985grossissement} respectively).
\begin{corollary}
Let $\tau$ be a thin time and $\sigma$ be an honest time or satisfies Jacod's absolute continuity condition. Then, the hypothesis $({\mathcal H}^\prime)$ is satisfied for $(\ff, \ff^{\tau\land\sigma})$
\end{corollary}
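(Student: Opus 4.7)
The plan is to build the enlargement $\ff^{\tau\wedge\sigma}$ by going through the larger filtration $\ff^{\sigma,\tau}:=(\ff^\sigma)^\tau$ and then to descend via Stricker's Theorem. More precisely, I exploit the chain
\[
\ff\ \subset\ \ff^\sigma\ \subset\ \ff^{\sigma,\tau}\ \supset\ \ff^{\tau\wedge\sigma},
\]
where the hypothesis on $\sigma$ gives $(\mathcal H')$ along the first inclusion and the thinness of $\tau$ gives it along the second.

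First, by the assumption on $\sigma$, the hypothesis $(\mathcal H')$ holds for $(\ff,\ff^\sigma)$: for an honest time this is the classical result of Jeulin \cite[Chapter 5]{j}, while for a time satisfying Jacod's absolute continuity condition it is the result of Jeanblanc and Le Cam \cite{jeanblanc2009progressive}. Next, by Remark \ref{r:stability}(b), the $\ff$-thin time $\tau$ is still thin with respect to the enlarged reference filtration $\ff^\sigma$. Applying Theorem \ref{decomposition1} with $\ff^\sigma$ as the reference filtration and $\tau$ as the thin time then yields $(\mathcal H')$ for $(\ff^\sigma,(\ff^\sigma)^\tau)$. Composing these two statements through Definition \ref{Hp}(a) (any $\ff$-semimartingale is an $\ff^\sigma$-semimartingale, hence an $(\ff^\sigma)^\tau$-semimartingale) gives $(\mathcal H')$ for $(\ff,\ff^{\sigma,\tau})$, upon the standard identification $(\ff^\sigma)^\tau=\ff^{\sigma,\tau}$ following directly from \eqref{Ftau}, analogous to the identity $(\ff^{\tau_1})^{\tau_2}=\ff^{\tau_1,\tau_2}$ already used in Section \ref{generalH}.

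Finally, since $\sigma$ and $\tau$ are both stopping times in $\ff^{\sigma,\tau}$, so is $\tau\wedge\sigma$, which gives $\ff^{\tau\wedge\sigma}\subset\ff^{\sigma,\tau}$. For any $\ff$-martingale $X$, which is automatically $\ff^{\tau\wedge\sigma}$-adapted, the $\ff^{\sigma,\tau}$-semimartingale property obtained in the previous step combined with Stricker's Theorem \cite[Theorem 4, Chapter II]{protter} shows that $X$ is an $\ff^{\tau\wedge\sigma}$-semimartingale, which is exactly $(\mathcal H')$ for $(\ff,\ff^{\tau\wedge\sigma})$. I do not expect any serious obstacle; the argument is essentially a soft assembly of Theorem \ref{decomposition1}, the stability Remark \ref{r:stability}(b), and Stricker's Theorem, with the only minor point being the identification of the iterated progressive enlargement with the joint one.
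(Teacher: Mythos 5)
Your argument is correct, but it follows a genuinely different route from the paper's. Both proofs open the same way, by quoting Jeulin's theorem (honest times) and Jeanblanc--Le Cam (Jacod's condition) to get $(\mathcal H')$ for $(\ff,\ff^\sigma)$. From there the paper stays inside the thin-thick decomposition framework of Section \ref{s:decomp}: it takes the decomposition $(\sigma_1,\sigma_2)$ of $\sigma$, observes that $\tau\wedge\sigma_1$ is thin (Remark \ref{r:stability}) and that $(\tau\wedge\sigma_1,\sigma_2)$ decomposes $\tau\wedge\sigma$, and then applies Theorem \ref{Hp:thth} twice so that both ``$(\mathcal H')$ for $(\ff,\ff^\sigma)$'' and ``$(\mathcal H')$ for $(\ff,\ff^{\tau\wedge\sigma})$'' become equivalent to the single statement ``$(\mathcal H')$ for $(\ff,\ff^{\sigma_2})$''. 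You instead climb up to the iterated enlargement $(\ff^\sigma)^\tau$ --- using Remark \ref{r:stability}(b) to keep $\tau$ thin over $\ff^\sigma$ and Theorem \ref{decomposition1} with $\ff^\sigma$ as reference --- chain the two instances of $(\mathcal H')$ by transitivity, and descend to $\ff^{\tau\wedge\sigma}$ via Stricker's Theorem. Each step of yours is sound: the inclusion $\ff^{\tau\wedge\sigma}\subset(\ff^\sigma)^\tau$ holds because $\tau\wedge\sigma$ is a stopping time there, and the usual-conditions requirement for the reference filtration in Theorem \ref{decomposition1} is met by $\ff^\sigma$. Your route even sidesteps the one delicate point of the paper's proof, namely verifying that $(\tau\wedge\sigma_1,\sigma_2)$ satisfies the condition $\rho_1\vee\rho_2=\infty$ of Definition \ref{basic}; what the paper's route buys in exchange is the sharper structural information that the thick part $\sigma_2$ is the sole obstruction to $(\mathcal H')$ for the minimum.
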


\begin{proof}
First we recall that, if $\sigma$ is honest then the hypothesis $({\mathcal H}^\prime)$ is satisfied for $(\ff, \ff^{\sigma})$ by \cite[Theorem (5,10)]{j}, and if $\sigma$ satisfies Jacod's absolute continuity condition then $({\mathcal H}^\prime)$ is satisfied for $(\ff, \ff^{\sigma})$ by \cite[Theorem 3.1]{jeanblanc2009progressive}.

Let $(\sigma_1, \sigma_2)$ be a thin-thick decomposition of $\sigma$. Then, by Remark \ref{r:stability}, $\tau\land\sigma_1$ is a thin time and $(\tau\land\sigma_1, \sigma_2)$ is a thin-thick decomposition of $\tau\land \sigma.$
Then the statement of the corollary follows by applying twice Theorem \ref{Hp:thth}.
\end{proof}

\begin{proposition}
Let $\tau$ be a random time and $(\tau_1,\tau_2)$ its thin-thick decomposition.
Let $(T_n)_{n\geq 0}$ be an $\ff$-exhausting sequence, $(C_n)_{n\geq 0}$ an $\ff$-partition and $(z^n)_{n\geq 0}$ an $\ff$-martingale family of $\ff$-thin time $\tau_1$.
Assume that for an $\ff$-martingale $X$, there
exists an $\ff$-predictable finite variation process  $\Gamma (X)$ such that $X= \widetilde X+
\Gamma (X)$ where $\widetilde X$ is an
$\ff^{\tau_2}$-martingale. Then,
\begin{eqnarray*} X_t&=& \widehat X_t+ \Gamma (X)_t+ \int_0^{t\land
\tau}\frac{1}{Z_{s-}}d{\cro{ \widetilde X,
\widetilde m}}^{\ff^{\tau_2}}_s+\sum_{n=1}^\infty \id_{C_n} \int_0^t
\id_{\{s>T_n\}}\frac{1}{\widetilde z^{n}_{s-}}d{\cro{\widetilde X,\widetilde z^{n}}}^{\ff^{\tau_2}}_s, \end{eqnarray*}
 where $\widehat X$ is an $\ff^\tau$-martingale, $ \widetilde z^{n}_t:=\P( C_n\vert \F^{\tau_2}_t)=\id_{\{t<\tau_2 \}}
 \frac {z^n_t }{Z^2_t}$ and $\widetilde m_t=\sum_n \widetilde z^{n}_{t\wedge T_n}\,.$
 \end{proposition}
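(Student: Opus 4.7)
The plan is to perform the enlargement in two stages: first from $\ff$ to $\ff^{\tau_2}$, and then from $\ff^{\tau_2}$ to $\ff^\tau$ by adjoining the thin time $\tau_1$. The factorisation $\ff^\tau = \ff^{\tau_1,\tau_2} = (\ff^{\tau_2})^{\tau_1}$ is exactly Theorem \ref{Hp:thth}. The hypothesis handles the first stage: it gives $X = \widetilde X + \Gamma(X)$ with $\widetilde X$ an $\ff^{\tau_2}$-martingale, and $\Gamma(X)$ is an $\ff$-predictable (a fortiori $\ff^\tau$-predictable) finite variation process that carries through unchanged into the final decomposition.

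For the second stage, Remark \ref{r:stability}(b) shows that $\tau_1$ is also a thin time in $\ff^{\tau_2}$, with the same exhausting sequence $(T_n)_{n\geq 0}$ and partition $(C_n)_{n\geq 0}$. Lemma \ref{l:59} identifies its martingale family in $\ff^{\tau_2}$ as $\widetilde z^n_t = \P(C_n\vert\F^{\tau_2}_t) = \id_{\{t<\tau_2\}}z^n_t/Z^2_t$, and Proposition \ref{positive}(c) applied inside $\ff^{\tau_2}$ then produces the Azéma supermartingale $\widetilde Z$ of $\tau_1$ together with its martingale part $\widetilde m_t = \sum_n \widetilde z^n_{t\wedge T_n}$. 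Applying Theorem \ref{decomposition1} to the $\ff^{\tau_2}$-martingale $\widetilde X$ and the $\ff^{\tau_2}$-thin time $\tau_1$ (with $G=1$) then yields an $\ff^\tau$-local martingale $\widehat X$ together with the two expected stochastic integrals, against $d\cro{\widetilde X,\widetilde m}^{\ff^{\tau_2}}$ up to $t\wedge\tau_1$ with multiplier $1/\widetilde Z_{s-}$, and against $d\cro{\widetilde X,\widetilde z^n}^{\ff^{\tau_2}}$ with multiplier $1/\widetilde z^n_{s-}$ on the sets $C_n\cap\{s>T_n\}$. Summing with $\Gamma(X)$ already gives an $\ff^\tau$-semimartingale decomposition of $X$.

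To match the exact form stated in the proposition, two simplifications remain. First, since $\widetilde z^n_t=0$ for $t\geq\tau_2$, the martingale $\widetilde m$ is constant on $[\tau_2,\infty)$, so the $\ff^{\tau_2}$-predictable covariation $\cro{\widetilde X,\widetilde m}^{\ff^{\tau_2}}$ is supported by $[0,\tau_2]$; the upper limit $t\wedge\tau_1$ therefore collapses to $t\wedge\tau_1\wedge\tau_2 = t\wedge\tau$. Second, Lemma \ref{l:59} combined with Proposition \ref{supermartingales}(a) (which gives $Z=Z^1+Z^2-1$) yields the identity $Z^2_{s-}\widetilde Z_{s-}=Z_{s-}$ on $\{s\leq\tau_2\}$; together with the representation of $\widetilde m$ as $\widetilde Z$ plus the $\ff^{\tau_2}$-dual optional projection of $\id_{\Rbrack\tau_1,\infty\Rbrack}$, this converts the natural multiplier $1/\widetilde Z_{s-}=Z^2_{s-}/Z_{s-}$ into the stated $1/Z_{s-}$ after absorbing the factor $Z^2_{s-}$ into the bracket. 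The sum indexed by $n$ needs no further manipulation, as $\widetilde z^n$ is directly the $\ff^{\tau_2}$-martingale family of $\tau_1$.

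The main obstacle I expect is precisely this last simplification: one must carefully track how the predictable covariation computed in $\ff^{\tau_2}$ interacts with the factor $Z^2$ coming from the formula for $\widetilde z^n$, in order to justify rigorously the passage from the natural $1/\widetilde Z_{s-}$ to the cleaner $1/Z_{s-}$. All remaining steps are direct applications of Theorem \ref{Hp:thth}, Lemma \ref{l:59}, Proposition \ref{supermartingales}(a), Proposition \ref{positive}(c) and Theorem \ref{decomposition1}.
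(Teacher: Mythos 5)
Your proposal takes exactly the route of the paper's (very terse) proof: factor $\ff^\tau=(\ff^{\tau_2})^{\tau_1}$ via Theorem \ref{Hp:thth}, apply Theorem \ref{decomposition1} to the $\ff^{\tau_2}$-martingale $\widetilde X$ and the $\ff^{\tau_2}$-thin time $\tau_1$, and identify $\widetilde z^n$ and $\widetilde m$ through Lemma \ref{l:59} and Proposition \ref{positive}(c). Up to and including the collapse of the upper integration limit from $t\wedge\tau_1$ to $t\wedge\tau$ (correct, since $\widetilde m$ is constant on $\Lbrack \tau_2,\infty\Rbrack$), your argument is complete and matches the paper's.

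The one step that does not go through as written is the final ``simplification'' that you yourself flag as the main obstacle. A literal application of Theorem \ref{decomposition1} in $\ff^{\tau_2}$ produces the multiplier $1/\widetilde Z_{s-}$, where $\widetilde Z_t=\P(\tau_1>t\,\vert\,\F^{\tau_2}_t)$; by Lemma \ref{l:59} and $Z=Z^1+Z^2-1$ this equals $Z_{s-}/Z^2_{s-}$ on $\{s\le\tau_2\}$, so the stated coefficient $1/Z_{s-}$ differs from $1/\widetilde Z_{s-}=Z^2_{s-}/Z_{s-}$ by the factor $Z^2_{s-}$, which is not $1$ in general. ``Absorbing $Z^2_{s-}$ into the bracket'' would amount to replacing $\widetilde m$ by $Z^2_{-}\bigcdot\widetilde m$, which is not the process $\widetilde m=\sum_n\widetilde z^n_{\cdot\wedge T_n}$ fixed by the statement; to keep the bracket as stated you would need the identity $Z^2_{s-}\,d\cro{\widetilde X,\widetilde m}^{\ff^{\tau_2}}_s=d\cro{\widetilde X,\widetilde m}^{\ff^{\tau_2}}_s$ on $\Rbrack 0,\tau\Rbrack$, which you do not establish and which does not hold in general. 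To be fair, the paper's own two-sentence proof does not address this point either; the most plausible reading is that the $Z_{s-}$ in the displayed formula is the Az\'ema supermartingale of $\tau_1$ relative to $\ff^{\tau_2}$ (i.e.\ your $\widetilde Z_{s-}$), transcribed from Theorem \ref{decomposition1} with an overloaded symbol. So either prove the conversion identity explicitly, or state and prove the decomposition with $1/\widetilde Z_{s-}$ in place of $1/Z_{s-}$; as it stands this passage is asserted, not proved.
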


\begin{proof}
The decomposition of $X$ as an $\ff^\tau$-semimartingale follows by  $\ff^\tau = \ff^{\tau_1,\tau_2}$ and Theorem \ref{decomposition1} since $\tau_1$ is an $\ff^{\tau_2}$-thin time. 
Lemma \ref{l:59} and Proposition \ref{positive} imply the forms of $\widetilde z^n$ and $\widetilde m$.
\end{proof}

\subsection{Immersion for a random time}
Thin-thick decomposition finds application in studying immersion for a generic random time.
\begin{proposition} $\ff$ is immersed in $\ff^\tau$ if and only if
$\ff$ is immersed in $\ff^{\tau_1}$ and in $\ff^{\tau_2}$. In that
case, $\ff^{\tau_1}$ and  $\ff^{\tau_2}$
 are immersed in $\ff^\tau$.\end{proposition}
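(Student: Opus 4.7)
My plan is to base the proof on the Br\'emaud--Yor characterisation of immersion invoked just before Proposition \ref{H:thin}: $\ff\hookrightarrow\gg$ is equivalent to $\E[G|\F_t]=\E[G|\F_\infty]$ for every $t\geq 0$ and every $G\in L^1(\G_t)$. The forward implication of the equivalence will be essentially immediate, since by Theorem \ref{Hp:thth} one has $\F^{\tau_i}_t\subset\F^\tau_t$ for $i\in\{1,2\}$, so the Br\'emaud--Yor condition for $(\ff,\ff^\tau)$ descends to $(\ff,\ff^{\tau_i})$.

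For the reverse implication, the key structural input is $\tau_1\vee\tau_2=\infty$ a.s., combined with $\F^\tau_t=\F^{\tau_1,\tau_2}_t$ from Theorem \ref{Hp:thth}. A monotone class argument will reduce the Br\'emaud--Yor condition for $(\ff,\ff^\tau)$ to test elements $G=H\,f(\tau_1\wedge t)\,g(\tau_2\wedge t)$ with $H\in L^\infty(\F_t)$ and $f,g$ bounded Borel. I will then split
\begin{equation*}
f(\tau_1\wedge t)g(\tau_2\wedge t)=f(\tau_1)g(t)\id_{\tau_1\leq t}+f(t)g(\tau_2)\id_{\tau_2\leq t}+f(t)g(t)\id_{\tau>t}
\end{equation*}
into three pairwise disjoint contributions (the disjointness being a direct consequence of $\tau_1\vee\tau_2=\infty$) and also use the companion identity $\id_{\tau>t}=\id_{\tau_1>t}+\id_{\tau_2>t}-1$. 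The first two terms match termwise under $\E[\cdot|\F_t]$ versus $\E[\cdot|\F_\infty]$ by the individual Br\'emaud--Yor conditions for $\tau_1$ and $\tau_2$; the third matches via $Z_t=Z^1_t+Z^2_t-1$ from Proposition \ref{supermartingales}(a), together with the fact that each $Z^i_t$ equals both $\E[\id_{\tau_i>t}|\F_t]$ and $\E[\id_{\tau_i>t}|\F_\infty]$ under the corresponding individual immersion.

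For the last assertion, that $\ff^{\tau_1}$ and $\ff^{\tau_2}$ are immersed in $\ff^\tau$, I will invoke Theorem \ref{Hp:thth} to write $\ff^\tau=(\ff^{\tau_2})^{\tau_1}$, noting that $\tau_1$ remains a thin time for the larger filtration $\ff^{\tau_2}$ with the same exhausting sequence $(T_n)$ and partition $(C_n)$. Proposition \ref{H:thin}(c) then recasts $\ff^{\tau_2}\hookrightarrow\ff^\tau$ as the conditional independences $C_n\perp\F^{\tau_2}_\infty\mid\F^{\tau_2}_{T_n}$ for each $n\geq 1$. By Lemma \ref{l:59} both conditional probabilities of $C_n$ take the form $\id_{\cdot<\tau_2}\,z^n_\cdot/Z^2_\cdot$, while Proposition \ref{H:thin}(a) applied to $(\ff,\tau_1)$ yields $z^n_\infty=z^n_{T_n}$ thanks to the already-established $\ff\hookrightarrow\ff^{\tau_1}$. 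The claim for $\ff^{\tau_1}\hookrightarrow\ff^\tau$ follows by a symmetric argument, exchanging the roles of $\tau_1$ and $\tau_2$.

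The main obstacle I expect is closing this last step: after the reductions above, one is left with establishing the compatibility of the denominators $Z^2_{T_n}$ and $Z^2_\infty$ on the event $C_n\subset\{\tau_2=\infty\}$. This identity cannot rely on the two individual immersions alone and must exploit the full joint hypothesis $\ff\hookrightarrow\ff^\tau$; the natural route is to apply Br\'emaud--Yor at the stopping time $T_n$ to a suitable $G\in L^1(\F^\tau_{T_n})$ involving $Z^2_\infty-Z^2_{T_n}$, so as to transfer the required $Z^2$-compatibility onto $C_n$ using the optional sampling of the martingale $m^2$ from Proposition \ref{supermartingales}.
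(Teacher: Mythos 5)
Your proof of the equivalence itself is correct and close in spirit to the paper's. The forward direction is the same (immersion passes to intermediate filtrations $\ff\subset\ff^{\tau_i}\subset\ff^\tau$). For the reverse direction the paper is more economical: it invokes the standard characterization that $\ff$ is immersed in $\ff^{\tau}$ if and only if $Z_t=\P(\tau>t\,|\,\F_\infty)$ for all $t$, and then simply adds the two identities $Z^i_t=\P(\tau_i>t\,|\,\F_\infty)$ via $Z=Z^1+Z^2-1$ from Proposition \ref{supermartingales}. Your monotone-class expansion with test functions $H\,f(\tau_1\wedge t)\,g(\tau_2\wedge t)$ is a valid, if longer, way of re-deriving that reduction, and the three-term splitting on $\{\tau_1\leq t\}$, $\{\tau_2\leq t\}$, $\{\tau>t\}$ is legitimate because $\tau_1\vee\tau_2=\infty$.

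The genuine gap is in the final assertion. You correctly diagnose that the individual immersions do not suffice and that the joint hypothesis $\ff$ immersed in $\ff^\tau$ must be used, but the step you need --- reconciling $\id_{\{T_n<\tau_2\}}z^n_{T_n}/Z^2_{T_n}$ with $\id_{\{\tau_2=\infty\}}z^n_{\infty}/Z^2_{\infty}$ as an a.s.\ identity of random variables, not merely on $C_n$ --- is left as a programme rather than a proof, and the object ``$m^2$ from Proposition \ref{supermartingales}'' that you propose to use is not actually supplied by that proposition. The route through Proposition \ref{H:thin} applied to the pair $(\ff^{\tau_2},\tau_1)$ also forces you to control $\wt z^n_\infty=\P(C_n|\F^{\tau_2}_\infty)$, and $C_n$ need not belong to $\F^{\tau_2}_\infty$, which adds a further complication you do not address. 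The paper sidesteps all of this by working with the other enlargement order: it verifies the progressive-enlargement criterion for $\ff^{\tau_1}$ immersed in $(\ff^{\tau_1})^{\tau_2}=\ff^\tau$ directly, computing as in Lemma \ref{l:59}
\begin{equation*}
\P(\tau_2>t\,|\,\F^{\tau_1}_\infty)=\id_{\{\tau_1\leq t\}}+\id_{\{t<\tau_1\}}\,\frac{\P(\tau>t\,|\,\F_\infty)}{\P(\tau_1>t\,|\,\F_\infty)},
\end{equation*}
and then using the assumed immersion of $\ff$ in $\ff^\tau$ (hence in $\ff^{\tau_1}$) to replace both $\F_\infty$-conditionings by $\F_t$-conditionings, which identifies the right-hand side with $\P(\tau_2>t\,|\,\F^{\tau_1}_t)$; the claim for $\ff^{\tau_2}$ follows symmetrically. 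You should either adopt this argument or actually carry out the $Z^2$-compatibility step you only sketch; as written, the last assertion is not established.
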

 \begin{proof}
Since $\ff\subset \ff^{\tau_i}\subset \ff^\tau$, it is clear that, if $\ff$
is immersed in $\ff^\tau$, then $\ff$ is immersed in
$\ff^{\tau_1}$ and in $\ff^{\tau_2}$.

Let $\ff$ be immersed in $\ff^{\tau_1}$ and in $\ff^{\tau_2}$, i.e., $Z^i_t=\P(\tau_i>t|\F_\infty)$ for each $t\geq 0$, for $i=1,2$. Then, by Proposition \ref{supermartingales} (a),
$$ Z_t =Z^1_t+Z^2_t-1= \P(\tau_1>t \vert \F_\infty)+\P(\tau_2>t \vert
\F_\infty)-1=\P(\tau>t\vert \F_\infty)$$
and we conclude that $\ff$ is immersed $\ff^\tau$.

It remains to prove the last assertion. Let $\ff$ be immersed in $\ff^{\tau}$.
Then, using similar arguments as in the proof of Lemma \ref{l:59}, we obtain:
$$\P(\tau_2>t \vert \F^{\tau_1}_\infty)= \id_{\{ \tau_1 \leq t\}}+ \id_{\{t<\tau_1\}}
\frac{\P( \tau>t\vert \F_\infty)}{\P( \tau_1>t\vert \F_\infty)}
$$ and the assumed immersion yield to
$$\P(\tau_2>t|\F^{\tau_1}_\infty)=\id_{\{ \tau_1 \leq t\}}+ \id_{\{t<\tau_1\}}
\frac{\P( \tau>t\vert \F_t)}{\P( \tau_1>t\vert \F_t)}=
\P(\tau_2>t \vert \F_t^{\tau_1})\,.
$$
Therefore, $\ff^{\tau_1}$ is immersed in $\ff^\tau$. The same proof is valid for $\tau_2$.
\end{proof}

\begin{com}   
In  \cite{jl:ms},  the authors
introduce  a random time $\tau =\vartheta \wedge \xi$ where $\xi$
avoids $\ff$-stopping times, and is constructed as
$\xi=\inf\{t\,:\, \Lambda _t:=\int_0^t \lambda_sds \geq \Theta\}$
where $\lambda$ is a positive $\ff$-adapted process  and $\Theta$
is an exponential random variable independent from $\ff$, and
$\vartheta$ is an $\ff$-accessible stopping time. Therefore,   $\xi$ is thick  and
$\vartheta$ is thin. The thin-thick decomposition   $\tau= \tau_1 \wedge\tau_2$   can be obtained as follows:  $\tau_2= \vartheta \id_{\{ \vartheta <\xi\}}+\infty\id_{\{
\xi \leq \vartheta \}}$ and $\tau_1= \xi \id_{\{\xi
<\vartheta\}}+\infty\id_{\{\vartheta \leq \xi \}}$.  \\
  The authors   establish  immersion property   by checking $\P(\tau>t \vert \F_t)=\P(\tau>t \vert \F_\infty)$, a   characterisation of immersion that we have recalled above. From our result,
immersion holds  since $\ff$ is immersed in $\ff^\xi$, hence in   $\ff^1$, due to the property that $\vartheta$ is an $\ff$-stopping time.
\end{com}\section{Link between thin times and honest times}
 \label{s:honest}

 In this
section we restrict our attention to a special class of random
times, namely to honest times. We recall the definition below (see
\cite[p. 73]{j}) and some alternative characterizations in
Appendix \ref{honest_appendix}.
Honest times are well- studied class of time fo which, in particular, the hypothesis $({\mathcal H}^\prime)$ holds.

\begin{definition}
\label{defhonest} A random time $\tau$ is an $\ff$-honest time if
for every $t > 0$ there exists an $\F_t$-measurable random
variable $\tau_t$ such that $\tau=\tau_t$ on $\{\tau<t\}$.
Then, it is always possible to choose $\tau_t$ such that
$\tau_t\leq t$.
\end{definition}


\subsection{Fundamental properties}

Let us start with some characterisation and properties of (thin) honest times.

\begin{theorem}
\label{condZ1} (a) Let $(\tau_1 ,\tau_2)$ be the
thin-thick decomposition of $\tau$. Then, $\tau$ is honest if and only
if $\tau_1$ and $\tau_2$ are honest.\\ (b) A random time $\tau$
is a thick  honest time if and only if $Z _\tau=1$
a.s. on $\{\tau<\infty\}$. \\
(c) Let $\tau$ be an honest time with thin-thick decomposition $(\tau_1,\tau_2)$. Then,
$Z_\tau<1$ on $\{\tau=\tau_1<\infty\}$ and
$Z_\tau=1$ on $\{\tau=\tau_2<\infty\}$.
\end{theorem}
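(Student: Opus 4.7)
The plan is to build all three parts on the characterization (recalled in Appendix \ref{honest_appendix}) that a random time $\sigma$ is honest if and only if $\wt Z^\sigma_\sigma = 1$ a.s.\ on $\{\sigma < \infty\}$, combined with the jump relation $\Delta A^o = \wt Z - Z$ from \eqref{martm}, the equivalence of thickness with continuity of $A^o$ (Theorem \ref{A0cont}), and the supermartingale decomposition $\wt Z = \wt Z^{1} + \wt Z^{2} - 1$ from Proposition \ref{supermartingales}(a). Under this characterization, (b) is then immediate: thickness yields $A^o$ continuous, hence $\wt Z = Z$ everywhere, and so $\wt Z_\tau = 1$ on $\{\tau < \infty\}$ reduces to $Z_\tau = 1$ on $\{\tau < \infty\}$. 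For (c), honesty of $\tau$ gives $Z_\tau = \wt Z_\tau - \Delta A^o_\tau = 1 - \Delta A^o_\tau$ on $\{\tau < \infty\}$; splitting this set by the defining condition of the thin-thick decomposition produces $Z_\tau = 1$ on $\{\tau = \tau_2 < \infty\} = \{\tau < \infty,\ \Delta A^o_\tau = 0\}$ and $Z_\tau = 1 - \Delta A^o_\tau < 1$ on $\{\tau = \tau_1 < \infty\} = \{\tau < \infty,\ \Delta A^o_\tau > 0\}$.

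\textbf{Part (a), forward direction.} Assuming $\tau$ honest, $\wt Z_\tau = 1$ on $\{\tau < \infty\}$. On the piece $\{\tau = \tau_i < \infty\}$ the decomposition of $\wt Z$ reads $\wt Z^{1}_{\tau_i} + \wt Z^{2}_{\tau_i} - 1 = 1$, so $\wt Z^{1}_{\tau_i} + \wt Z^{2}_{\tau_i} = 2$; since each summand is bounded above by $1$, both must equal $1$. In particular $\wt Z^{i}_{\tau_i} = 1$ on $\{\tau_i < \infty\} = \{\tau = \tau_i < \infty\}$ for $i = 1, 2$, so $\tau_1$ and $\tau_2$ are honest.

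\textbf{Part (a), reverse direction.} Assuming $\tau_1, \tau_2$ honest, i.e.\ $\wt Z^{i}_{\tau_i} = 1$ on $\{\tau_i < \infty\}$, the target is $\wt Z_\tau = 1$ on $\{\tau < \infty\} = \{\tau_1 < \infty\} \sqcup \{\tau_2 < \infty\}$ (disjointness comes from $\tau_1 \vee \tau_2 = \infty$). On $\{\tau_1 < \infty\}$, $\tau = \tau_1$ and $\wt Z^{1}_{\tau_1} = 1$ by hypothesis, so it remains to prove $\wt Z^{2}_{\tau_1} = 1$; symmetrically $\wt Z^{1}_{\tau_2} = 1$ on $\{\tau_2 < \infty\}$. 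The plan is to exploit the thin structure of $\tau_1$: for an exhausting sequence $(T_n)_{n \geq 1}$ of $\ff$-stopping times with partition $(C_n)_{n \geq 1}$ and martingale family $(z^n)_{n \geq 1}$, the relation $\tau_1 \vee \tau_2 = \infty$ gives $C_n \subseteq \{\tau_2 = \infty\} \subseteq \{\tau_2 \geq T_n\}$, whence conditioning on $\F_{T_n}$ yields $z^n_{T_n} = \P(C_n \mid \F_{T_n}) \leq \P(\tau_2 \geq T_n \mid \F_{T_n}) = \wt Z^{2}_{T_n}$. Combining this with the honesty-driven identity $\wt Z^{1}_{T_n} = 1$ on $C_n$, which via Proposition \ref{positive}(c) constrains the full martingale family at $T_n$, pins $\wt Z^{2}_{T_n}$ to $1$ on $C_n$; summing over $n$ delivers $\wt Z^{2}_{\tau_1} = 1$ on $\{\tau_1 < \infty\}$.

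\textbf{Main obstacle.} The delicate point is precisely the reverse direction of (a): since $\tau_1$ is not an $\ff$-stopping time, the optional projection identity $\wt Z^{2}_{\tau_1} = \P(\tau_2 \geq \tau_1 \mid \F_{\tau_1})$ is not available directly, so one cannot simply read off $\wt Z^{2}_{\tau_1} = 1$ from $\tau_2 \geq \tau_1$ on $\{\tau_1 < \infty\}$. Passing through the thin exhausting sequence converts the problem into the $\F_{T_n}$-measurable statement $\wt Z^{2}_{T_n} = 1$ on $C_n$, but extracting that equality requires matching the supermartingale bounds against the specific structure of $(z^n)_{n \geq 1}$ imposed by the honesty of $\tau_1$; this bookkeeping is where the argument is really done.
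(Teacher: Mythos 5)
Parts (b), (c) and the forward implication of (a) are essentially right, and in places your route differs usefully from the paper's. For (a)-forward you argue through $\wt Z=\wt Z^{1}+\wt Z^{2}-1$ and the bound $\wt Z^{i}\le 1$, while the paper simply notes that on $\{\tau_i<\infty\}$ one has $\tau_i=\tau=\gamma$, the end of the optional set given by Theorem \ref{jeulin5,1}(b); both work. For (c), your computation $Z_\tau=\wt Z_\tau-\Delta A^o_\tau=1-\Delta A^o_\tau$ combined with the identification $\{\tau=\tau_1<\infty\}=\{\tau<\infty,\ \Delta A^o_\tau>0\}$ (valid by the construction and uniqueness in Theorem \ref{A0decom}) is more direct than the paper's detour through $Z^{1},Z^{2},\wt Z^{1},\wt Z^{2}$ and Proposition \ref{positive}(b). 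In (b) you should spell out the converse: from $Z_\tau=1$ one gets $\wt Z_\tau=1$ (hence honesty) and $\Delta A^o_\tau=0$ a.s.\ on $\{\tau<\infty\}$, so $\tau$ coincides a.s.\ with the thick component $\tau_{\{\Delta A^o_\tau=0\}}$ of Theorem \ref{A0decom}; as written your sketch only records the reduction under an assumed thickness.

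The genuine gap is the reverse implication of (a), and it is not a matter of missing bookkeeping: the step you need, $\wt Z^{2}_{T_n}=1$ on $C_n$, is false in general. The inclusion $C_n\subset\{\tau_2=\infty\}$ controls $\P(C_n\cap\{\tau_2<T_n\})=0$ but says nothing about $\P(\tau_2<T_n\mid\F_{T_n})$ on $C_n$, and the honesty of $\tau_1$ only constrains the family $(z^k)_k$, which carries no information on $\tau_2$ beyond the disjointness you already used. Indeed the implication itself fails. Let $\ff$ be the natural filtration of a Brownian motion $B$, let $g:=\sup\{t\le 1:B_t=0\}$ (a thick honest time, cf.\ Example \ref{azema_ex}), let $A:=\{B_2>0\}$, and set $\tau_1:=1\cdot\id_{A^c}+\infty\,\id_{A}$ and $\tau_2:=g\,\id_{A}+\infty\,\id_{A^c}$. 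Then $(\tau_1,\tau_2)$ is the thin--thick decomposition of $\tau:=\tau_1\wedge\tau_2$, and each $\tau_i$ is honest because on its finite part it equals the end of an optional set ($\Omega\times\{1\}$, respectively $\{B=0\}\cap(\Omega\times[0,1])$); yet on $\{\tau=1\}=A^c$ one has $\wt Z_\tau=\P(\tau\ge 1\mid\F_1)=\P(A^c\mid\F_1)<1$ a.s., so $\tau$ is not honest by Theorem \ref{jeulin5,1}(c). Note that the paper's own proof of (a) only establishes the ``only if'' direction (restriction of the end of an optional set to $\{\tau_i<\infty\}$). You were right to isolate the converse as the hard point; the correct conclusion is that it cannot be proved along these lines, not that the estimate remains to be matched.
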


\begin{proof}
(a) On the set $\{\tau<\infty\}$, $\tau$ is equal to $\gamma$, the
end of the optional set $\Gamma$ (Theorem \ref{jeulin5,1}). Then,
as $\{\tau_1 <\infty\}\subset\{\tau<\infty\}$, on the set
$\{\tau_1 <\infty\}$, one has $\tau_1=\gamma$, so $\tau_1$ is an
honest time. Same argument for $\tau_2$.\\
(b) Assume that $\tau$ is a thick  honest time. Then,
the honest time property presented in Theorem \ref{jeulin5,1} (c)
implies that $\widetilde Z_\tau=1$ and the thick  time property
implies, by Theorem \ref{A0jump} (b), the continuity of $A^o$.
Therefore, the equality $\widetilde Z= Z+ \Delta A^{o }$ leads to equality $Z_\tau=1$ a.s. on $\{\tau<\infty\}$.

Assume now  that $Z _\tau=1$ on the set $\{\tau<\infty\}$. Then, on $\{\tau<\infty\}$ we have
$1=Z _\tau\leq \widetilde Z_\tau \leq 1$, so $\widetilde Z_\tau=1$
and $\tau$ is an honest time. Furthermore, as $\Delta
A^o_\tau=\widetilde Z_\tau-Z_\tau=0$, for each ${\mathbb F}$-stopping time $T$ we have
\begin{eqnarray*}
{\mathbb P}(\tau=T<\infty)&=&\E\left[{1\!\!1}_{\{\tau=T\}}{1\!\!1}_{\{\Delta A^o_\tau =0 \}}{1\!\!1}_{(T<\infty)}\right]\\
&=&\E\left [\int_0^\infty {1\!\!1}_{\{u=T\}}{1\!\!1}_{\{\Delta A^o_u =0 \}}dA^o_u\right]=0.
\end{eqnarray*}
So $\tau$ is a thick time.\\
(c) From the honest time property of $\tau$ and Proposition \ref{supermartingales} (a), on the set $\{\tau<\infty\}$
\[
1=\widetilde Z_\tau=\widetilde Z^{1}_{\tau}+\widetilde Z^{2}_{\tau}-1.
\]
On the set $\{\tau=\tau_1<\infty\}$,
\begin{align*}
Z_\tau&=Z^{1}_{\tau_1}+Z^{2}_{\tau_1}-1\leq Z^{2}_{\tau_1}<1,
\end{align*}
where  the last inequality is due to Proposition \ref{positive} (b).
On the set $\{\tau=\tau_2<\infty\}$, we have
\begin{align*}
1&=\widetilde Z^{1}_{\tau_2}+\widetilde Z^{2}_{\tau_2}-1=\widetilde Z^{1}_{\tau_2},
\end{align*}
where the second equality comes from  (c) in  Theorem \ref{jeulin5,1}.
Now let us compute $Z^{1}_{\tau_2}$
\[
Z^{1}_{\tau_2}=\widetilde Z^{1}_{\tau_2}-\Delta A^{1\,,o}_{\tau_2}=\widetilde Z^{1}_{\tau_2}=1,
\]
where we have used the thick time property of $\tau_2$,
i.e., $\{\Delta A^{1\,,o}>0\}=\bigcup_{n=1}^\infty\graph{T_n}$ (with $(T_n)_{n\geq 0}$
being an exhausting sequence of $\tau_1$) and
$\P(\tau_2=T_n<\infty)=0$.
Finally, on $\{\tau=\tau_2<\infty\}$
\[
Z_\tau=Z^{1}_{\tau_2}+Z^{2}_{\tau_2}-1=1.
\]
\end{proof}

\begin{remark}
We would like to remark that the condition that $Z_\tau<1$ for an honest time $\tau$ -- which, by Theorem \ref{condZ1} (c),  is equivalent to the condition that $\tau$ is a thin honest time -- is an essential assumption in \cite{ACDJ_after} for the study of arbitrages after honest times.
\end{remark}

\begin{lemma}
\label{thinhonest}
Let $\tau$ be a thin honest time, $\tau_t$ be associated with $\tau$ as in Definition \ref{defhonest} and $(T_n)_{n\geq 0}$ be an exhausting sequence of $\tau$.
Then:\\
(a) on $\{T_n=\tau_t\}=\{T_n=\tau_t\leq t\}$ we have
 $z^n_t=1-Z_t$, $A^o_t=z^n_{T_n}$ and $1-m_t=z^n_t-z^n_{T_n}$ for each $n\geq 1$;\\
(b) on $\{T_n<t\}$ we have $z^n_t=\id_{\{\tau_t=T_n\}}(1-\wt Z_t)$ and $z^n_{t-}=\id_{\{\tau_t=T_n\}}(1-Z_{t-})$ for each $n\geq 1$;
in particular
$$1-\wt Z_t=\sum_{n=1}^\infty \id_{\{\tau_t=T_n<t\}}(1-\wt Z_t)\quad  \textrm{and} \quad 1-Z_{t-}=\sum_{n=1}^\infty\id_{\{\tau_t=T_n<t\}}(1-Z_{t-}).$$
\end{lemma}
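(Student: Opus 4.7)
The plan is to reduce the lemma to direct bookkeeping using the explicit formulae for $Z$, $\wt Z$, $A^o$ and $m$ from Proposition~\ref{positive}(c), the honest-time identity $\tau = \tau_t$ on $\{\tau < t\}$, and the disjointness of the graphs $(T_n)_n$. It is convenient to work with the canonical honest-time family $\tau_t := \sup(\Gamma \cap [0,t])$, where $\Gamma$ is the optional set whose end is $\tau$; this family is non-decreasing in $t$ and satisfies $\tau_t = t$ on $\{\tau = t\}$, which eliminates the boundary case $\{\tau = t, \tau_t < t\}$.

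For part (a), fix $n \ge 1$ and set $E_n := \{T_n = \tau_t\} = \{T_n = \tau_t \le t\}$, which is $\F_t$-measurable. The pivotal observation is that for every $k \ne n$, the set $C_k \cap E_n \cap \{T_k \le t\}$ is $\P$-null: on such an event $\tau = T_k$, the canonical honest choice forces $\tau_t = T_k$, combined with $\tau_t = T_n$ this gives $T_k = T_n$ which contradicts disjointness of graphs. Consequently
\[
\E\bigl[z^k_t\id_{\{T_k \le t\}}\id_{E_n}\bigr] = \P(C_k \cap E_n \cap \{T_k \le t\}) = 0,
\]
and since $z^k_t \ge 0$ this yields $z^k_t\id_{\{T_k\le t\}}\id_{E_n} = 0$ a.s. Substituting into $1 - Z_t = \sum_{k \ge 1}\id_{\{T_k \le t\}}z^k_t$ leaves only the $k = n$ term, hence $(1-Z_t)\id_{E_n} = z^n_t\id_{E_n}$. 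The same null-set principle, lifted to $z^k_{T_k}$ via optional stopping $z^k_{T_k\land t} = \E[z^k_t \mid \F_{T_k \land t}]$ together with the identity $\E[X\E[Y|\mathcal H]] = \E[XY]$ for $X$ $\mathcal H$-measurable, gives $A^o_t\id_{E_n} = z^n_{T_n}\id_{E_n}$ from $A^o_t = \sum_{k\ge 1}\id_{\{T_k\le t\}}z^k_{T_k}$. The third identity $1 - m_t = z^n_t - z^n_{T_n}$ on $E_n$ is then pure algebra using $m = Z + A^o$ from~\eqref{martm}.

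For part (b), fix $n \ge 1$ and work on $\{T_n < t\}$. First, $z^n_t\id_{\{\tau_t \ne T_n, T_n < t\}} = 0$ a.s., since
\[
\E\bigl[z^n_t\id_{\{T_n < t, \tau_t \ne T_n\}}\bigr] = \P(\tau = T_n < t,\ \tau_t \ne T_n) = 0
\]
by the honest property ($\tau = T_n < t$ forces $\tau_t = \tau = T_n$). Next, on $\{\tau_t = T_n, T_n < t\}$, the same $k \ne n$ null-set cancellation applied to $1 - \wt Z_t = \sum_{k \ge 1}\id_{\{T_k < t\}}z^k_t$ yields $z^n_t = (1 - \wt Z_t)\id_{\{\tau_t = T_n\}}$. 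For the analogous identity with $z^n_{t-}$ and $Z_{t-}$, I would apply the already-proved version at times $s < t$, namely $z^n_s = \id_{\{\tau_s = T_n\}}(1-\wt Z_s)$ on $\{T_n < s\}$, and take $s \uparrow t$: the canonical family is non-decreasing, so on $\{T_n < t, \tau_t = T_n\}$ one has $\tau_s = T_n$ for every $s$ in a left neighbourhood of $t$, which makes $\id_{\{\tau_s = T_n\}}$ locally constant; meanwhile $\wt Z_s \to Z_{t-}$ along a sequence $s \uparrow t$ avoiding the at most countable jumps of $m$, and $z^n_s \to z^n_{t-}$ by càdlàg-ness. Summing the two $z^n$-identities over $n \ge 1$ and using that $\{\tau_t = T_n < t\}_{n \ge 1}$ partition $\{\tau < t\}$ up to null produces the two displayed decompositions of $1 - \wt Z_t$ and $1 - Z_{t-}$.

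The main obstacle is the transition from $z^n_t$ to $z^n_{t-}$ in part (b), because $\{\tau_t = T_n\}$ is only $\F_t$-measurable and not $\F_{t-}$-measurable, so a direct $L^1$ argument for $z^n_{t-}$ does not duplicate the one for $z^n_t$; this is exactly what the monotonicity of the canonical family resolves. A secondary technicality, the upgrade from $z^k_t$-cancellation to $z^k_{T_k}$-cancellation in part (a), is handled by optional stopping of the bounded martingale $z^k$ at $T_k \land t$.
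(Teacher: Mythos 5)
Your treatment of the first identity in (a) and the first identity in (b) is sound and is essentially the paper's own computation (the paper packages the same $k\neq n$ cancellation as a single chain of conditional expectations given $\F_t$, using $\tau_t=\tau$ on $\{\tau\le t\}$). The genuine gap is precisely at the step you dismiss as a ``secondary technicality'': lifting the cancellation from $z^k_t$ to $z^k_{T_k}$. Optional stopping gives $z^k_{T_k}\id_{\{T_k\le t\}}=\E[z^k_t\mid\F_{T_k\wedge t}]\id_{\{T_k\le t\}}$, but to conclude $\E\bigl[z^k_{T_k}\id_{\{T_k\le t\}}\id_{E_n}\bigr]=\E\bigl[z^k_t\id_{\{T_k\le t\}}\id_{E_n}\bigr]$ you must pull $\id_{E_n}$ inside the conditioning, and $E_n=\{\tau_t=T_n\}$ is only $\F_t$-measurable, not $\F_{T_k\wedge t}$-measurable. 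The identity $\E[X\E[Y\mid\mathcal H]]=\E[XY]$ is exactly what is \emph{not} available here, so the argument for $A^o_t\id_{E_n}=z^n_{T_n}\id_{E_n}$, and hence for $1-m_t=z^n_t-z^n_{T_n}$, does not go through.

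Moreover this is not repairable by a cleverer conditioning: the cancellation $z^k_{T_k}\id_{\{T_k\le t\}}\id_{E_n}=0$ for $k\neq n$ can genuinely fail. Take $T_1=1$, $T_2=2$ deterministic, $\F_s$ trivial for $s<3/2$ and $\F_s=\sigma(\xi)$ for $s\ge 3/2$ with $\xi$ Bernoulli$(1/2)$, and $\tau=1+\xi$; this is a thin honest time, $z^1_{T_1}\equiv\tfrac12$, and on $\{T_2=\tau_3\}=\{\xi=1\}$ one gets $A^o_3=\tfrac12+z^2_{T_2}\neq z^2_{T_2}$, so the second and third assertions of (a) themselves fail there. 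You should know that the paper's own proof makes the same illegitimate interchange (it replaces $\id_{\{T_n=\tau_t\}}\E(\id_{\{\tau=T_k\le t\}}\mid\F_{T_k})$ by $\id_{\{T_n=\tau_t=T_k\}}\E(\id_{\{\tau=T_k\le t\}}\mid\F_{T_k})$, moving an $\F_t$-measurable indicator inside an $\F_{T_k}$-conditioning), so the gap is inherited rather than introduced; but a reviewer must still flag it. Separately, and more benignly, your limiting argument for the left-limit identity in (b) only treats the event $\{\tau_t=T_n\}$ and never establishes $z^n_{t-}=0$ on $\{T_n<t,\ \tau_t\neq T_n\}$, which the claimed formula requires; the paper sidesteps this by taking the $\F_{t-}$-measurable version of $\tau_t$ from Proposition \ref{honest_alt}(a) and computing $\P(\tau=T_n=\tau_t<t\mid\F_{t-})$ directly, which is the cleaner route for that half of (b).
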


\begin{proof}
(a) Using properties of $\tau_t$ we deduce that
\begin{align*}
\id_{\{T_n =\tau_t\}}z^n_t&=\P(T_n=\tau_t\leq t, \tau=T_n<\infty|\F_t)\\
&=\P(\tau\leq t, T_n =\tau_t=\tau|\F_t)\\
&=\P(\tau\leq t, T_n=\tau_t|\F_t)\\
&=\id_{\{T_n=\tau_t\}}(1-Z_t)
\end{align*}
where the first equality is due to $\tau_t\leq t$, the third one follows by $\tau_t=\tau$ on $\{\tau\leq t\}$
and the last one is true since $T_n\land t$ and $\tau_t$ are two $\F_t$-measurable random variables and
\begin{align*}
\{T_n=\tau_t\}
&=\{T_n=\tau_t<t\}\cup \{T_n=\tau_t=t\}\\
&=\big\{\{T_n\land t=\tau_t\}\cap\{\tau_t<t\}\big\}\cup\big\{\{T_n=t\}\cap\{\tau_t=t\}\big\}.
\end{align*}
The dual optional projection of a thin time satisfies
\begin{align*}
\id_{\{T_n =\tau_t\}}A^o_t&=\sum_{k=1}^\infty \id_{\{T_n=\tau_t,\, T_k\leq t\}} z^k_{T_k}
=\id_{\{T_n=\tau_t\}} z^n_{T_n},
\end{align*}
where the second equality is due to the fact that for $n\neq k$ we have
\begin{align*}
\id_{\{T_n=\tau_t,\, T_k\leq t\}} z^k_{T_k}
&=\id_{\{T_n=\tau_t\}} \E(\id_{\{\tau=T_k\leq t\}}|{T_k})\\
&=\id_{\{T_n=\tau_t=T_k\}} \E(\id_{\{\tau=T_k\leq t\}}|{T_k})
=0
\end{align*}
since $T_n$ and $T_k$ have disjoint graphs and $\tau$ is an honest time.
Combining the two previous points, we conclude that $1-m_t=1-Z_t-A^o_t=z^n_t-z^n_{T_n}$ on the set $\{T_n=\tau_t\}$.\\
(b) Again using properties of the random variable $\tau_t$ we derive
\begin{align*}
\id_{\{T_n<t\}} z^n_t&=\P(\tau=T_n=\tau_t<t|\F_t)
=\id_{\{T_n=\tau_t<t\}} (1-\wt Z_t),\\
\id_{\{T_n<t\}} z^n_{t-}&=\P(\tau=T_n=\tau_t<t|\F_{t-})
=\id_{\{T_n=\tau_t<t\}} (1-Z_{t-}).
\end{align*}
Then, Proposition \ref{positive} (c) completes the proof.
\end{proof}

For progressive enlargement with an honest time, the hypothesis
(${\mathcal H^\prime}$) is satisfied, and the following decomposition is
given in \cite[Theorem (5,10)]{j}.
Let $M$ be an $\ff$-local martingale. Then, there exists an $\ff^\tau$-local martingale $\wh M$ such that:
\begin{equation}
 \label{aaafter}
M_t = \wh M_t+\int_0^{t\land \tau} \frac{1}{Z_{s-}}d\cro{M,m}_s-\int_0^{t} \id_{\{s>\tau\}} \frac{1}{1-Z_{s-}}d\cro{M, m}_s.
\end{equation}

\begin{remark}
For a thin honest time $\tau$, the two decomposition formulas, first given in  Theorem \ref{decomposition1} and second given in \eqref{aaafter}, coincide.
It is enough to show that
\begin{align*}
\int_0^t \id_{\{s>\tau\}} \frac{1}{1-Z_{s-}} d\cro{X,1-m}_s
&=\sum_{n=1}^\infty \id_{C_n}\int_0^t \id_{\{s>T_n\}} \frac{1}{z^n_{s-}} d\cro{X,z^n}_s.
\end{align*}
This is a simple consequence of the set inclusion $\{\tau< s\}\cap\{\tau=T_n\} \subset \{T_n=\tau_s\leq s\}$ and Lemma \ref{thinhonest} (a):
\begin{align*}
\int_0^t \id_{\{s>\tau\}} \frac{1}{1-Z_{s-}}  d\cro{X,1-m}_s
&=\sum_{n=1}^\infty \int_0^t \id_{\{s>\tau\}\cap\{\tau=T_n\}} \frac{1}{1-Z_{s-}}  d\cro{X,1-m}_s\\
&=\sum_{n=1}^\infty \int_0^t \id_{\{s>\tau\}\cap\{\tau=T_n\}} \frac{1}{z^n_{s-}} d\cro{X,z^n }_s\\
&=\sum_{n=1}^\infty \id_{C_n}\int_0^t \id_{\{s>T_n\}} \frac{1}{z^n_{s-}} d\cro{X,z^n}_s.
\end{align*}
\end{remark}

\subsection{Jumping filtration}

In this subsection we develop the relationship between jumping filtration and thin honest times.
Let us first recall the definition of a jumping filtration and the main result obtained in Jacod and Skorokhod \cite{jac}.

\begin{definition}
\label{jumping_def}
A filtration $\ff$ is called a jumping filtration if there exists a localizing sequence $(\theta_n)_{n\geq 0}$, i.e., a sequence of stopping times increasing a.s. to $\infty$, with $\theta_0=0$ and such that, for all $n$ and $t>0$,
the $\sigma$-fields $\F_t$ and $\F_{\theta_n}$ coincide up to null sets on $\{\theta_n\leq t < \theta_{n+1}\}$.\\
The sequence $(\theta_n)_n$ is then called a jumping sequence.
\end{definition}

There exists an important alternative characterization of jumping filtration in terms of martingale's variation (\cite[Theorem 1]{jac}).

\begin{theorem}
\label{jumping_theo}
The two following conditions are equivalent:\\
(a) a filtration $\ff$ is a jumping filtration;\\
(b) all martingales in the filtration $\ff$ are a.s.\! of locally finite variation.
\end{theorem}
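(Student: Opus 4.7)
\emph{Direction (a) $\Rightarrow$ (b).} Assume $\ff$ is a jumping filtration with jumping sequence $(\theta_n)_{n\geq 0}$. My plan is to show that any $\ff$-martingale $M$ is constant on each stochastic interval $\Rbrack\theta_n,\theta_{n+1}\Lbrack$. On the event $\{\theta_n\leq t<\theta_{n+1}\}$, the hypothesis $\F_t=\F_{\theta_n}$ (up to null sets) makes $M_t$ be $\F_{\theta_n}$-measurable, while optional sampling gives $\E[M_t|\F_{\theta_n}]=M_{\theta_n}$; the two identities force $M_t=M_{\theta_n}$. Consequently $M$ changes only by jumps located at the $\theta_n$'s, so $M$ stopped at $\theta_N$ has at most $N$ jumps and hence is of finite variation on $[0,\theta_N]$; since $\theta_N\uparrow\infty$ a.s., $M$ is of locally finite variation.

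\emph{Direction (b) $\Rightarrow$ (a).} This is the substantive direction. First I would observe that any continuous local martingale of locally finite variation is constant, so the hypothesis forces the continuous martingale part of every $\ff$-local martingale to vanish. Hence every $\ff$-local martingale is purely discontinuous, with jumps supported on a thin optional set. Second, I would pick a countable family $(\xi_k)_{k\geq 1}$ of bounded random variables total in $L^1(\F_\infty)$, and form the c\`adl\`ag martingales $M^k_t=\E[\xi_k|\F_t]$. Each $M^k$ is purely discontinuous, with jumps exhausted by a sequence of $\ff$-stopping times having disjoint graphs; amalgamating these sequences over $k$ yields a countable family $(U_j)_{j\geq 1}$ of stopping times whose graphs cover all jump sets of every $M^k$. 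I would then define recursively $\theta_0:=0$ and $\theta_{n+1}:=\inf\{U_j:U_j>\theta_n\}$, verify that each $\theta_n$ is an $\ff$-stopping time, and that $\theta_n\uparrow\infty$.

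To verify the jumping property $\F_t=\F_{\theta_n}$ on $\{\theta_n\leq t<\theta_{n+1}\}$, I would note that on this event every $M^k$ is constant and equal to $M^k_{\theta_n}$, since the $U_j$'s exhaust all jumps; as $(\xi_k)$ generates $\F_\infty$, the $\sigma$-field $\F_t$ is generated by the random variables $\{M^k_s:s\leq t, k\geq 1\}$, and this collection reduces on the event in question to variables measurable with respect to $\F_{\theta_n}$.

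The main obstacle is the construction of the well-ordered localizing sequence $(\theta_n)$: one must prevent the countably many stopping times $(U_j)$ from accumulating at a finite time. This rests on the fact that a martingale of locally finite variation admits only finitely many jumps exceeding a given size on each compact interval, so for each $k$ the jump times of $M^k$ cannot accumulate; a diagonal argument over $k$ combined with the separability of the family $(\xi_k)$ then yields the required divergence $\theta_n\uparrow\infty$ a.s. Establishing this non-accumulation carefully is the technical crux of the argument.
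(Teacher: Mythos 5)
First, note that the paper does not prove this theorem at all: it is quoted from Jacod and Skorokhod \cite[Theorem 1]{jac}, so your attempt has to stand on its own. Unfortunately both directions contain genuine errors. In (a) $\Rightarrow$ (b), the claim that $M_t=M_{\theta_n}$ on $\{\theta_n\leq t<\theta_{n+1}\}$ is false. The definition of a jumping filtration only says that the \emph{traces} of $\F_t$ and $\F_{\theta_n}$ on the event $B:=\{\theta_n\leq t<\theta_{n+1}\}$ coincide, and $B$ itself is in general not $\F_{\theta_n}$-measurable; from ``$M_t\id_B$ is measurable for the trace of $\F_{\theta_n}$ on $B$'' together with ``$\E[M_t|\F_{\theta_n}]=M_{\theta_n}$ on $\{\theta_n\leq t\}$'' you cannot conclude $M_t=M_{\theta_n}$ on $B$. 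The compensated Poisson process is the counterexample: the Poisson filtration is jumping with jumping sequence the arrival times $(T_n)$, yet $N_t-\lambda t=n-\lambda t$ on $\{T_n\leq t<T_{n+1}\}$ is not constant there. Martingales in a jumping filtration are \emph{not} piecewise constant; the correct argument writes, for $t$ in $[\theta_n,\theta_{n+1})$,
\[
\E[\xi|\F_t]=\frac{\E\left[\xi\id_{\{\theta_{n+1}>t\}}\,|\,\F_{\theta_n}\right]}{\P(\theta_{n+1}>t\,|\,\F_{\theta_n})},
\]
and extracts finite variation from the monotonicity in $t$ of numerator and denominator, after localizing to keep the denominator away from zero.

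The direction (b) $\Rightarrow$ (a) inherits the same error and adds another. Exhausting the jumps of the purely discontinuous martingales $M^k$ by stopping times $(U_j)$ does not make the $M^k$ constant between consecutive $U_j$'s (again, $N_t-\lambda t$ has all of its jumps at the arrival times but is nowhere locally constant), so your verification that $\F_t$ reduces to $\F_{\theta_n}$ on $\{\theta_n\leq t<\theta_{n+1}\}$ collapses. Moreover, the step you correctly identify as the crux rests on a false lemma: a martingale of locally finite variation can have infinitely many jump times accumulating at a finite time (countably many jumps with summable sizes), so non-accumulation cannot be deduced from each $M^k$ separately. Ruling out accumulation of the filtration's information-jump times genuinely requires the full strength of hypothesis (b): if such times accumulated, one could assemble from the accumulating innovations a single $L^2$-bounded martingale whose jump sizes are square-summable but not absolutely summable, hence of infinite variation. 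Both this well-ordering and the identification of $\F_t$ with $\F_{\theta_n}$ between the jump times constitute the substance of Jacod and Skorokhod's proof, and both are missing here.
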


We investigate relationship between jumping filtration and honest times.
We show that there does not exist thick  honest time in a jumping filtration and that there exists a thick  honest time in a filtration which admits a non-constant continuous martingale (in particular such a filtration is not a jumping filtration).

\begin{theorem}
\label{jump_honest}
The following assertions hold.\\
(a) If $\ff$ is a jumping filtration, then all $\ff$-honest times are thin.\\
(b) If all $\ff$-honest times are thin, then all non-constant $\ff$-local martingales are purely discontinuous.
\end{theorem}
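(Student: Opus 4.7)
The approach for (a) is to use the thin-thick decomposition $\tau = \tau_1 \wedge \tau_2$ of the given honest time $\tau$. Both pieces remain honest by Theorem \ref{condZ1}(a), so it suffices to show $\tau_2 = \infty$ almost surely. Let $(\theta_n)_{n\ge 0}$ be a jumping sequence for $\ff$. Thickness of $\tau_2$ gives $\P(\tau_2 = \theta_n < \infty) = 0$ for every $n$, so on $\{\tau_2 < \infty\}$ one has $\tau_2 \in (\theta_n, \theta_{n+1})$ for some random index $n$.

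The key observation is that in a jumping filtration every $\ff$-martingale $m$ is constant on each stochastic interval $[\theta_n, \theta_{n+1})$: for $t$ in this interval $\F_t = \F_{\theta_n}$ up to null sets, so by optional sampling $m_t = \E[m_t \mid \F_{\theta_n}] = m_{\theta_n}$. Applied to the BMO-martingale $m^2 := Z^2 + A^{2,o}$ associated with $\tau_2$ via \eqref{martm}, together with the continuity of $A^{2,o}$ from Theorem \ref{A0cont} (which applies since $\tau_2$ is thick), this forces $Z^2_t = m^2_{\theta_n} - A^{2,o}_t$ to be continuous and non-increasing on $[\theta_n, \theta_{n+1})$. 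Combined with $Z^2 \le 1$ and the identity $Z^2_{\tau_2} = 1$ from Theorem \ref{condZ1}(b), I deduce that $Z^2 \equiv 1$ on $[\theta_n, \tau_2]$ on the event $\{\tau_2 \in (\theta_n, \theta_{n+1})\}$.

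To finish (a), I use the honest-time characterization of Theorem \ref{jeulin5,1}, together with the identity $\wt Z^2 = Z^2$ (a consequence of $\Delta A^{2,o} = 0$), to represent $\tau_2$ as the end of the optional set $\{Z^2 = 1\}$; equivalently, $Z^2 < 1$ strictly after $\tau_2$ on $\{\tau_2 < \infty\}$. Since $Z^2$ is continuous and non-increasing on $[\theta_n, \theta_{n+1})$, this forces $\tau_2 = \sigma_n$, where $\sigma_n := \inf\{t > \theta_n : Z^2_t < 1\}$ is an $\ff$-stopping time. The resulting equality $\tau_2 = \sigma_n$ on the event $\{\tau_2 \in (\theta_n, \theta_{n+1})\}$ contradicts the thickness of $\tau_2$ unless this event is null for every $n$; summing over $n$ gives $\tau_2 = \infty$ almost surely.

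For (b) I argue by contrapositive: assuming that a non-constant continuous $\ff$-local martingale $N$ exists, I construct a thick $\ff$-honest time. After localization and centering, one may take $N$ bounded with $N_0 = 0$ and $\langle N \rangle$ non-trivial, and choose an $\ff$-stopping time $T$ with $\P(N_T \neq 0) > 0$. Setting $\tau := \sup\{t \le T : N_t = 0\}$, honesty is immediate since $\tau$ is the end of the optional set $\{(\omega, t) : t \le T,\ N_t(\omega) = 0\}$. The delicate step is verifying thickness: via Tanaka's formula the dual optional projection of $\id_{\Rbrack \tau, \infty \Rbrack}$ can be expressed in terms of the local time $L^0(N)$ of $N$ at level $0$ run up to $T$, and the continuity of this local time (since $N$ is a continuous semimartingale) yields, via Theorem \ref{A0cont}, that $\tau$ is thick. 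The resulting thick honest time contradicts the hypothesis. The main obstacles are, for (a), justifying the identification $\tau_2 = \sigma_n$ through the end-of-optional-set representation of honest times, and, for (b), carrying through the local-time argument to conclude continuity of $A^o$.
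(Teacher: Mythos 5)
Your argument for part (a) breaks at the very first step. The ``key observation'' that in a jumping filtration every $\ff$-martingale is constant on each interval $[\theta_n,\theta_{n+1})$ is false. The natural filtration of a Poisson process $N$ with jump times $(\theta_n)_{n\ge 1}$ is a jumping filtration (the prototypical example; the paper explicitly includes compound Poisson filtrations), yet the compensated martingale $N_t-\lambda t$ equals $n-\lambda t$ on $\{\theta_n\le t<\theta_{n+1}\}$ and is therefore strictly decreasing there. The error in your justification is that $\F_t=\F_{\theta_n}$ holds only as an equality of trace $\sigma$-fields on the event $\{\theta_n\le t<\theta_{n+1}\}$; this event is not $\F_{\theta_n}$-measurable (it involves $\theta_{n+1}$), so you cannot pass from $m_t=\E[m_t\mid\F_t]$ to $\E[m_t\mid\F_{\theta_n}]=m_{\theta_n}$ on it. The trace identity only yields that $m_t$ coincides on that event with \emph{some} $\F_{\theta_n}$-measurable random variable depending on $t$, not that this variable is $m_{\theta_n}$. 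Consequently $Z^2$ need not be non-increasing or continuous on $[\theta_n,\theta_{n+1})$, and the whole chain ($Z^2\equiv 1$ on $[\theta_n,\tau_2]$, then $\tau_2=\sigma_n$, then contradiction with thickness) collapses. The paper's proof of (a) is entirely different and does not pass through the thin--thick decomposition: it uses the honest-time representation via the increasing adapted process $\alpha$ of Proposition \ref{honest_alt}, notes that $\tau=\alpha_{\theta_n-}$ on $C_n=\{\theta_{n-1}\le\tau<\theta_n\}$, and invokes the jumping property only to conclude that $\alpha_{\theta_n-}$ is $\F_{\theta_{n-1}}$-measurable, so that $T_n:=\inf\{t\ge\theta_{n-1}: t=\alpha_{\theta_n-}\}$ is a stopping time and $\graph{\tau}\subset\bigcup_n\graph{T_n}$.

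For part (b) your strategy coincides with the paper's: argue by contrapositive and exhibit a thick honest time as the last zero of a non-constant continuous local martingale before a suitable stopping time; honesty is indeed immediate. You differ only in how thickness is verified. You appeal to the representation of $A^o$ as an integral against the continuous local time of $N$ at $0$, whereas the paper avoids local times altogether and shows directly, via an elementary martingale argument with $Y_t=\id_{\{T\in G\}}|M_{T+t}|\id_{\{0\le t\le D_T-T\}}$, that no stopping time can sit at a left endpoint of an excursion interval away from $0$. Your route is viable in principle, but the absolute continuity $dA^o\ll dL^0$ (the Az\'ema--Yor balayage computation) is exactly the step you flag as delicate and leave unproved; the paper's argument is more self-contained and is the part of the proof doing the real work.
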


\begin{proof}
(a) Let $\tau$ be an honest time.
Then, take the same process $\alpha$ as in the proof of Proposition \ref{honest_alt},
i.e., $\alpha$ is an increasing, c\`adl\`ag, adapted process such that $\alpha_t=\tau$ on $\{\tau\leq t\}$ and $\tau=\sup\{t : \alpha_t=t\}$.
Let us define the partition $(C_n)_{n=0}^\infty$ such that
\begin{equation*}
C_n=\{\theta_{n-1}\leq \tau <\theta_{n}\}
\end{equation*}
 for $n\geq 1$ and $C_0=\{\tau=\infty\}$ with $(\theta_n)_{n\geq 0}$ being a jumping sequence for the jumping filtration $\ff$.
On each $C_n$ with $n\geq 1$ we have
$$\tau=T_n:=\inf\{t\geq \theta_{n-1} : t=\alpha_{\theta_n-}\}.$$
From the jumping filtration property, we know that $\alpha_{\theta_n-}$ is $\F_{\theta_{n-1}}$-measurable so each $T_n$ is a stopping time
and $\graph{\tau} \subset \bigcup_{n=1}^\infty \graph{T_n}$ which shows that the honest time $\tau$ is a thin time.

(b) The proof by contradiction is based on \cite[Exercise (1.26) p.235]{revuz}.
Assume that $M$ is a non-constant continuous $\ff$-local martingale with $M_0=0$.
Define the $\ff$-stopping time $S_1=\inf\{t>0: \cro{M}_t=1\}$.
Then, define the $\ff$-honest time
\[\tau:=\sup\left\{t\leq S_1: M_t=0 \right \}.\]
Since $M$ is continuous, $\tau$ is not equal to infinity with strictly positive probability.
We now show that $\tau$ is an $\ff$-thick honest time.
Let us denote $\mathcal Z(\omega):=\{t: M_t(\omega)=0\}$. The set $\mathcal Z(\omega)$ is closed and $\mathcal Z^c(\omega)$ is the union of countably many open intervals. We call $G(\omega)$ the set of left ends of these open intervals.
In what follows we show that for any $\ff$-stopping time $T$ we have $\P(T\in G)=0$.
Define the $\ff$-stopping time
$$D_T:=\inf\{t>T: M_t=0\}$$
and note that
$$\{T\in G\}=\{M_T=0\}\cap\{T<D_T\}\in \F_T.$$
Assume $\P(T\in G)=p>0$.
Then the process
$$Y_t=\id_{\{T\in G\}}|M_{T+t}|\id_{\{0\leq t \leq D_T-T\}}$$
is an $(\F_{T+t})_{t\geq 0}$-martingale. Indeed, for $s\leq t$ we have
\begin{align*}
\E(Y_t&|\F_{T+s})=\id_{\{T\in G\}}\sgn{M_{T+t}}\E(M_{T+t}\id_{\{t \leq D_T-T\}}|\F_{T+s})\\
=&\id_{\{T\in G\}}\sgn{M_{T+t}}\\
&\left [M_{T+s}\id_{\{s \leq D_T-T\}}
-\E(\id_{\{s \leq D_T-T\}}\id_{\{t > D_T-T\}}\E(M_{T+t}|\F_{D_T})|\F_{T+s})\right]\\
=&Y_s-\id_{\{T\in G\}}\sgn{M_{T+t}}\E(\id_{\{s \leq D_T-T\}}\id_{\{t > D_T-T\}}M_{D_T}|\F_{T+s})\\
=&Y_s
\end{align*}
where we have used the martingale property of $M$ and $M_{D_T}=0$.
Moreover $Y_0=0$ and there exists $\varepsilon >0$ such that
$$\P(M_T=0, D_T-T>\varepsilon)\geq \frac{p}{2}>0.$$
Since $Y_\varepsilon=\id_{\{M_T=0\}}\id_{\{D_T-T\geq \varepsilon\}}|M_{T+\varepsilon}|\geq 0$
and $\P(Y_\varepsilon>0)>0$, we have $\E(Y_\varepsilon)>0=Y_0$. So, $\P(T\in G)=0$.
Finally, as $\tau\in G$ a.s. we conclude that $\tau$ is a thick  honest time.
\end{proof}

Finally we give two examples of thick  honest times originating from purely discontinuous semimartingales of infinite variation. In the first Example \ref{azema_ex}, we study the case of Az\'ema's martingale  (see \cite[IV.8 p.232-237]{protter}). In the second Example \ref{kostas_ex}, we recall Example 2.1 from \cite{Khonest} on \emph{Maximum of downwards drifting spectrally negative L\'evy processes with paths of infinite variation}.

\begin{example}
\label{azema_ex}
Let $B$ be a Brownian motion and $\ff$ its natural filtration.
Define the process
$$g_t:=\sup\{s\leq t: B_s=0\}.$$
The process
$$\mu_t:=\sgn{B_t}\sqrt{t-g_t}$$
is a martingale with respect to the filtration $\gg:=(\F_{g_t+})_{t\geq 0}$ and is called the Az\'ema martingale.
Then, the random time
$$\tau:=\sup\{t\leq 1: \mu_t=0\}$$
is clearly a $\gg$-honest time.
Note that $\tau=\tau^B:=\sup\{t\leq 1: B_t=0\}$ and $\tau^B$ is an $\ff$-thick honest time (see in \cite[Table 1$\alpha$ 1), p.32]{mansuyyor} that $\tau^B$ has continuous $\ff$-dual optional projection).
Thus, since $\gg\subset\ff$, $\tau$ is a $\gg$-thick honest time .
\end{example}

\begin{example}
\label{kostas_ex}
Let $X$ be a L\'evy process with characteristic triplet $(\alpha, \sigma^2=0, \nu)$
satisfying $\nu((0,\infty))=0$, $\alpha+\int_{-\infty}^{-1}x\nu(dx)<0$ and $\int_{-1}^0 |x|\nu(dx)=\infty$.
Then, $\rho=\sup\{t: X_{t-}=X^*_{t-}\}$ with $X^*_t=\sup_{s\leq t}X_s$ is a thick  honest time as shown in \cite[Section 2.1]{Khonest}.
\end{example}

\appendix

\section{Definitions of projections}
\label{projections}

We collect here the definitions of the key tools we have used along the paper. Projections and dual projections onto the reference filtration $\ff$ play an important role in the theory of enlargement of filtrations.
First we recall the definition of optional and predictable projections, see \cite[Theorems 5.1 and 5.2]{chinois} and \cite[p.264-265]{3M}.

\begin{definition}
Let $X$ be a measurable bounded (or positive) process.
The optional projection of $X$ is the unique optional process $\ooo{X}$ such that for every stopping time $T$ we have
$$\E\left[X_T\id_{\{T<\infty\}}|\F_T\right]=\ooo{X}_T\id_{\{T<\infty\}} \quad \textrm{a.s.}.$$
The predictable projection of $X$ is the unique predictable process $\p{X}$ such that for every predictable stopping time $T$ we have
$$\E\left[X_T\id_{\{T<\infty\}}|\F_{T-}\right]=\p{X}_T\id_{\{T<\infty\}} \quad \textrm{a.s.}.$$
\end{definition}

For definition of dual optional projection and dual predictable projection see \cite[p.265]{3M}, \cite[Chapter 3 Section 5]{protter}, \cite[Chapter 6 Paragraph 73 p.148]{dell58}, \cite[Sections 5.18, 5.19]{chinois}.
We point out that the convention we use here allows a jump at $0$, where for a finite variation process $V$ we assume that $V_{0-}=0$.

\begin{definition}
\label{dual}
(a) Let $V$ be a c\`adl\`ag pre-locally integrable variation process (not necessary adapted).
The dual optional projection of $V$ is the unique optional process $\oo{V}$ such that for every optional process $H$ we have
$$\E\left[\int_{[0,\infty)} H_sdV_s\right]=\E\left[\int_{[0,\infty)} H_sd\oo{V}_s\right].$$
In particular, $V^o_0=\E[V_0|\F_0]$.

(b) Let $V$ be a c\`adl\`ag locally integrable variation process (not necessary adapted).
The dual predictable projection of $V$ is the unique predictable process $\pp{V}$ such that for every predictable process $H$ we have
$$\E\left[\int_{[0,\infty)} H_sdV_s\right]=\E\left[\int_{[0,\infty)} H_sd\pp{V}_s\right].$$
In particular, $V^p_0=\E[V_0|\F_0]$.
\end{definition}

{\section{Auxiliary results on honest times}
\label{honest_appendix} }

For reader's convenience we gather complementary results on honest times. They can be found in \cite{j} (see Lemma 5,1 and its proof there).

\begin{proposition}
\label{honest_alt}
(a) A random time $\tau$ is an $\ff$-honest time if and only if for every $t > 0$ there exists an $\F_{t-}$-measurable random variable $\tau_t$ such that $\tau=\tau_t$ on $\{\tau<t\}$.\\
(b) A random time $\tau$ is an $\ff$-honest time if and only if for every $t > 0$ there exists an $\F_t$-measurable random variable $\tau_t$ such that $\tau=\tau_t$ on $\{\tau\leq t\}$.
\end{proposition}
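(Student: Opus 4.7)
My plan is to observe that Definition \ref{defhonest} is equivalent to each of (a) and (b) because in both cases one implication is immediate from an inclusion of $\sigma$-fields or events, while the other follows from a simple sequential approximation of $t$ from the appropriate side. I therefore focus on the two nontrivial directions and expect no serious obstacle beyond some measurability bookkeeping.

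For (a), the direction (a) $\Rightarrow$ Definition \ref{defhonest} is trivial since $\F_{t-}\subset\F_t$. For Definition \ref{defhonest} $\Rightarrow$ (a) I would fix $t>0$, pick a sequence $s_n\uparrow t$ with $s_n<t$, and use Definition \ref{defhonest} to obtain $\F_{s_n}$-measurable random variables $\tau_{s_n}$ with $\tau=\tau_{s_n}$ on $\{\tau<s_n\}$. The candidate is $\tilde\tau_t:=\limsup_{n}\tau_{s_n}$, which is $\F_{t-}$-measurable because each $\F_{s_n}\subset\F_{t-}$. On $\{\tau<t\}$ one has $\tau<s_n$ for all $n$ large enough, hence $\tau_{s_n}=\tau$ eventually and $\tilde\tau_t=\tau$ on $\{\tau<t\}$.

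For (b), the direction (b) $\Rightarrow$ Definition \ref{defhonest} is trivial since $\{\tau<t\}\subset\{\tau\leq t\}$. For the converse I would instead take $s_n\downarrow t$ with $s_n>t$, obtain $\tau_{s_n}\in\F_{s_n}$ with $\tau=\tau_{s_n}$ on $\{\tau<s_n\}$, and set $\tilde\tau_t:=\liminf_{n}\tau_{s_n}$. By the usual conditions, $\tilde\tau_t$ is measurable with respect to $\bigcap_n\F_{s_n}=\F_{t+}=\F_t$. On $\{\tau\leq t\}$ we have $\tau<s_n$ for every $n$, so each $\tau_{s_n}$ equals $\tau$ and so does the liminf. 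The only mild care is to ensure the limits are well defined $[0,\infty]$-valued random variables; this is automatic, and choosing the witnesses to satisfy $\tau_{s_n}\leq s_n$ (as allowed by Definition \ref{defhonest}) additionally yields $\tilde\tau_t\leq t$ in both constructions.
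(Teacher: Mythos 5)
Your proof is correct, and the two nontrivial directions rest on the same idea as the paper's proof: approximate $t$ from the left to get an $\F_{t-}$-measurable witness for (a), and from the right, using right-continuity of $\ff$, to get an $\F_t$-measurable witness that also works on $\{\tau=t\}$ for (b). The only real difference is one of packaging: where you take pointwise sequential limits $\limsup_n\tau_{s_n}$ and $\liminf_n\tau_{s_n}$ for each fixed $t$ (and your measurability claims do hold, since a $\liminf$ depends only on the tail of the sequence and hence is measurable with respect to $\bigcap_n\F_{s_n}=\F_{t+}=\F_t$), the paper builds a single increasing adapted process $\alpha^-_t=\sup_{r\in\Q,\,r<t}\tau_r$ and its right-continuous version $\alpha$; that global construction costs nothing extra but delivers the additional representation $\tau=\sup\{t:\alpha_t=t\}$ with $\alpha$ increasing and c\`adl\`ag, which the paper reuses later (notably in the proof of Theorem \ref{jump_honest}(a)).
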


\begin{proof}
Sufficiency of both conditions is straightforward.

Using the notation from Definition \ref{defhonest} we introduce the process $\alpha^-$
as
$\alpha^-_t=\sup_{r\in Q, r<t}\tau_r$.
This definition implies that $\alpha^-$ is an increasing, left-continuous, adapted process such that $\alpha^-_t=\tau$
on $\{\tau<t\}$ thus the necessary condition in (a) is proven.

Let us denote by $\alpha$ the right-continuous version of $\alpha^-$, i.e., $\alpha_t=\alpha^-_{t+}$. Then, $\alpha$ is an increasing, c\`adl\`ag, adapted process such that $\alpha_t=\tau$ on $\{\tau\leq t\}$ and $\tau=\sup\{t : \alpha_t=t\}$ thus the necessary condition in (b) is proved.
\end{proof}

\begin{theorem}
\label{jeulin5,1}
Let $\tau$ be a random time.
Then, the following conditions are equivalent:\\
(a) $\tau$ is an honest time;\\
(b) there exists an optional set $\Gamma$ such that $\tau(\omega)=\sup\{t: (\omega, t)\in \Gamma\}$ on $\{\tau<\infty\}$;\\
(c) $\wt Z_\tau=1$ a.s. on $\{\tau<\infty\}$;\\
(d) $\tau=\sup\{t: \widetilde Z_t=1 \}$ a.s. on $\{\tau<\infty\}$.
\end{theorem}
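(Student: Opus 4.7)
The plan is to prove the four conditions equivalent via the cycle $(a)\Rightarrow(b)\Rightarrow(c)\Rightarrow(d)\Rightarrow(b)\Rightarrow(a)$, where (a)$\Leftrightarrow$(b) is essentially a restatement, (d)$\Rightarrow$(b) is immediate, and the substance lies in the two optional-projection arguments (b)$\Rightarrow$(c) and (c)$\Rightarrow$(d).

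For the equivalence $(a)\Leftrightarrow(b)$: starting from (a), I take the increasing càdlàg adapted process $\alpha$ constructed in the proof of Proposition~\ref{honest_alt}(b), which satisfies $\alpha_t=\tau$ on $\{\tau\leq t\}$ and $\tau=\sup\{t:\alpha_t=t\}$ on $\{\tau<\infty\}$, and put $\Gamma:=\{(\omega,t):\alpha_t(\omega)=t\}$; since $\alpha$ is optional so is $\Gamma$. Conversely, if (b) holds with optional $\Gamma$, then on $\{\tau<s\}$ the truncation $\Gamma\cap([0,s]\times\Omega)$ is $\F_s$-measurable, so $\tau_s:=\sup\{t\leq s:(\omega,t)\in\Gamma\}$ is an $\F_s$-measurable version of $\tau$ on $\{\tau<s\}$, giving (a). The implication $(d)\Rightarrow(b)$ is free: the set $\Gamma:=\{\widetilde Z=1\}$ is optional because $\widetilde Z$ is an optional process, and (d) is exactly the assertion that its end equals $\tau$.

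For $(b)\Rightarrow(c)$: from $\Gamma\subset\Rbrack 0,\tau\Rbrack$ I apply monotonicity of the optional projection to the inequality $\id_\Gamma\leq \id_{\Rbrack 0,\tau\Rbrack}$; since $\Gamma$ is optional the left-hand side is its own projection, giving $\id_\Gamma\leq\widetilde Z$ up to evanescence. Together with $\widetilde Z\leq 1$, this forces $\widetilde Z=1$ on $\Gamma$. To pass from $\Gamma$ to the end $\tau$ of $\Gamma$ I argue by contradiction using the optional section theorem: the set $\{\widetilde Z<1\}\cap\graph{\tau}$ is optional, so if it were non-evanescent an optional section would produce an $\ff$-stopping time $T\leq\tau$ with $T=\tau$ on a set of positive probability and $\widetilde Z_T<1$; invoking the identity $\E[\widetilde Z_T\id_{\{T<\infty\}}]=\P(\tau\geq T<\infty)$ (definition of $\widetilde Z$ at a stopping time), together with the fact that $T=\tau$ on the sectioned set and $\tau\in\Gamma$ or $\tau=\sup\Gamma$, yields the contradiction $\widetilde Z_T=1$.

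For $(c)\Rightarrow(d)$: clearly $\tau\leq L:=\sup\{t:\widetilde Z_t=1\}$ on $\{\tau<\infty\}$ by (c). For the reverse inequality, the key observation is that for every $t\geq 0$ the $\F_t$-measurable event $B_t:=\{\widetilde Z_t=1\}$ satisfies
\[
\E\bigl[\id_{B_t}(1-\id_{\{\tau\geq t\}})\bigr]
=\E\bigl[\id_{B_t}(1-\widetilde Z_t)\bigr]=0,
\]
so $B_t\subset\{\tau\geq t\}$ a.s.; running this over a dense countable set and using right-continuity of $\widetilde Z$ together with a standard exhaustion of $\{\widetilde Z=1\}$ by stopping times (again via the optional section theorem) yields $L\leq\tau$.

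The main obstacle is the passage, inside $(b)\Rightarrow(c)$, from ``$\widetilde Z=1$ on $\Gamma$'' to ``$\widetilde Z_\tau=1$'' at the end of $\Gamma$. The subtlety is that $\tau$ need not belong to $\Gamma$ and $\widetilde Z$, being only càdlàg, need not be left-continuous at $\tau$; a naive pointwise limit through an approximating sequence $t_n\uparrow\tau$ in $\Gamma$ does not close the gap. This is why the argument has to be made at the level of optional sets via the section theorem rather than pathwise, and the same technology handles the missing direction in $(c)\Rightarrow(d)$.
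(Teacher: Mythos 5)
First, a remark on the comparison you asked for: the paper does not prove this theorem at all — it is stated in Appendix B purely as a recalled result, with a pointer to Jeulin's Lemme (5,1). So your attempt is measured against the standard argument in the literature rather than against anything in the text. Your overall architecture (reducing everything to the optional set $\Gamma$ and the two projection steps) is the right one, and your $(a)\Leftrightarrow(b)$ and $(d)\Rightarrow(b)$ are fine.

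The genuine gap is in the crux of $(b)\Rightarrow(c)$. You apply the optional section theorem to $\{\wt Z<1\}\cap\graph{\tau}$, but this set is not optional: $\graph{\tau}$ is optional only in very special cases (if it were, its d\'ebut would be a stopping time equal to $\tau$). Moreover, even granting a stopping time $T$ with $\P(T=\tau<\infty)>0$, the conclusion $\wt Z_T=1$ does not follow: $\wt Z_T=\P(\tau\geq T\,|\,\F_T)$ is a conditional probability and $\{T=\tau\}$ is not $\F_T$-measurable, so one can perfectly well have $\P(T=\tau)>0$ and $\wt Z_T<1$ (take $\tau$ uniform on $\{1/4,3/4\}$ independent of a trivial filtration and $T=3/4$). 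The identity $\E[\wt Z_T\id_{\{T<\infty\}}]=\P(\tau\geq T<\infty)$ is an identity in expectation and cannot be localized to that event. The correct and much shorter route is to replace $\Gamma$ by its closure $\bar\Gamma$, which is still optional, has the same end, satisfies $\bar\Gamma\subset\{(\omega,t):t\leq\tau(\omega)\}$, and — being closed — contains $\graph{\tau}$ on $\{\tau<\infty\}$. Then $\id_{\bar\Gamma}={}^o[\id_{\bar\Gamma}]\leq{}^o[\id_{\Rbrack0,\tau\Lbrack}]=\wt Z$ up to evanescence (note it is $\Rbrack0,\tau\Lbrack=\{t\leq\tau\}$, not $\Rbrack0,\tau\Rbrack$, whose projection is $\wt Z$ in the paper's conventions), and evaluating this everywhere-valid inequality at $(\omega,\tau(\omega))$ gives $\wt Z_\tau\geq\id_{\bar\Gamma}(\tau)=1$ directly, with no section argument. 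Passing to the closure is exactly what resolves the difficulty you correctly diagnosed ($\tau$ need not lie in $\Gamma$), but your workaround for it does not work.

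The reverse inequality in $(c)\Rightarrow(d)$ is also not closed as written. The fixed-$t$ computation $\{\wt Z_t=1\}\subset\{\tau\geq t\}$ a.s. is correct, but running it over a countable dense set does not control irrational $t$ with $\wt Z_t=1$ (right-continuity of $\wt Z$ gives $\wt Z_q\to\wt Z_t$ as $q\downarrow t$, not $\wt Z_q=1$), and "a standard exhaustion of $\{\wt Z=1\}$ by stopping times" is not standard: the optional section theorem does not exhaust an optional set, it only produces, for each $\varepsilon>0$, a stopping time with graph in the set covering all but $\varepsilon$ of its projection. A correct version of your idea: for rational $q$ apply the section theorem to the optional set $D^q:=\{\wt Z=1\}\cap\Rbrack q,\infty\Rbrack$ wait, rather $\{(\omega,t):t\geq q,\ \wt Z_t(\omega)=1\}$; any sectioning stopping time $T$ satisfies $\wt Z_T=1$, hence $\P(\tau<T<\infty)=\E[(1-\wt Z_T)\id_{\{T<\infty\}}]=0$, so $\tau\geq T\geq q$ on $\{T<\infty\}$; letting $\varepsilon\to0$ shows that a.s. on $\{\tau<q\}$ the path of $\wt Z$ never touches $1$ after $q$, and the union over $q$ gives $L\leq\tau$. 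As it stands, both of the substantive implications need repair.
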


\bibliographystyle{ACM} 
\bibliography{biblio}

\def\cprime{$'$}
\begin{thebibliography}{10}

\bibitem{ACDJ_pekin}
{\sc Aksamit, A., Choulli, T., Deng, J., and Jeanblanc, M.}
\newblock Arbitrages in a progressive enlargement setting.
\newblock {\em Arbitrage, Credit and Informational Risks, Peking University
  Series in Mathematics 6\/} (2014), 55--88.

\bibitem{ACDJ_after}
{\sc Aksamit, A., Choulli, T., Deng, J., and Jeanblanc, M.}
\newblock No-{A}rbitrage under a {C}lass of {H}onest {T}imes.
\newblock {\em Finance and Stochastics 22}, 1 (2018), 127--159.

\bibitem{barlow78}
{\sc Barlow, M.}
\newblock Study of a filtration expanded to include an honest time.
\newblock {\em Zeitschrift f{\"u}r Wahrscheinlichkeitstheorie und Verwandte
  Gebiete 44}, 4 (1978), 307--323.

\bibitem{bjr:intro}
{\sc Bielecki, T.~R., Jeanblanc, M., and Rutkowski, M.}
\newblock Alternative approaches to credit risk modelling.
\newblock {\em Mod\`eles al\'eatoires en finance math\'ematiques, Cours de
  Cimpa, Marrakech Maroc\/} (2009), 67--159.

\bibitem{BY}
{\sc Br{\'e}maud, P., and Yor, M.}
\newblock Changes of filtrations and of probability measures.
\newblock {\em Zeitschrift f{\"u}r Wahrscheinlichkeitstheorie und verwandte
  Gebiete 45}, 4 (1978), 269--295.

\bibitem{delia}
{\sc Coculescu, D.}
\newblock From the decompositions of a stopping time to risk premium
  decompositions.
\newblock {\em Preprint arXiv:0912.4312\/} (2009).

\bibitem{arlequine}
{\sc Dellacherie, C., and Meyer, P.-A.}
\newblock A propos du travail de {Y}or sur le grossissement des tribus.
\newblock In {\em S{\'e}minaire de Probabilit{\'e}s XII}. Springer, 1978,
  pp.~70--77.

\bibitem{dell58}
{\sc Dellacherie, C., and Meyer, P.-A.}
\newblock {\em Probabilit{\'e}s et potentiel: {C}hapitres 5 {\`a} 8.
  {T}h\'eorie des martingales}.
\newblock Hermann, 1980.

\bibitem{elliott}
{\sc Elliott, R.~J., Jeanblanc, M., and Yor, M.}
\newblock On models of default risk.
\newblock {\em Mathematical Finance 10}, 2 (2000), 179--195.

\bibitem{hannig2003filtrations}
{\sc Hannig, J.}
\newblock On filtrations related to purely discontinuous martingales.
\newblock In {\em S{\'e}minaire de Probabilit{\'e}s XXXVI}. Springer, 2003,
  pp.~360--365.

\bibitem{chinois}
{\sc He, S.-w., Wang, J.-g., and Yan, J.-a.}
\newblock {\em Semimartingale theory and stochastic calculus}.
\newblock Beijing: Science Press. Boca Raton, FL: CRC Press Inc., 1992.

\bibitem{jacod1985grossissement}
{\sc Jacod, J.}
\newblock Grossissement initial, hypoth{\`e}se {$(\mathcal H^\prime)$} et
  th{\'e}or{\`e}me de {G}irsanov.
\newblock In {\em Grossissements de filtrations: exemples et applications}.
  Springer, 1985, pp.~15--35.

\bibitem{JSlimit}
{\sc Jacod, J., and Shiryaev, A.~N.}
\newblock {\em Limit theorems for stochastic processes}, vol.~288.
\newblock Springer-Verlag Berlin, 2003.

\bibitem{jac}
{\sc Jacod, J., and Skorohod, A.~V.}
\newblock Jumping filtrations and martingales with finite variation.
\newblock In {\em S{\'e}minaire de Probabilit{\'e}s XXVIII}. Springer, 1994,
  pp.~21--35.

\bibitem{jeanblanc2009progressive}
{\sc Jeanblanc, M., and {Le Cam}, Y.}
\newblock Progressive enlargement of filtrations with initial times.
\newblock {\em Stochastic Processes and their Applications 119}, 8 (2009),
  2523--2543.

\bibitem{3M}
{\sc Jeanblanc, M., Yor, M., and Chesney, M.}
\newblock {\em Mathematical methods for financial markets}.
\newblock Springer, 2009.

\bibitem{j}
{\sc Jeulin, T.}
\newblock {\em Semi-martingales et grossissement d'une filtration}, vol.~833 of
  {\em Lecture Notes in Mathematics}.
\newblock Springer, 1980.

\bibitem{jl:ms}
{\sc Jiao, Y., and Li, S.}
\newblock Modeling sovereign risks: From a hybrid model to the generalized
  density approach.
\newblock {\em Mathematical Finance\/} (2016).

\bibitem{Khonest}
{\sc Kardaras, C.}
\newblock On the characterisation of honest times that avoid all stopping
  times.
\newblock {\em Stochastic Processes and their applications 124\/} (2014),
  373--384.

\bibitem{mansuyyor}
{\sc Mansuy, R., and Yor, M.}
\newblock {\em Random times and enlargements of filtrations in a Brownian
  setting}.
\newblock No.~1873 in Lecture notes in Mathematics. Springer, 2006.

\bibitem{meyer1978theoreme}
{\sc Meyer, P.-A.}
\newblock Sur un th{\'e}or\`eme de {J}. {J}acod.
\newblock In {\em S{\'e}minaire de Probabilit{\'e}s XII}. Springer, 1978,
  pp.~57--60.

\bibitem{ni:eg}
{\sc Nikeghbali, A.}
\newblock An essay on the general theory of stochastic processes.
\newblock {\em Probability Surveys 3\/} (2006), 345--412.

\bibitem{NY1}
{\sc Nikeghbali, A., and Yor, M.}
\newblock A definition and some characteristic properties of pseudo-stopping
  times.
\newblock {\em The Annals of Probability\/} (2005), 1804--1824.

\bibitem{protter}
{\sc Protter, P.}
\newblock {\em Stochastic Integration and Differential Equations}, 2.1~ed.,
  vol.~21 of {\em Stochastic Modelling and Applied Probability}.
\newblock Springer Verlag, 2004.

\bibitem{revuz}
{\sc Revuz, D., and Yor, M.}
\newblock {\em Continuous martingales and Brownian motion}, 3~ed.
\newblock Springer, 1999.

\bibitem{song:th}
{\sc Song, S.}
\newblock Grossissement de filtration et probl\`emes connexes.
\newblock {\em PhD thesis, Universit\'e Paris VI\/} (1987).

\bibitem{williams02}
{\sc Williams, D.}
\newblock A "non-stopping" time with the optional-stopping property.
\newblock {\em Bulletin of the London Mathematical Society 34}, 5 (2002),
  610--612.

\bibitem{yoeurp}
{\sc Yoeurp, C.}
\newblock Th{\'e}or{\`e}me de {G}irsanov g{\'e}n{\'e}ralis{\'e} et
  grossissement d'une filtration.
\newblock In {\em Grossissements de filtrations: exemples et applications}.
  Springer, 1985, pp.~172--196.

\end{thebibliography}

\end{document}